\numberwithin{equation}{section} 
\theoremstyle{plain} 
\newtheorem{theodefi}{Theorem Definition}[section]
\newtheorem{lem}[theodefi]{Lemma}
\newtheorem{rem}[theodefi]{Remark}
\newtheorem{prop}[theodefi]{Proposition}
\newtheorem{fact}[theodefi]{Fact}
\newtheorem{cor}[theodefi]{Corollary}
\newtheorem{nota}[theodefi]{Notation}
\newcommand{\po}{{\textbf{Poin}}}
\newcommand{\cro}{{\textbf{Cr}}}
\newcommand{\cri}{{\textbf{CRI}}}
\newcommand{\D}{{ \mathcal D}}
\newcommand{\eg}{{{\ell_1} }}
\newcommand{\ed}{{{\ell_2} }}
\newcommand{\Q}{{\mathbb Q}}
\newcommand{\R}{{\mathbb R}}
\newcommand{\Ce}{{\mathcal S}^1}
\newcommand{\N}{{\mathbb N}}
\newcommand{\Z}{{\mathbb Z}}
\newcommand{\thechapterwords}
{ \ifcase \thechapter\or 1\or 2\or 3\or 4\or 5\or
	6\or 7\or 8\or 9\or 10\or 11\fi}
\def\thickhrulefill{\leavevmode \leaders \hrule height 2ex \hfill \kern \z@}
\def\@makechapterhead#1{%
	\vspace*{15\p@}%
	{\parindent \z@ \centering \reset@font
		\thickhrulefill\quad
		\scshape  {\chapnumfont \@chapapp{}}{\chapnumfont \thechapterwords}
		\quad \thickhrulefill
		\par\nobreak
		\vspace*{15\p@}%
		\interlinepenalty\@M
		\hrule
		\vspace*{15\p@}%
		\huge {\bfseries  #1}\par\nobreak
		\par
		\vspace*{15\p@}%
		\hrule
		\vskip 15\p@
}}
\def\@makeschapterhead#1{%
	\vspace*{15\p@}%
	{\parindent \z@ \centering \reset@font
		\thickhrulefill
		\par\nobreak
		\vspace*{15\p@}%
		\interlinepenalty\@M
		\hrule
		\vspace*{15\p@}%
		\huge \bfseries #1\par\nobreak
		\par
		\vspace*{15\p@}%
		\hrule
		\vskip 30\p@
}}
\DeclareFixedFont{\chapnumfont}{T1}{phv}{b}{n}{20pt}
\DeclareFixedFont{\chapchapfont}{T1}{phv}{b}{n}{16pt}
\DeclareFixedFont{\chaptitfont}{T1}{phv}{b}{n}{24.88pt}
\def\@makechapterhead#1{%
	\vspace*{15\p@}%
	{\parindent \z@ \centering \reset@font
		\thickhrulefill\quad
		\scshape {\chaptitfont\color[rgb]{0.00,0.50,1.00}\@chapapp{}}
		{\chapnumfont \thechapterwords}
		\quad \thickhrulefill
		\par\nobreak
		\vspace*{15\p@}%
		\interlinepenalty\@M
		\hrule
		\vspace*{15\p@}%
		{\Large\bfseries #1}\par\nobreak
		\par
		\vspace*{15\p@}%
		\hrule
		\vskip 30\p@
}}%
\begin{document}
	\selectlanguage{english}
	
	\title{\textbf{Cherry Maps with Different Critical Exponents: Bifurcation of Geometry}}
	\author{ \large{TANGUE NDAWA Bertuel }\\	\normalsize{bertuelt@yahoo.fr}\\
		\normalsize{Institute of Mathematics  and Physical Sciences}\\\normalsize{National Advanced School of Engineering}\vspace{0.5cm}}
	\maketitle
	
	\selectlanguage{english}
	
	To the TIEMGNI DEFFO's family: the mother Carine and the children 
	Abigail, Joakim, and Helza after the death of Richard the father of the family.
\section*{Abstract}

We consider order preserving $C^3$ circle maps with a flat piece, irrational rotation number and critical exponents $(\ell_1, \ell_2)$.

We detect a change in the geometry of the system. For $(\ell_1, \ell_2)\in[1,2]^2$ the geometry is degenerate and it becomes bounded for  $(\ell_1,\ell_2)\in [2, \infty)^2\setminus \{(2,2)\} $. When  the rotation number is of  the form	 $[abab\cdots]$; for some $a,b\in\N^*$, the geometry is  bounded for  $(\ell_1, \ell_2)$  belonging above a curve defined on
$ ]1, +\infty [^2$. As a consequence we estimate the Hausdorff dimension of the non-wandering set $K_f=\Ce\setminus \bigcup _{i=0}^{\infty} f^{-i}(U)$. Precisely, the Hausdorff dimension of this set is
equal to zero when the geometry is degenerate and it is strictly positive when the geometry is bounded.

\paragraph{}

	\underline{Key words}: Circle map, Irrational rotation number, Flat piece, Critical exponent, Geometry and Hausdorff Dimension.
	
\underline{MSC-2010}: 37E10.

\underline{Acknowledgements}:
The author was partially supported by the Centre d'Excellence Africain en Science Math\'{e}matique set Applications (CEA-SMA).

I would sincerely thank Prof. M. Martens and Prof. Dr.  L. Palmisano for introducing me to the subject of this paper, his valuable advice, continuous encouragement and helpful discussions. I thank  Prof. Dr.  Carlos Ogouyandjou for having participated presentations related to this work.

	\section{Introduction}
We study a certain class of weakly order preserving, non-injective (on an interval exactly; called flat piece) circle maps which appear naturally in the study of Cherry flows on the two dimensions torus (see \cite{8, 11, 14, 15}), non-invertible continuous circle map (see \cite{7}) and of the dependence of the rotation interval on the parameter value for one-parameter families of continuous  circle maps (see \cite{16}). The dynamics of circle maps with a flat interval has been intensively explored in the past years, see \cite{4, 5,  7, 14, 17, 18}.

We discuss the geometry of the non-wandering set (set obtained by removing from the circle all pre-images of the flat piece). Where the geometry is concerned, we discover a dichotomy; which generalize the one found in \cite{5}. Some of our maps show a
"degenerate geometry", while others seem to be subject to the "bounded geometry".

Before we can explain more precisely our results, we introduce our class, adopt some notations and present basic lemmas.

\subsection{The class of functions}
We fix $\ell_1, \ell_2\geq1$ and we consider the class $\mathscr{L} $ of continuous circle maps 
$f$ of degree one for which an arc $U$ exists so that the following properties hold:
\begin{enumerate}
	\item The image of $U$ is one point.
	\item The restriction of $f$ to $\Ce\setminus\overline{U} $ is a $C^3$-diffeomorphism onto its image.
	\item Let $(a,b)$ be a preimage of $U$ under the projection of the real line to $\Ce$. On some right-sided neighborhood of $b$, $f$ can be represented as
	\begin{equation*}
	h_r((x-b)^{\ell_2});
	\end{equation*}
	where, $h_r$ is $C^3$-diffeomorphism  on a two-sided neighbourhood of $b$.
	Analogously, on a left-sided neighborhood of $a$, $f$ equals
	\begin{equation*}
	h_l((x-a)^{\ell_1});
	\end{equation*}
	In the following, we assume that $h_l(x)=h_r(x)=x$. In fact, it is possible to effect $C^3$ coordinate changes near $a$ and $b$ that will allow us to replace both  $h_l$ and $h_r$ by the identity function.
\end{enumerate}
Let $F$ be a lift of $f$ 
on the real line. The rotation number $\rho (f)$ of $f$ is  defined (independently of $x$ and $F$)  by 
\begin{equation*}
\rho (f):=\lim_{n\rightarrow\infty }\dfrac{F^n(x)-x}{n}(mod\:1).
\end{equation*}
Let  $(q_n)$ be the sequence of denominators of the convergents of $\rho (f)$ (irrational) defined recursively by $q_1=1$, $q_2=a_1$ and $q_{n+1}=a_nq_{n}+q_{n-1} $ for all $n\geq 3$; with, 
\begin{equation*}\label{i1}
\rho (f)=[a_0a_1\cdots ]:=a_0+\dfrac{1}{a_1+\dfrac{1}{a_2+\dfrac{1}{ \ddots}}}
\end{equation*} 

\subsubsection*{Additional Assumption} Let $f\in\mathscr{L} $. 	We say that $Sf$ (the Schwarzian derivative  of $f$) is negative if,
\begin{equation}\label{add assumpt 1}
Sf(x):= \dfrac{D^3f(x)}{Df(x)}-\frac{3}{2}\left(\dfrac{D^2f(x)}{Df(x)}\right)^2<0;\quad\forall\:x,\; Df(x)\neq 0;\tag{A1}
\end{equation}
With, $D^nf$ the $n^{th} $ derivative of $f$; for $n\in\N$. 

We will assume in the  proof of the first part	of Theorem 1 that $f$ has a negative negative Schwarzian derivative. 

\subsection{Notations and Definitions}
The fundamental notations are established in \cite{5} (p.2-3).	
Let $f\in\mathscr{L} $.
\begin{enumerate}
	\item For every $i\in\Z$, the writing $\underline{i} $ means $f^i(U)$.
	\item Let $I$ and $J$ be two intervals. $(I,J)$ is the interval between
	$I$ and $J$. $[I,J):=I\cup(I,J)$ and $(I,J]:=(I,J)\cup J$.
	$|I|$ is the length of the interval $I$, $|[I,J)|:=|I|+|(I,J)|$ and $|(I,J]|:=|J|+|(I,J)|$. We say that $I$ and $J$ are comparable when $|I|$ and $|J|$ are comparable. That means, there is $k>0$ such that, $\dfrac{1}{ k}|I|<|J|<k|I|$.
	\item 
	For any sequence $\Gamma_n$  and for any  real $d$, we adopt the writings:
	\begin{equation*}
	\Gamma_n^{d(\ell_1,\ell_2)}:=\begin{cases}\begin{array}{lcl}
	\Gamma_n^{d\ell_1} & \mbox{if}& n\equiv 0[2]\\
	\Gamma_n^{d\ell_2} &\mbox{if} & n\equiv 1[2]
	\end{array}
	\end{cases}  \Gamma_n^{d(\frac{1}{\ell_1},\frac{1}{\ell_2})}:=\begin{cases}\begin{array}{lcl}
	\Gamma_n^{d\frac{1}{\ell_1}} & \mbox{if}& n\equiv 0[2]\\
	\Gamma_n^{d\frac{1}{\ell_2}} &\mbox{if} & n\equiv 1[2]
	\end{array}
	\end{cases}
	\end{equation*}	
	
\end{enumerate}	

\subsection{ Discussion and statement of the results}
\subsubsection*{Scaling ratios}
The sequence
\begin{equation*}
\alpha_n:=\dfrac{|(\underline{-q_n}, \underline{0})|}{|[\underline{-q_n}
	, \underline{0})|}=\dfrac{|(f^{-q_n}(U), U)|}{|(f^{-q_n}(U), U)|+|f^{-q_n}(U)|}.
\end{equation*}
measure the geometry near a critical point. In fact, it serves as scaling relating the geometries of successive dynamic partitions. 

The geometry is said degenerate  when $\alpha_n$ goes to zero
and the geometry is bounded, when $\alpha_n$ is bounded away from zero.

The study of this geometry is parametrised by rotation number and critical exponents. In \cite{5}, for pairs  $(\ell, \ell)$; $ \ell>1$ and $\rho$ irrational number of bounded type (i.e. $\max_na_n<\infty$), the authors found a transition between degenerate geometry and bounded
geometry. In fact, they show under \textbf{(\ref{add assumpt 1})} that, if $ 1<\ell\leq2$ and $\rho\in\R\setminus\Q$, then the  geometry is degenerate and it is bounded (independently of \textbf{(\ref{add assumpt 1})}) if $ \ell>2$ and  $\rho$ is irrational number of bounded type. In \cite{4}, for $ \ell>1$, the author proved that the class of function $f$ of critical exponents $(1, \ell)$ or $(\ell, 1)$ have a degenerate geometry. In \cite{15}, the authors show that, for the maps in $\mathscr{L} $ with Fibonacci rotation number,  when the critical exponents $(\ell_1,\ell_2)$ belong in $(1,2)^2$, the geometry is degenerate. Let us note that, 
Differently from other previous works, information on the geometry of the system is obtained by the study of the asymptotic behaviour of the renormalization operator.

In the present paper, we consider the cases where the critical exponents  $(\ell_1,\ell_2)$ belong in a subdomain (containing the previous domains) of $[1,\infty)^2$; also, the rotation number is not necessarily Fibonacci type and the results do not depend on the renormalization operator as in \cite{9}.
We use the formalism presented in \cite{5} which is based on recursive inequalities analysis of $\alpha_{n} $. For technical reason, in the case of bounded geometry, we introduce the vector sequence $v_n:=(-\ln \alpha_{n}, -\ln \alpha_{n-1})$ and, the new recursive inequality is controlled by a $2\times2$ matrix. When the rotation number is bi-periodic ($\rho=[abab\cdots]; a,b\in\N$), the  $2\times2$ matrix has two eigenvalues (depending on rotation number and critical exponents) $\lambda_s\in(0,1)$ and $\lambda_u>0$. The equation $\lambda_u=\lambda_u((a,b); (\ell_1,\ell_2))=1$ defines a curve $\mathcal{C}_{\lambda_u=1}$ (presented above) which 
separates the $(\ell_1,\ell_2)$  plan into two components ${C}_{\lambda_u>1}$ (below the curve) and ${C}_{\lambda_u<1}$.

\begin{center}
	\setlength{\unitlength}{5cm}
	\begin{picture}(1, 1)
	\put(0,0){\line(0,1){1}}
	\put(0,0){\line(1,0){1}}
	\qbezier(0.1,1)(0.2,0.2)(1,0.1)
	\put(0.373,0){\line(0,1){0.373}}
	\put(0,0.373){\line(1,0){0.373}}
	\put(0.36,-0.057){2}
	\put(-0.038,0.36){2}
	\put(0,-0.057){1}
	\put(-0.038,0){1} 
	\put(0.11,0.9){$\leftarrow$} 
	\put(0.6,0.5){${C}_{\lambda_u<1}$}
	\put(0.169,0.9){$\mathcal{C}_{\lambda_u=1}$} 
	\end{picture}
\end{center}

	\paragraph{Main result}
	Let  $f\in \mathscr{L} $ with critical exponents $(\ell_1,\ell_2)$. Then,
	\begin{enumerate}	
		\item  the scaling ratio $\alpha_n$ goes to zero  when $(\ell_1,\ell_2)\in [1,2]^2$, $\rho\in\R\setminus \Q$ and \textbf{(\ref{add assumpt 1})} holds.
		\item  the scaling ratio $\alpha_n$ bounded away from zero  when $(\ell_1,\ell_2)\in [2, \infty)^2\setminus \{(2,2)\} $ and $\rho$ is bounded type. \item   the scaling ratio $\alpha_n$ bounded away from zero  when $(\ell_1, \ell_2)\in {C}_{\lambda_u<1}$ and $\rho$ is bi-periodic. 
	\end{enumerate}

\paragraph{Estimation of Hausdorff Dimension of the non-wandering set.}

In the symmetric case $(\ell_1=\ell_2=\ell)$, in  \cite{8},  for $f\in \mathscr{L} $ with bounded type rotation number,  the author  shows that, if   $\ell\in(1,2]$, then the Hausdorff dimension of the
non-wandering set is equal to zero and   that, 
if   $\ell>2$, then the Hausdorff dimension of the non-wandering set is strictly greater than zero. This result generalizes the one in \cite{19} where the author treats the maps in $\mathscr{L} $ with critical exponents $(1,1)$.  Let us note that, the results in  \cite{8} are more general (they only depend on geometry); precisely, if the rotation number is the bounded type, then, the Hausdorff Dimension of the non-wandering set is equal to zero when the geometry is degenerate and it is strictly greater than zero when the geometry is bounded; so we have the following result which is proved at the end of the paper. 
\begin{cor}
	Let  $f\in \mathscr{L} $ with critical exponent $(\ell_1,\ell_2)$ with bounded type rotation number. Then,
	\begin{enumerate}	
		\item   the Hausdorff dimension of the
		non-wandering set is equal to zero when $(\ell_1,\ell_2)\in [1,2]^2$ and \textbf{(\ref{add assumpt 1})} holds and for the pairs $(\ell, 1)$; $\ell>1$;
		\item the Hausdorff dimension is strictly bigger than zero  when $(\ell_1,\ell_2)\in [2, \infty)^2\setminus \{(2,2)\} $;
		\item  the Hausdorff dimension is strictly bigger than zero when $(\ell_1, \ell_2)\in {C}_{\lambda_u<1}$  and $\rho$ is bi-periodic.
	\end{enumerate}
\end{cor}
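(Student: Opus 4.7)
\begin{prev}[Proof sketch]
The plan is to reduce the Corollary entirely to the Main Result together with the general principle, recorded in \cite{8}, that for maps in $\mathscr{L}$ with bounded-type rotation number the Hausdorff dimension of the non-wandering set $K_f$ depends only on whether the geometry is degenerate or bounded. Concretely, the result of \cite{8} asserts: if $\rho(f)$ is of bounded type and $\alpha_n\to 0$, then $\dim_H(K_f)=0$, whereas if $\rho(f)$ is of bounded type and $\alpha_n$ is bounded away from zero, then $\dim_H(K_f)>0$. Since every bi-periodic $\rho=[abab\cdots]$ is of bounded type, this dichotomy applies in all three situations of the statement.

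First I would handle item (1). For $(\ell_1,\ell_2)\in[1,2]^2$ and $\rho\in\R\setminus\Q$ under assumption \textbf{(\ref{add assumpt 1})}, part 1 of the Main Result gives $\alpha_n\to 0$. Combined with the hypothesis that $\rho$ is of bounded type (so the \cite{8} criterion applies), this yields $\dim_H(K_f)=0$. For the remaining pairs $(\ell,1)$ with $\ell>1$, the degenerate-geometry conclusion $\alpha_n\to 0$ is already established in \cite{4}, so the same application of \cite{8} gives $\dim_H(K_f)=0$.

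Next, for items (2) and (3), I would invoke parts 2 and 3 of the Main Result to conclude that $\alpha_n$ is bounded away from zero: in case (2) this holds for $(\ell_1,\ell_2)\in[2,\infty)^2\setminus\{(2,2)\}$ and $\rho$ of bounded type, and in case (3) for $(\ell_1,\ell_2)\in C_{\lambda_u<1}$ and $\rho=[abab\cdots]$ bi-periodic (hence automatically of bounded type). In each of these cases the bounded-geometry half of the \cite{8} criterion yields $\dim_H(K_f)>0$.

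The essentially non-trivial content of the Corollary is therefore concentrated in the Main Result; the passage from the scaling ratios $\alpha_n$ to the Hausdorff dimension of $K_f$ is a direct citation. The only point that requires a brief verification is that the bi-periodic case fits the hypotheses of \cite{8}: since bi-periodic rotation numbers have $\max_n a_n=\max(a,b)<\infty$, they are bounded type and the dimension-from-geometry statement applies without modification. Thus the expected main obstacle---relating the scaling geometry to a genuine dimensional estimate for the Cantor-like set $K_f$---has already been surmounted in \cite{8}, and the Corollary is obtained by case-by-case substitution of the Main Result into that framework.
\end{prev}
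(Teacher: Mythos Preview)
Your high-level reduction is correct and matches the paper's strategy: invoke the Main Result to determine whether the geometry is degenerate or bounded, then apply the dichotomy from \cite{8}. However, you treat the passage from the scaling ratios $\alpha_n$ to the Hausdorff-dimension conclusion as a ``direct citation,'' and this is where your sketch misses a step that the paper carries out explicitly.

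The arguments in \cite{8} (Theorems~1.4 and~1.5) do not literally run on the single sequence $\alpha_n$; they use the finer ratios
\[
w_n(i)=\frac{|(\underline{-q_n+(i+1)q_{n-1}},\,\underline{-q_n+iq_{n-1}})|}{|\underline{-q_n+iq_{n-1}}|},\qquad i=0,\dots,a_{n-1}-1,
\]
which control the relative sizes of all the gaps in the dynamical partition $\mathcal{P}^n$. In the symmetric setting of \cite{8} these are tied to $\alpha_{n-1}$ by the original arguments there, but for asymmetric exponents $(\ell_1,\ell_2)$ one must first check that the same comparability survives. The paper does this in a separate lemma (proved via the Koebe principle applied to $f^{q_n-(i-1)q_{n-1}}$ on suitable intervals $M\subset T$), showing that $w_n(i)$ for $i\ge 1$, and $w_n(0)^{\ell_1,\ell_2}$ for $i=0$, are uniformly comparable to $\alpha_{n-1}$. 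Only after this lemma does the paper say ``the rest of the proof is as in \cite{8}.''

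So the gap in your proposal is concrete: you need to verify that the bounded/degenerate behaviour of $\alpha_n$ propagates to the full family of partition ratios $w_n(i)$ that the \cite{8} dimension estimates actually consume. This is not automatic in the asymmetric case (note the $(\ell_1,\ell_2)$-power appearing at $i=0$, where the interval abuts the flat piece), and it is precisely the content of the lemma the paper supplies before invoking \cite{8}.
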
	
The following remark will simplify statements and  proofs  of  results.	
\begin{rem}\label{critical exponent and  renormalization}
	Let us note that, our setting has some inherent symmetry. This will simplify statements and proof of our results. 
\end{rem}

\section{Tools}

\subsection{Cross-Ratio Inequalities }		
\begin{nota}	
	We denote by $\R_<^4$, the subset of  $\R^4$ defined by		
	\begin{equation*}
	\R_<^4:= \{ (x_1,x_2,x_3,x_4)\in\R^4,\; such\; that\;x_1<x_2<x_3<x_4 \}.
	\end{equation*}	
\end{nota}
The   following result can be found in \cite{1} (\textbf{Theorem 2}) 
\begin{prop}\label{CRI}
	The Cross-Ratio Inequality (\cri).  
	
	Let  $f\in \mathscr{L} $. Let $(a,b,c,d)\in \R_<^4$. 
	The   cross-ratio \cro\, is defined by 		
	\begin{equation*}
	\cro\:(a,b,c,d):=\dfrac{|b-a||d-c|}{|c-a||d-b|}
	\end{equation*}
	and the  cross-ratio \po\, is defined by
	\begin{equation*}
	\po\:(a,b,c,d):=\dfrac{|d-a||b-c|}{|c-a||d-b|}.
	\end{equation*}

	The distortion of the cross-ratio \cro\, and  cross-ratio \po\,  are given respectively by
	\begin{equation*}
	\D\cro\:(a,b,c,d):=\dfrac{\cro\:(f(a),f(b),f(c),f(d))}{\cro\:(a,b,c,d)}
	\end{equation*} 
	and
	\begin{equation*}
	\D\po\:(a,b,c,d):=\dfrac{\po\:(f(a),f(b),f(c),f(d))}{\po\:(a,b,c,d)}.
	\end{equation*}

	Let us consider a set of $n+1$ quadruples $\{a_i, b_i, c_i, d_i \} $ with the following properties:
	
	\begin{enumerate}
		\item Each point of the circle belongs to at most $k$ intervals $(a_i, d_i)$;
		\item The intervals $(b_i, c_i)$ do not intersect $U$.\label{hypothesis 2 of cri}
	\end{enumerate}
	Then there is $C_k,C'_k>0$ such that the following inequalities hold\,
	\begin{equation*}
	\prod_{i=0}^{n}\D\cro\:(a_i, b_i, c_i, d_i)\leq C_k
	\end{equation*} 
	and 
	\begin{equation*}
	\prod_{i=0}^{n}\D\po\:(a_i, b_i, c_i, d_i)\geq C'_k.
	\end{equation*} 
\end{prop}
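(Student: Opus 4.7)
The plan is to follow the classical strategy for cross-ratio distortion bounds for smooth interval and circle maps (as in de Melo--van Strien), adapted to our class where $f$ has a flat piece with the explicit power-law normal form at its endpoints. First I would fix a single quadruple $(a_i, b_i, c_i, d_i)$ and rewrite $\log \D\cro(a_i,b_i,c_i,d_i)$ and $\log \D\po(a_i,b_i,c_i,d_i)$ as an integral over $(a_i, d_i)$ of a density depending on the non-linearity $Nf = D^2 f / Df$ (equivalently, on the Schwarzian $Sf$) of $f$ against a non-negative kernel whose total mass is comparable to $|(a_i,d_i)|$. When $(a_i, d_i) \subset \Ce \setminus \overline{U}$, $f$ is a $C^3$-diffeomorphism and $Nf$ is bounded, so the per-quadruple contribution is at most a constant times $|(a_i, d_i)|$, and the sign of the integrand yields an upper bound for $\log \D\cro$ and a lower bound for $\log \D\po$.

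Next I would treat quadruples for which $(a_i, d_i)$ meets $U$. By hypothesis \ref{hypothesis 2 of cri} the inner interval $(b_i,c_i)$ stays outside $U$, so only the outer tails $(a_i, b_i]$ and $[c_i, d_i)$ can enter $U$. Near the endpoints $a$ and $b$ of $U$, $f$ has the normal form $(x-a)^{\ell_1}$ respectively $(x-b)^{\ell_2}$, and one can compute the ratios $|f(b_i)-f(a_i)|/|f(c_i)-f(a_i)|$ and similar quantities directly and compare them to their pre-image counterparts; this gives, in each of the finitely many geometric configurations, a multiplicative error depending only on the relative positions of $a_i, d_i$ with respect to $a,b$. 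The key point is that, since $b_i, c_i$ stay away from $U$, the denominators in the image cross-ratio remain non-degenerate and the power-law form yields a uniform, quadruple-by-quadruple bound on the flat-piece contribution.

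Finally I would sum the two kinds of estimates over the $n+1$ quadruples. The overlap hypothesis ($\leq k$ intervals $(a_i, d_i)$ through each point) forces $\sum_i |(a_i, d_i)| \leq k \cdot |\Ce|$, so the diffeomorphism part is bounded by $k \sup_{\Ce \setminus \overline{U}} |Nf|$; the flat-piece part involves at most a $k$-controlled number of quadruples, as $U$ is a single arc. This produces universal constants $C_k$ for the product $\prod \D\cro$ and $C'_k$ for the product $\prod \D\po$.

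The main obstacle is precisely the control of the non-diffeomorphic endpoint behavior at $a$ and $b$: without hypothesis \ref{hypothesis 2 of cri} the cross-ratios could degenerate when $b_i$ or $c_i$ is mapped into the point $f(U)$; requiring $(b_i, c_i) \cap U = \emptyset$ is exactly what lets the power-law expansion of $f$ at the endpoints of $U$ give a bounded per-quadruple contribution and thus a bound that depends on $k$ alone, independently of $n$.
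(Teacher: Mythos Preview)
The paper does not actually prove this proposition: it simply cites de~Melo--van~Strien \cite{1}, Theorem~2. Your sketch is in the spirit of that reference, but there is a real gap in the part you call the ``diffeomorphism part''.

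You assert that when $(a_i,d_i)\subset\Ce\setminus\overline{U}$ the nonlinearity $Nf=D^2f/Df$ is bounded, so the contribution to $\log\D\cro$ is $O(|(a_i,d_i)|)$. This is false: near the right endpoint $b$ of $U$ one has $f(x)=(x-b)^{\ell_2}$, hence $Nf(x)=(\ell_2-1)/(x-b)$, which blows up as $x\to b^+$ even though $x\notin\overline{U}$. The same happens with $Sf$. So your integral bound $\int_{(a_i,d_i)}|Nf|\leq\sup|Nf|\cdot|(a_i,d_i)|$ fails precisely for quadruples that approach the endpoints of $U$ from outside, and these are not covered by your second case either, since you only treat quadruples whose outer interval actually meets $U$.

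The standard remedy (and this is what \cite{1} does) is not to integrate at all near the critical points. One proves directly the elementary inequality that the pure power map $x\mapsto x^{\ell}$ satisfies $\D\po\geq 1$ (equivalently $\D\cro\leq 1$) for \emph{every} admissible quadruple; this is where the negative Schwarzian of $x^{\ell}$ enters, but as a pointwise algebraic fact, not as a bounded integrand. Then on a fixed one-sided neighbourhood of each endpoint one writes $f=h\circ(x^{\ell})$ with $h$ a $C^3$ diffeomorphism of an interval bounded away from criticality; the distortion factorises multiplicatively, the power-map factor has the right sign and is discarded, and the $h$-factor contributes $\exp(O(|(a_i,d_i)|))$ because $Nh$ is genuinely bounded. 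Outside those fixed neighbourhoods $Nf$ really is bounded and your argument applies. Summing $\sum_i|(a_i,d_i)|\leq k|\Ce|$ then gives the constants $C_k,C'_k$.

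A secondary issue: your claim that ``the flat-piece part involves at most a $k$-controlled number of quadruples'' is neither needed nor obviously true (many small $(a_i,d_i)$ can each intersect different sub-arcs of $U$ while still $k$-covering). Once every per-quadruple contribution is of the form $\exp(O(|(a_i,d_i)|))$ after stripping the signed power-map factor, the overlap hypothesis alone suffices and no counting is required.
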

Observe that, $\po+\cro=1.$ 
Thus, by the \textbf{1.5 Lemma} in \cite{2} we have the following result.
\begin{prop}\label{constant cross ratio inequality}	
	Let $f$ be a $C^3$ function such that the Schwarzian derivative is negative. Then,	$C'_k>1$; that is, $C_k<1$.	
\end{prop}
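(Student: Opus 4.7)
The plan is to deduce this as an almost immediate consequence of the elementary identity $\po+\cro=1$ together with the $1.5$ Lemma of \cite{2}.

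First I would verify the identity: a direct expansion yields $(d-a)(c-b)+(b-a)(d-c)=(c-a)(d-b)$ for every $(a,b,c,d)\in\R_<^4$, hence $\po(a,b,c,d)+\cro(a,b,c,d)=1$. Applying this identity both to $(a,b,c,d)$ and to its image $(f(a),f(b),f(c),f(d))$, then dividing by $|c-a||d-b|$ and subtracting, produces
\begin{equation*}
(\D\po-1)\,\po(a,b,c,d)+(\D\cro-1)\,\cro(a,b,c,d)=0.
\end{equation*}
Since $\po,\cro>0$, this immediately yields the sign equivalence $\D\po>1 \Longleftrightarrow \D\cro<1$, with strict inequalities corresponding to strict inequalities.

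Next I would invoke the $1.5$ Lemma of \cite{2}: on any interval where $f$ is a $C^3$ diffeomorphism with everywhere negative Schwarzian, every quadruple satisfies $\D\po>1$. For a quadruple $(a_i,b_i,c_i,d_i)$ arising in Proposition \ref{CRI}, the hypothesis $(b_i,c_i)\cap U=\emptyset$, combined with the assumption that $f$ restricted to $\Ce\setminus\overline{U}$ is a $C^3$-diffeomorphism, places the essential sub-interval $(b_i,c_i)$ in a domain of local diffeomorphism. When $(a_i,d_i)$ itself avoids $U$ the lemma applies directly; should $(a_i,d_i)$ straddle $\overline{U}$, one reduces to the diffeomorphic situation by pushing the offending endpoint to the nearest endpoint of $\overline{U}$ and exploiting that $f$ is constant on $\overline{U}$, so that the image quadruple is not disturbed on the relevant side. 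In either case the pointwise inequality $\D\po(a_i,b_i,c_i,d_i)>1$ is obtained.

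Multiplying over $i=0,\dots,n$ gives $\prod_i \D\po(a_i,b_i,c_i,d_i)>1$, so the constant $C'_k$ of Proposition \ref{CRI} can be chosen strictly greater than $1$; by the sign equivalence of the first step the constant $C_k$ is strictly less than $1$. The genuinely delicate point in this argument is the flat-piece reduction: the $1.5$ Lemma is stated for bona fide diffeomorphisms, while $f$ collapses $\overline{U}$ to a single point. Once this reduction is carried out carefully—using both $f(\overline{U})=\{\text{pt}\}$ and the disjointness $(b_i,c_i)\cap U=\emptyset$—the proposition follows at once from the lemma cited.
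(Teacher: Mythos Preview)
Your proposal is correct and follows essentially the same approach as the paper, which derives the proposition in one line from the identity $\po+\cro=1$ together with the $1.5$ Lemma of \cite{2}. You simply flesh out what the paper leaves implicit---the verification of the identity, the sign equivalence $(\D\po-1)\po+(\D\cro-1)\cro=0$, and the flat-piece reduction---none of which the paper spells out.
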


\begin{rem}
	Let $I$ and $J$ be two intervals finite and non-zero length such that,
	$\bar{I}\cap \bar{J}=\emptyset $. We assume that, $J$ is on the right of $I$ and we put $I:=[a,b]$ and $J:=[c,d]$, then
	
	\begin{equation*}
	\cro\:(I,J):=\dfrac{|I||J|}{|[I,J)||(I,J]|}=\cro\:(a,b,c,d)
	\end{equation*}
	and 	
	\begin{equation*}
	\po\:(I,J):=\dfrac{|(I,J)||[I,J]|}{|[I,J)||(I,J]|}=\po\:(a,b,c,d).
	\end{equation*}
\end{rem}
\begin{fact}\label{fact} 
	Let  $f\in \mathscr{L} $. Let $l(U)$ and $r(U)$ be the left and right endpoints of $U$ (the flat piece of $f$) respectively. There are a left-sided neighborhood $I^{l}$ of $l(U)$, a	right-sided neighborhood $I^{r}$ of $r(U)$ and  three positive constants $K_1, K_2, K_3$ such that the following holds 
	
	\begin{enumerate}
		\item If  $y\in I^{l_i} $ with  $l_1:=l,\;l_2:=r$, then
		\begin{equation*}
		\begin{array}{c}	
		K_{1}|l_i(U)-y|^{l_i}  \leq  |f(l_i(U))-f(y) | 	\leq  K_{2}|l_i(U)-y|^{l_i},\\
		K_{1}|l_i(U)-y|^{l_i-1}  \leq \dfrac{df}{dx}(y)  	\leq  K_{2}|l_i(U)-y|^{l_i-1}.
		\end{array}
		\end{equation*}	
		\item \label{rl4}
		If $y\in(x, z)\subset I^{l_i} $, with $z$ the closest point  to the flat interval $U$ then
		\begin{equation*}
		\dfrac{|f(x)-f(y)|}{|f(x)-f(z)|}\leq  K_3 \dfrac{|x-y|}{|x-z|}. 
		\end{equation*}.
	\end{enumerate}
\end{fact}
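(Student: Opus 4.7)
The plan is to exploit the explicit local form of $f$ near the endpoints of $U$ that comes with the definition of the class $\mathscr{L}$. Under the paper's standing convention $h_{l}=h_{r}=\mathrm{id}$, on a sufficiently small right-sided neighborhood of $b$ one has $f(y)=(y-b)^{\ell_{2}}$, and symmetrically on a left-sided neighborhood of $a$ one has $f(y)=(a-y)^{\ell_{1}}$. I simply take $I^{r}$ and $I^{l}$ to be such neighborhoods, so throughout the proof $f$ is represented by a single pure power. Part 1 then drops out from the equalities
\begin{equation*}
|f(l_{i}(U))-f(y)|=|l_{i}(U)-y|^{\ell_{i}}, \qquad \tfrac{df}{dx}(y)=\ell_{i}\,|l_{i}(U)-y|^{\ell_{i}-1},
\end{equation*}
which give both displayed double inequalities with any $K_{1}\le \min(1,\ell_{1},\ell_{2})$ and $K_{2}\ge \max(1,\ell_{1},\ell_{2})$. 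If one prefers to keep the genuine $C^{3}$ diffeomorphism $h_{l/r}$ without normalization, the same estimates hold after one application of the mean value theorem, with $K_{1},K_{2}$ also absorbing $\inf h'_{l/r}$ and $\sup h'_{l/r}$ on the chosen neighborhood.

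For Part 2, I work on the right-sided neighborhood (the left-sided case is symmetric) and set $u=x-b$, $v=y-b$, $w=z-b$, so that $0<w<v<u$ since $z$ is the point closest to $U$. The stated inequality then rearranges to
\begin{equation*}
\frac{u^{\ell_{2}}-v^{\ell_{2}}}{u-v}\;\le\;K_{3}\cdot\frac{u^{\ell_{2}}-w^{\ell_{2}}}{u-w}.
\end{equation*}
I bound the two sides independently. For the left-hand side, $u^{\ell_{2}}-v^{\ell_{2}}=\ell_{2}\int_{v}^{u}t^{\ell_{2}-1}\,dt\le \ell_{2}\,u^{\ell_{2}-1}(u-v)$, since $t^{\ell_{2}-1}$ is non-decreasing on $[0,\infty)$ for $\ell_{2}\ge 1$. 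For the right-hand side, the elementary identity $u^{\ell_{2}}-w^{\ell_{2}}-u^{\ell_{2}-1}(u-w)=w\bigl(u^{\ell_{2}-1}-w^{\ell_{2}-1}\bigr)\ge 0$ yields $u^{\ell_{2}}-w^{\ell_{2}}\ge u^{\ell_{2}-1}(u-w)$. Dividing, the ratio of the two averaged slopes is at most $\ell_{2}$, so $K_{3}:=\max(\ell_{1},\ell_{2})$ works uniformly on both sides.

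No step here is a genuine obstacle: both parts reduce to monotonicity of $t\mapsto t^{\ell_{i}-1}$ on $[0,\infty)$ together with the mean-value-type comparison above. The only mild subtlety is to make $I^{l}$ and $I^{r}$ small enough so that the local normal form is genuinely valid; any residual $C^{3}$ diffeomorphic prefactor (if one does not absorb it via the change of coordinates) is controlled by a Taylor estimate on that neighborhood and contributes only multiplicative constants to $K_{1},K_{2},K_{3}$.
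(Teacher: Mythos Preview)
The paper does not supply a proof of this Fact; it is recorded as a direct consequence of the local normal form built into the definition of $\mathscr{L}$ (and of the standing normalisation $h_{l}=h_{r}=\mathrm{id}$), and is then used freely thereafter. So there is nothing in the paper to compare your argument against.

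That said, your derivation is correct. Part~1 is immediate from the pure-power representation, and your remark about absorbing a genuine $C^{3}$ prefactor $h_{l/r}$ into the constants via the mean value theorem is exactly the right way to cover the unnormalised case. For Part~2 your two one-sided bounds
\[
\frac{u^{\ell}-v^{\ell}}{u-v}\le \ell\,u^{\ell-1},
\qquad
\frac{u^{\ell}-w^{\ell}}{u-w}\ge u^{\ell-1}
\]
are clean and sharp enough to give $K_{3}=\max(\ell_{1},\ell_{2})$; note that by convexity of $t\mapsto t^{\ell}$ the ratio of secant slopes genuinely exceeds $1$, so some $K_{3}>1$ is necessary and your bound is of the right order. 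The only cosmetic point is that on the left-sided neighborhood the paper writes $h_{l}((x-a)^{\ell_{1}})$ with $x<a$, which for non-integer $\ell_{1}$ must be read as a signed power of $|x-a|$; your use of $(a-y)^{\ell_{1}}$ handles this correctly.
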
 

The first part of  \textbf{Fact~\ref{fact}} implies that,
\begin{equation} \label{add assump 2}
f_{|I^{l_i}} \approx k_i x^{\ell_i};\;i=1,2. \quad \mbox{We assume that, }\quad k_1=k_2.\tag{A2}
\end{equation}
We need this assumption to  prove that $\alpha_n$ goes to zero (\textbf{Lemma~\ref{sigma go a away}} and \textbf{Lemma~\ref{use additional condition}}).	

\subsection{Basic Results}
\begin{prop}\label{partition of f}
	Let $ n\geq1$.
	\begin{enumerate}
		\item[$ \bullet $] The set of ``long'' intervals consists of the intervals
		\begin{equation*}		
		\mathcal{A}_n:=\{(\underline{i},\underline{q_{n}+i});\;0\leq i \leq q_{n+1}-1 \}.
		\end{equation*} 
		\item[$ \bullet $]  The set of ``short'' intervals consists of the intervals
		\begin{equation*}		
		\mathcal{B}_n:=\{(\underline{q_{n+1}+i},\underline{i});\;0\leq i \leq q_{n}-1 \}.
		\end{equation*}
	\end{enumerate}
	The set $\mathcal{P}_n:=\mathcal{A}_n\cup \mathcal{B}_n$ covers the circle modulo the end points and the flat piece and it is called the $n^{th}$ dynamical partition.
	
	The dynamical partition produced by the first $\underline{q_{n+1}+q_n} $ pre-images of $U$ is denoted  $\mathcal{P}^{n} $. It consists of 
	\begin{equation*}		
	\wp_n:=\{\underline{-i};\;0\leq i \leq q_{n+1}+q_{n}-1 \}
	\end{equation*}
	together with the gaps between these sets. As in the case of $\mathcal{P}_{n} $ there are two kinds of gaps, ``long'' and ``short'':
	\begin{enumerate}
		\item[$ \bullet $] The set of ``long'' intervals consists of the intervals
		\begin{equation*}		
		\mathcal{A}^{n}:=\{(\underline{-q_{n}-i},\underline{-i})=:I^{n}_i;\;0\leq i \leq q_{n+1}-1 \}.
		\end{equation*} 
		\item[$ \bullet $]  The set of ``short'' intervals consists of the intervals
		\begin{equation*}		
		\mathcal{B}^{n}:=\{(\underline{-i},\underline{-q_{n+1}-i})=:I^{n+1}_i;\;0\leq i \leq q_{n}-1 \}.
		\end{equation*}
	\end{enumerate}
\end{prop}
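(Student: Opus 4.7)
The plan is to derive the partition structure from the standard combinatorics of orbits under irrational rotations, transferred to $f$ via semi-conjugacy. First I would construct a continuous, degree-one, weakly order-preserving surjection $h:\Ce\to\Ce$ satisfying $h\circ f=R_\rho\circ h$, where $R_\rho$ is the rigid rotation by $\rho(f)$; existence of $h$ is standard for circle maps in $\mathscr{L}$ with irrational rotation number (see e.g.\ \cite{4,7}), and $h$ collapses the flat piece $U$ together with its orbit of preimages to individual points, while being injective on the non-wandering set $K_f$.

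Next, I would recall the classical partition result for $R_\rho$. Setting $y_0:=h(U)$ and $y_i:=R_\rho^i(y_0)$, the theory of continued fractions and best approximations yields, for each $n\geq 1$, the partition of the circle whose long atoms are $(y_i,y_{q_n+i})$ for $0\leq i\leq q_{n+1}-1$, and whose short atoms are $(y_{q_{n+1}+i},y_i)$ for $0\leq i\leq q_n-1$. This is proved by induction on $n$ using the recurrence $q_{n+1}=a_nq_n+q_{n-1}$: each long atom at level $n$ is cut by the iterates
\begin{equation*}
y_{q_{n-1}+i},\; y_{q_n+q_{n-1}+i},\;\ldots,\; y_{(a_n-1)q_n+q_{n-1}+i}
\end{equation*}
into $a_n$ new short atoms at level $n+1$ together with one long atom at level $n+1$, while each short atom at level $n$ is promoted to a long atom at level $n+1$. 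The base case and the reindexing $y_{q_n+i}\mapsto y_i$ inside each long atom match directly.

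Then I would pull the partition back via $h$. Since $h$ is monotone and continuous, preimages of intervals are intervals, and $h^{-1}(y_i)$ corresponds, on $K_f$, to $\underline{i}=f^i(U)$; the long interval $(y_i,y_{q_n+i})$ pulls back to $(\underline{i},\underline{q_n+i})$, and analogously for the short intervals, giving $\mathcal{P}_n$. For the second part of the statement, I would run the same argument with $f^{-1}$ (which is well defined on $\Ce\setminus\overline{U}$ since the restriction of $f$ is a diffeomorphism onto its image) and with $R_\rho^{-1}=R_{-\rho}$, to produce $\wp_n$ together with the atoms $I^n_i$ and $I^{n+1}_i$ of $\mathcal{P}^n$.

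The main obstacle will be handling the ``modulo the flat piece'' caveat carefully, especially in the presence of the collapses performed by $h$. I would verify that the atoms of $\mathcal{P}_n$ have as endpoints exactly the points $\underline{i}$, $0\leq i\leq q_{n+1}+q_n-1$ (the forward images of the endpoints of $U$), so that the uncovered portion of $\Ce$ consists precisely of $U$ itself and identifications at endpoints; the analogous accounting for $\mathcal{P}^n$ uses the full set $\wp_n$ of the first $q_{n+1}+q_n$ preimages of $U$. Here one must check that successive preimages remain pairwise disjoint and that their cyclic order around $\Ce$ matches the cyclic order of $\{R_{-\rho}^i(y_0)\}_i$; this follows from the injectivity of $h$ on $K_f$ and the monotonicity of $f$ off $\overline{U}$.
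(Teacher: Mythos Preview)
The paper does not actually prove this proposition; it is presented as a recall of standard combinatorial structure, and the surrounding basic results are simply deferred to \cite{5}. Your approach via the semi-conjugacy $h$ to the rigid rotation $R_\rho$ is correct and is in fact the standard mechanism underlying the construction in \cite{5}: the dynamical partitions $\mathcal{P}_n$ and $\mathcal{P}^n$ are exactly the pullbacks under $h$ of the classical continued-fraction partitions for $R_\rho$, and your inductive refinement of the long atoms at level $n$ into $a_n$ short atoms plus one long atom at level $n+1$ is precisely the combinatorics used there.

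One small refinement: for $\mathcal{P}^n$ there is no need to switch to $f^{-1}$ and $R_{-\rho}$ as a separate dynamical system. The preimages $\underline{-i}=f^{-i}(U)$ are well-defined arcs because $f$ has degree one and is a diffeomorphism off $\overline{U}$, and the relation $h\circ f=R_\rho\circ h$ already forces $h(\underline{-i})=R_\rho^{-i}(y_0)$, which fixes their cyclic order. Thus the same pullback argument you used for $\mathcal{P}_n$ applies directly with the index set shifted to negative iterates, and the gaps of $\wp_n$ are automatically the $I^n_i$ and $I^{n+1}_i$ described.
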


\begin{prop}\label{qn go to zero} 
	The sequence  $|( \underline{0},\underline{q_n})| $
	tends to zero  at least exponentially fast.
\end{prop}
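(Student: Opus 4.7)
The plan is to combine two ingredients: the disjointness of the long intervals within a single dynamical partition (from Proposition~\ref{partition of f}), and bounded distortion for iterates of $f$, obtained from the Cross-Ratio Inequality (Proposition~\ref{CRI}). First, since the intervals $(\underline{i},\underline{q_n+i})$ for $0\le i\le q_{n+1}-1$ are pairwise disjoint and contained in $\Ce\setminus U$, we have
\begin{equation*}
\sum_{i=0}^{q_{n+1}-1} |(\underline{i},\underline{q_n+i})| \le |\Ce|,
\end{equation*}
so at least one of them, say $(\underline{i_0},\underline{q_n+i_0})$, has length no larger than $|\Ce|/q_{n+1}$.

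Next, I would transfer this estimate to the specific interval $(\underline{0},\underline{q_n})$. Since $f^{i_0}$ sends $(\underline{0},\underline{q_n})$ onto $(\underline{i_0},\underline{q_n+i_0})$ and, away from $U$, $f$ is a $C^3$-diffeomorphism, I would pick a family of quadruples having the long interval $(\underline{0},\underline{q_n})$ (or small subintervals of it) as the $(b,c)$-middle and slightly larger surrounding intervals as the $(a,d)$-outside. By the combinatorial structure of the dynamical partition, the orbit of such quadruples under $f, f^2,\dots,f^{i_0}$ has bounded multiplicity (each point of the circle lies in at most a uniformly bounded number of them), and the middle intervals never hit $U$. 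Proposition~\ref{CRI} then gives a constant $C>0$, independent of $n$ and of $i_0$, such that
\begin{equation*}
|(\underline{0},\underline{q_n})| \le C\,|(\underline{i_0},\underline{q_n+i_0})| \le \frac{C\,|\Ce|}{q_{n+1}}.
\end{equation*}

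To conclude, I use the recurrence $q_{n+1}=a_n q_n+q_{n-1}\ge q_n+q_{n-1}$: the $q_n$'s dominate the Fibonacci sequence, hence grow at least like $\tau^n$ with $\tau=(1+\sqrt 5)/2$. Consequently $|(\underline{0},\underline{q_n})|$ decays at least geometrically in $n$.

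The delicate step is the second one: verifying the two hypotheses of Proposition~\ref{CRI} uniformly in $n$. Choosing the right chain of quadruples so that (i) the outer intervals $(a_i,d_i)$ cover each point of $\Ce$ with multiplicity bounded independently of $n$, and (ii) the inner intervals $(b_i,c_i)$ stay off the flat piece $U$, is where the combinatorics of the partitions $\mathcal{P}_n$ really must be exploited. Once this distortion control is in place, both the exponential decay here and subsequent geometric estimates on $\alpha_n$ follow cleanly.
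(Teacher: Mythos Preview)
Your pigeonhole step and the Fibonacci lower bound on $q_n$ are correct, but step~2 has a genuine gap that goes beyond checking the hypotheses of Proposition~\ref{CRI}: the asserted inequality $|(\underline{0},\underline{q_n})|\le C\,|(\underline{i_0},\underline{q_n+i_0})|$ with $C$ uniform in $n$ is \emph{false} in $\mathscr{L}$ whenever a critical exponent exceeds~$1$. One endpoint of $(\underline{0},\underline{q_n})$ sits on $\partial U$, where by Fact~\ref{fact} the map behaves like $x\mapsto x^{\ell}$; hence $|(\underline{1},\underline{q_n+1})|\asymp |(\underline{0},\underline{q_n})|^{\ell}$, and the ratio $|(\underline{0},\underline{q_n})|/|(\underline{1},\underline{q_n+1})|$ is unbounded in~$n$. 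Thus $(\underline{0},\underline{q_n})$ can be the \emph{largest} of the $q_{n+1}$ long intervals by an arbitrarily large factor, and knowing that the smallest one is at most $|\Ce|/q_{n+1}$ gives no upper bound on it. There is a second, related issue: Proposition~\ref{CRI} controls a telescoping product of cross\nobreakdash-ratio distortions $\D\cro$ or $\D\po$, not raw length ratios; you have not exhibited a quadruple for which the cross\nobreakdash-ratio at the two ends of the chain reduces to the length comparison you need.

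The paper itself gives no argument here and defers to~\cite{5}. Your scheme can be salvaged by shifting one iterate: for $1\le i\le q_{n+1}-1$ the branch $f^{i-1}$ restricted to $(\underline{1},\underline{q_n+1})$ never meets $\overline{U}$, so with a suitable diffeomorphic extension and Koebe space (this is where the real combinatorial work lies, and it is not the verification you describe) one may obtain $|(\underline{1},\underline{q_n+1})|\le C/q_{n+1}$; then Fact~\ref{fact} yields $|(\underline{0},\underline{q_n})|\lesssim (C/q_{n+1})^{1/\ell}$, which still decays exponentially since $q_{n+1}$ grows at least geometrically.
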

\begin{prop}\label{preimage and gaps adjacent} 
	If $A$ is a pre-image of $U$ belonging to $\mathcal{P}^{n} $ and if $B$ is one of the gaps adjacent to $A$, then $|A|/|B|$ is bounded away from zero by a constant that does not depend on $n$, $A$ or $B$.
\end{prop}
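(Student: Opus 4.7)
The plan is to iterate $A$ forward to the flat piece $U$ by $f^j$ (where $A=\underline{-j}$) and exploit bounded distortion of $f^j$ on a suitable neighborhood of $A\cup B$. Write $A=\underline{-j}$ for some $0\leq j\leq q_{n+1}+q_n-1$, and let $B\in\mathcal{A}^n\cup\mathcal{B}^n$ be an adjacent gap, so that one endpoint of $B$ is $\underline{-j}$ while the other is some $\underline{-k}$ with $k\in\{j+q_n,\ j+q_{n+1}\}$ or $k=j-q_n$ (when that index falls in the admissible range). Then $f^j(A)=U$ and $f^j(B)$ is one of the intervals directly adjacent to $U$ in $\mathcal{P}^0$, with endpoints $U$ and $\underline{-k+j}$; in particular $|f^j(B)|\leq|\Ce|$ whereas $|f^j(A)|=|U|$ is a fixed positive constant.

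Next, I would enlarge $A\cup B$ to the interval $T$ consisting of $A$ together with its two flanking gaps in $\mathcal{P}^n$. Since no preimage $\underline{-k}$ with $0\leq k<j$ lies in the interior of $T$, the restriction $f^j|_T$ is a $C^3$ diffeomorphism. I would then invoke the Cross-Ratio Inequality (Proposition~\ref{CRI}) applied to appropriate quadruples in $T$ to obtain a uniform distortion bound on $f^j|_T$. The key input is that the forward orbit $\{f^k(T):0\leq k\leq j\}$ covers $\Ce$ with uniformly bounded multiplicity, which follows from the fact that $T$ is contained in a single element of $\mathcal{P}^n$, combined with the pairwise disjointness of the elements of the refined partitions $\mathcal{P}^{n-k}$; this is a standard feature of dynamical partitions of circle maps with irrational rotation number. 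With a distortion constant $C>0$ independent of $n$, $A$, and $B$, the bound
\begin{equation*}
\frac{|A|}{|B|}\ \geq\ \frac{1}{C}\cdot\frac{|f^j(A)|}{|f^j(B)|}\ =\ \frac{1}{C}\cdot\frac{|U|}{|f^j(B)|}\ \geq\ \frac{|U|}{C\,|\Ce|}
\end{equation*}
provides the desired uniform lower bound.

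The main obstacle is the uniform covering-multiplicity bound for $\{f^k(T)\}$; although intuitive from the Denjoy-type rigidity of irrational circle maps, a clean verification requires careful bookkeeping of how $T$ is positioned within the refined partitions $\mathcal{P}^{n-k}$ as $k$ increases. One must also ensure that the quadruples chosen inside $T$ satisfy hypothesis~\ref{hypothesis 2 of cri} of the Cross-Ratio Inequality (i.e., the inner pair avoids the flat piece $U$) throughout the iteration, and edge cases—such as when $B$ is the first or last gap of $\mathcal{P}^n$ or when $T$ abuts the boundary of the flat piece—may require separate treatment using the power-law estimates from Fact~\ref{fact}.
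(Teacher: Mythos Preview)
The paper does not prove this proposition; the sentence following Lemma~\ref{sigma go a away} defers it (together with Proposition~\ref{qn go to zero} and that lemma) to \cite{5}, where it appears as Proposition~2. So there is no in-paper argument to compare your sketch against, and your outline already goes further than the text itself.

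Your strategy---push $A=\underline{-j}$ forward by $f^{j}$ to $U$ and control distortion on a neighbourhood $T\supset A\cup B$ via Proposition~\ref{CRI} with bounded covering multiplicity---is exactly the standard route taken in \cite{5}. Your justification that $f^{j}|_{T}$ is a diffeomorphism is also correct: since $f$ has degree one and irrational rotation number, each $f^{-i}(U)$ is the single interval $\underline{-i}$, and the open interval $T$ meets none of $\underline{0},\dots,\underline{-(j-1)}$, so Remark~\ref{diffeo on an interval} applies.

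The one genuine looseness is the displayed inequality
\[
\frac{|A|}{|B|}\ \ge\ \frac{1}{C}\cdot\frac{|f^{j}(A)|}{|f^{j}(B)|}.
\]
This is a Koebe-type distortion bound on all of $A\cup B$, and Proposition~\ref{koebe principle} requires definite space on \emph{both} sides of $A\cup B$ in the image; your choice $T=B'\cup A\cup B$ gives space only on the $B'$ side. Enlarging $T$ by the neighbouring preimage $C$ beyond $B$ does not immediately rescue this, because $|f^{j}(C)|$ need not be bounded below---indeed $f^{j}(C)$ collapses to a point whenever $C=\underline{-k}$ with $k<j$, which genuinely occurs in the partition combinatorics. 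The clean fix, and what the argument in \cite{5} amounts to, is to dispense with the pointwise Koebe bound and work directly with a single cross-ratio whose inner interval is $B$: then hypothesis~\ref{hypothesis 2 of cri} of Proposition~\ref{CRI} is satisfied along the whole orbit (the gap $B$ never meets $U$ before step $j$), the bounded-multiplicity input you identified controls the constant, and the edge cases you flag are absorbed automatically since only the inner interval is required to avoid $U$.
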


\begin{lem}\label{sigma go a away}
	The  sequence 
	\begin{equation*}	
	f(\sigma_n)=	\dfrac{|( \underline{1},\underline{q_n+1})|}{|( \underline{q_{n-1}+1}, \underline{1})|}	
	\end{equation*}	
	is bounded.	\end{lem}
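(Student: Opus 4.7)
The plan is to apply the Cross-Ratio Inequality (Proposition~\ref{CRI}) along a finite piece of orbit, reducing the ratio
\[
f(\sigma_n)=\frac{|(\underline{1},\underline{q_n+1})|}{|(\underline{q_{n-1}+1},\underline{1})|}
\]
to lengths near a pre-image of $U$, where Proposition~\ref{preimage and gaps adjacent} forces comparability. The three points $\underline{q_{n-1}+1}$, $\underline{1}$, $\underline{q_n+1}$ are consecutive iterates along the orbit of $\underline{1}$, with $\underline{1}$ sitting strictly between the other two, so these intervals form exactly the setting to which a cross-ratio estimate applies.

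I would pick two auxiliary points $a,d$ chosen from adjacent elements of the partition $\wp_n$ of Proposition~\ref{partition of f} so that $a<\underline{q_{n-1}+1}<\underline{1}<\underline{q_n+1}<d$, and then work with the quadruple $(a,\underline{q_{n-1}+1},\underline{q_n+1},d)$. By the combinatorics of the continued fraction expansion, there exists $m$ of order $q_{n-1}$ for which $f^m$ maps $(a,d)$ homeomorphically into a small neighbourhood of $U$; during these $m$ iterations each interval $f^i((\underline{q_{n-1}+1},\underline{q_n+1}))$ stays outside $U$, and the intervals $f^i((a,d))$ cover each point of $\Ce$ at most $k$ times, with $k$ independent of $n$.

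Applying the $\cro$ part of Proposition~\ref{CRI} then gives a uniform upper bound of the form
\[
\cro(f^m(a,\underline{q_{n-1}+1},\underline{q_n+1},d))\leq C_k\,\cro(a,\underline{q_{n-1}+1},\underline{q_n+1},d).
\]
The image cross-ratio is bounded below because $f^m$ sends the four points into a configuration adjacent to a pre-image of $U$, to which Proposition~\ref{preimage and gaps adjacent} applies. Unwinding the defining formula of $\cro$ and using Fact~\ref{fact} to convert the adjacent lengths $|(\underline{1},\underline{q_n+1})|$ and $|(\underline{q_{n-1}+1},\underline{1})|$ into the local power-law form near $l(U),r(U)$, I recover an upper bound on $f(\sigma_n)$ independent of $n$.

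The main obstacle is the combinatorial verification of the CRI hypotheses: one must exhibit $m$ and auxiliary points $a,d$ such that the interior pair $(\underline{q_{n-1}+1},\underline{q_n+1})$ and the exterior pair $(a,d)$ simultaneously satisfy the $U$-avoidance hypothesis and the bounded-overlap condition. This is purely combinatorial, and is carried out by tracking the orbit through the partition $\mathcal{P}_n$ using the best-approximation property of the denominators $q_n$ (Proposition~\ref{qn go to zero}).
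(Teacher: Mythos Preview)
The paper does not give its own proof of this lemma: immediately after stating it, the text says ``The proofs of these results can be found in \cite{5} (proof of the \textbf{Proposition 1}, \textbf{Proposition 2} and \textbf{Lemma 1.3}).'' So there is no in-paper argument to compare your proposal against; the only benchmark is the argument of Lemma~1.3 in \cite{5}, which is indeed a cross-ratio/combinatorics argument of the general flavour you describe.

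That said, your sketch as written is not yet a proof. Two concrete issues:
\begin{itemize}
\item The quadruple you write down, $(a,\underline{q_{n-1}+1},\underline{q_n+1},d)$, has $\cro=\dfrac{|(a,\underline{q_{n-1}+1})|\,|(\underline{q_n+1},d)|}{|(a,\underline{q_n+1})|\,|(\underline{q_{n-1}+1},d)|}$, which contains neither $|(\underline{1},\underline{q_n+1})|$ nor $|(\underline{q_{n-1}+1},\underline{1})|$. You have not explained how a bound on this cross-ratio converts into an upper bound on the ratio $f(\sigma_n)$; the ``unwinding'' step is where the actual content would have to be, and it is missing.
\item The CRI gives $\cro(f^m(\cdot))\le C_k\,\cro(\cdot)$, hence a \emph{lower} bound on the image cross-ratio yields only a \emph{lower} bound on the original one. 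You then need to argue from a lower bound on $\cro$ to an upper bound on the quotient of adjacent middle intervals, which requires control of the outer intervals $|(a,\underline{q_{n-1}+1})|$ and $|(\underline{q_n+1},d)|$ relative to the rest. That step is absent.
\end{itemize}
In \cite{5} the proof proceeds by pulling the quadruple back (applying $f^{q_{n-1}-1}$ to the configuration around $\underline{1}$) to land next to $\underline{0}$, where Proposition~\ref{preimage and gaps adjacent} gives the needed comparability directly; the auxiliary points and the specific iterate are chosen so that the cross-ratio literally contains the two intervals whose ratio is $f(\sigma_n)$. Your plan is in the right spirit, but you should either follow that setup or make explicit the missing conversion step above.
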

The proofs of these results can be found in \cite{5} (proof of the \textbf{Proposition 1}, \textbf{Proposition 2} and \textbf{Lemma 1.3}).

A proof of the following Proposition can be found in \cite{3} \textbf{theorem:3.1 p.285}).
\begin{prop}[Koebe principle]\label{koebe principle}
	Let	$ f\in \mathscr{L}$. For
	every $\varsigma,\,\alpha>0$, there exist a constant $\zeta(\varsigma,\alpha)>0$, such that,  the following holds. Let $T$ and $M\subset T$ be
	two intervals and let $S,\;D$ be the left and the right component
	of $T\setminus M$ and $n\in\N$. Suppose that:
	\begin{enumerate}
		\item $\sum_{i=0}^{n-1}f^i(T)<\varsigma $,
		\item $f^n: T\longrightarrow f^n(T)$ is a diffeomorphism,
		\item $\dfrac{|f^n(M)|}{|f^n(S)|}, \;\dfrac{|f^n(M)|}{|f^n(D)|}<
		\alpha.$
	\end{enumerate}
	
	Then,
	\begin{equation*}
	\dfrac{1}{\zeta(\varsigma,\alpha)}\leq\dfrac{	Df^n(x)}{	Df^n(y)}\leq\zeta(\varsigma,\alpha), \quad \forall x,y\in M;
	\end{equation*}
	that is,
	\begin{equation*}
	\dfrac{1}{\zeta(\varsigma,\alpha)}\cdot\dfrac{|A|}{|B|}\leq\dfrac{	f^n(A)}{	f^n(B)}\leq\zeta(\varsigma,\alpha)\cdot\dfrac{|A|}{|B|}, \quad \forall A, B\,\mbox{(intervals)}\subseteq M;
	\end{equation*}
	where,
	\begin{equation*}
	\zeta(\varsigma,\alpha)=\dfrac{1+ \alpha}{\alpha}e^{C\varsigma}
	\end{equation*}
	and $C\geq 0$ only depends on $f$.
\end{prop}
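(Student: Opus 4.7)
The plan is to derive the Koebe principle as a consequence of the Cross-Ratio Inequality (Proposition~\ref{CRI}) already at our disposal, following the classical strategy of de~Melo and van~Strien. The idea is that CRI controls how much $f^n$ distorts cross-ratios along an orbit, and the Koebe ``space'' hypothesis~(3) converts this orbit control into a pointwise bound on the derivative ratio $Df^n(x)/Df^n(y)$.

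First I would fix two points $x<y$ in $M$, let $a,d$ denote the left and right endpoints of $T$, and consider the quadruples
\[ \bigl(f^i(a),\,f^i(x),\,f^i(y),\,f^i(d)\bigr),\qquad i=0,1,\dots,n-1. \]
Condition~(1), $\sum_{i=0}^{n-1}|f^i(T)|<\varsigma$, together with the unit length of $\Ce$, guarantees that each point of the circle lies in at most $k:=\lceil\varsigma\rceil+1$ of the intervals $f^i(T)=(f^i(a),f^i(d))$. Because $f^n\!\!\upharpoonright_T$ is a diffeomorphism by~(2), no $f^i(T)$ with $i<n$ can meet the flat piece $U$; consequently the middle intervals $(f^i(x),f^i(y))$ do not meet $U$ either. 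Both hypotheses of Proposition~\ref{CRI} are thus satisfied and it yields
\[ \frac{\cro\bigl(f^n(a),f^n(x),f^n(y),f^n(d)\bigr)}{\cro(a,x,y,d)}\leq C_k, \]
together with the companion lower bound on $\po$.

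Next, I would exploit the Koebe space at the last step. Condition~(3) gives $|f^n(S)|,\,|f^n(D)|\geq \alpha^{-1}|f^n(M)|$, so
\[ |f^n(x)-f^n(a)|,\ |f^n(d)-f^n(y)|\;\geq\; \alpha^{-1}\,|f^n(y)-f^n(x)|. \]
This translates into a uniform bound of order $(1+\alpha)/\alpha$ on the denominators appearing in $\cro$ and $\po$ at time~$n$. Combining this with the previous estimate and letting $y\to x$ so that the middle pair of points collapses to a single $t\in M$, one obtains the infinitesimal distortion inequality
\[ \frac{Df^n(t)\,|x-a|\,|d-x|}{|f^n(x)-f^n(a)|\,|f^n(d)-f^n(x)|}\cdot |f^n(d)-f^n(a)|\;\leq\; C_k\cdot|d-a|. \]
Applying the same inequality at a second point $s\in M$ and taking the quotient, the Koebe-space control cancels the time-$n$ factors up to a multiplicative constant $\tfrac{1+\alpha}{\alpha}$, while the CRI-produced constant accumulates to a factor $e^{C\varsigma}$, with $C$ depending only on a bound for the non-linearity of $f$ on the domain where it is a diffeomorphism.

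The main obstacle I anticipate is making the infinitesimal limit rigorous in the presence of the flat piece: one must check that the intermediate intervals $(f^i(x),f^i(y))$ stay uniformly away from $U$ as $y\to x$, and that the products of distortions along the orbit survive the passage to the limit. The diffeomorphism hypothesis~(2) is what saves the argument, since it forbids any $f^i(T)$ with $i<n$ from touching $U$, so every denominator appearing in the cross-ratios is uniformly bounded away from zero. Once the infinitesimal inequality is in place, the standard bookkeeping (as in \cite{3}) produces precisely the constant $\zeta(\varsigma,\alpha)=\tfrac{1+\alpha}{\alpha}\,e^{C\varsigma}$ claimed in the statement.
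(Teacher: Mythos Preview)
The paper does not supply its own proof of this proposition: immediately before the statement it writes ``A proof of the following Proposition can be found in \cite{3} (theorem~3.1, p.~285)'' and gives no further argument. So there is nothing in the paper to compare your write-up against beyond that citation.

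Your sketch is broadly the classical de~Melo--van~Strien route that the paper is pointing to, so in spirit you are aligned with the cited source. Two remarks, however. First, the displayed ``infinitesimal distortion inequality'' you wrote is not quite the correct limiting form of the cross-ratio estimate; the factors do not balance as written, and in the standard argument one does not literally pass to the limit $y\to x$ but rather compares $|f^n(J)|/|J|$ for two subintervals of $M$ directly via the cross-ratio bound. Second, the exponential factor $e^{C\varsigma}$ in the constant does \emph{not} come from the CRI constant $C_k$ of Proposition~\ref{CRI}; it comes from a separate nonlinearity estimate, bounding $\sum_{i<n}|Nf(f^i(t))|\,|f^i(T)|$ with $Nf=f''/f'$ and using $\sum_i|f^i(T)|<\varsigma$. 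The CRI as stated in this paper only gives a constant depending on the covering multiplicity $k\approx\varsigma$, without the explicit exponential form. If you want the precise constant $\zeta(\varsigma,\alpha)=\tfrac{1+\alpha}{\alpha}e^{C\varsigma}$, you should follow the nonlinearity argument in \cite{3} rather than trying to extract it from Proposition~\ref{CRI} alone.
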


\begin{rem}\label{diffeo on an interval}
	Let $ f\in \mathscr{L} $. Given  $n>1$, $T$  and  $M$ as before. $f^{n}:T\longrightarrow f^{n}(T)$ is diffeomorphism if only if, for all $0\leq i\leq n-1$, $f^{i}(T)\cap \overline{U}=\emptyset$; where, $ \overline{U} $ 	designates the 	closure of $U$.  
\end{rem}

\section{ Proof of  results}
Let us put together (parameter) sequences which are frequently used in this section.
\begin{equation*}
\alpha_n=\dfrac{|(\underline{-q_n}, \underline{0})|}{|[\underline{-q_n}
	, \underline{0})|},\quad 
\sigma_n=	\dfrac{|( \underline{0},\underline{q_n})|}{|( \underline{q_{n-1}}, \underline{0}|},\quad s_n:=\dfrac{|[\underline{-q_{n-2}}, \underline{0}]|}{|\underline{0}|},\quad \tau_n:=\dfrac{|(\underline{0}, \underline{q_{n}}) |}{|(\underline{0}, \underline{q_{n-2}}) |}
\end{equation*}
and
\begin{equation*}
\beta_n(k)=\dfrac{|(\underline{-q_{n}+kq_{n-1}}, \underline{0})|}{|[\underline{-q_{n}+kq_{n-1}}, \underline{0}) |};\; k=0,1,\cdots a_{n-1}.
\end{equation*}

\subsection{ Proof of the first part of  main result}
\subsubsection{A priori Bounds of $\alpha_n$ }
\begin{prop}\label{asymptotically}
	Let $n\in\N$ and $(\ell_1,\ell_2)\in\Omega_0=[1,2]^2$. 
	\begin{description}
		\item For all $\alpha_n$, 
		\begin{equation*}
		\alpha_{n}^\frac{\ell_1,\ell_2}{2}<0.55;
		\end{equation*}
		\item for at least every other $\alpha_n$
		\begin{equation*}
		\alpha_{n}^\frac{\ell_1,\ell_2}{2}<0.3.
		\end{equation*}
		\item	If 		\begin{equation*}\alpha_{n}^\frac{\ell_1,\ell_2}{2}>0.3 ,
		\end{equation*}	
		then either,
		\begin{equation*}
		\alpha_{n}^\frac{\ell_1,\ell_2}{2} <0.44 \quad \mbox{or}\quad \alpha_{n+1}^\frac{\ell_1,\ell_2}{2}<0.16.
		\end{equation*}	
		
	\end{description}	
\end{prop}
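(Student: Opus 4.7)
The plan is to derive from the Cross-Ratio Inequality (Proposition~\ref{CRI}), coupled with the power-law expansion of $f$ at the endpoints of $U$ given by Fact~\ref{fact} and assumption~\textbf{(\ref{add assump 2})}, a single recursive inequality of the shape
\begin{equation*}
\Phi\!\left(\alpha_n^{\frac{\ell_1,\ell_2}{2}},\,\alpha_{n+1}^{\frac{\ell_1,\ell_2}{2}}\right)\;\leq\;C_k\;<\;1,
\end{equation*}
where $\Phi$ is an explicit algebraic function, increasing in each argument, and $C_k<1$ is the universal constant supplied by Proposition~\ref{constant cross ratio inequality} under the Schwarzian hypothesis~\textbf{(\ref{add assumpt 1})}. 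Once such a ``master inequality'' is in hand, the three items of the proposition become elementary numerical consequences.

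To produce this master inequality, I would first select, for each $n$, a four-point configuration built from the endpoints of $\underline{-q_n}$, $\underline{-q_{n-1}}$ and the endpoint of $U$ facing them, so that its $\cro$ value encodes $\alpha_n$ (or $1-\alpha_n$) on one side of $U$. By Proposition~\ref{partition of f}, the relevant forward iterates of this quadruple avoid the flat piece, so the distortion of $\cro$ along the orbit is controlled by the single bounded constant $C_k$. The only iterations for which something non-linear happens are the very first and the very last, which take place in $I^l\cup I^r$: there Fact~\ref{fact} together with~\textbf{(\ref{add assump 2})} forces the corresponding length ratios to be raised to $\ell_1$ on the left and $\ell_2$ on the right side of $U$. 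A symmetric splitting of this contribution, essentially a geometric mean of the two sides, produces the exponent $\frac{\ell_1,\ell_2}{2}$ appearing in the statement; bounded remainder factors are absorbed using Lemma~\ref{sigma go a away} and Proposition~\ref{preimage and gaps adjacent}.

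From the master inequality the three assertions then follow by an elementary analysis. The uniform bound $\alpha_n^{\frac{\ell_1,\ell_2}{2}}<0.55$ is obtained by specialising $\alpha_{n+1}^{\frac{\ell_1,\ell_2}{2}}$ to its smallest admissible value inside $\Phi\leq C_k$; the ``every other'' bound $<0.3$ comes from noting that two consecutive large values would contradict monotonicity of $\Phi$; and the third dichotomy is the residual case analysis: if $\alpha_n^{\frac{\ell_1,\ell_2}{2}}>0.3$, then $\Phi\leq C_k$ forces either $\alpha_n^{\frac{\ell_1,\ell_2}{2}}<0.44$ or $\alpha_{n+1}^{\frac{\ell_1,\ell_2}{2}}<0.16$.

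The main technical obstacle lies in deriving the master inequality with the sharp numerical constants. Obtaining the explicit values $0.55$, $0.44$, $0.3$, $0.16$ requires careful tracking of every universal constant that enters the recursive estimate---the $K_i$ of Fact~\ref{fact}, the residual factor from Lemma~\ref{sigma go a away}, and the Koebe distortion of Proposition~\ref{koebe principle}. Ensuring that $C_k$ stays strictly below one along the whole orbit, rather than degrading as the orbit length grows, is precisely where~\textbf{(\ref{add assumpt 1})} is used in an essential way, and is the delicate point of the whole proof.
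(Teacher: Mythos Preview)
Your outline misidentifies the mechanism that produces the exponent $\tfrac{\ell_1,\ell_2}{2}$, and as a consequence the route you describe cannot deliver the specific numerical constants. In the paper one applies $f$ \emph{once} at the appropriate endpoint of $U$ (this is the sole use of Fact~\ref{fact} and \textbf{(\ref{add assump 2})}) to write the cross-ratio $\po(\underline{-q_n+kq_{n-1}+1},\underline{-q_{n-1}+1})$ explicitly as a rational function of three variables, namely $\beta_n(k)^{\ell_1,\ell_2}$, $\alpha_{n-1}^{\ell_1,\ell_2}$, and an auxiliary ratio $\gamma=\gamma_n^{(\ell_1|\ell_2)}(k)$. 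The negative-Schwarzian expansion of $\po$ under $f^{q_{n-1}-1}$ (no accumulating constant is needed here: one simply uses $\D\po\ge 1$) then gives
\[
\frac{(\beta_n(k)^{\ell_1,\ell_2}+\alpha_{n-1}^{\ell_1,\ell_2}\gamma)(1+\gamma)}{(1+\alpha_{n-1}^{\ell_1,\ell_2}\gamma)(\beta_n(k)^{\ell_1,\ell_2}+\gamma)}\le s_n\,\beta_n(k+1).
\]
The half-exponent appears only after one \emph{minimises} the left side over the free variable $\gamma$: the minimum is at $\gamma^2=\beta_n(k)^{\ell_1,\ell_2}/\alpha_{n-1}^{\ell_1,\ell_2}$, and the minimised inequality is
\[
\left(\frac{\beta_n(k)^{\frac{\ell_1,\ell_2}{2}}+\alpha_{n-1}^{\frac{\ell_1,\ell_2}{2}}}{1+\beta_n(k)^{\frac{\ell_1,\ell_2}{2}}\alpha_{n-1}^{\frac{\ell_1,\ell_2}{2}}}\right)^{2}\le s_n\,\beta_n(k+1).
\]
So the $\tfrac12$ is an artefact of an optimisation, not of any ``geometric mean of the two sides'' of $U$; the same $\tfrac12$ already occurs in the symmetric case $\ell_1=\ell_2$ of \cite{5}, where there is nothing to average.

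Equally, the constants $0.3$, $0.44$, $0.55$, $0.16$ are not obtained by tracking the $K_i$ of Fact~\ref{fact} or a Koebe constant. They come from the pure algebra of the displayed inequality together with $s_n\to 1$: solving the quadratic for $\min\{\beta_n(k)^{\frac{\ell_1,\ell_2}{2}},\alpha_{n-1}^{\frac{\ell_1,\ell_2}{2}}\}$ produces the function $h(z)=\sqrt{z}/(1+\sqrt{1-z})$, and $0.3$ is essentially its fixed point, while $0.55\approx\sqrt{0.3}$ and the pair $(0.44,0.16)$ are read off from the same quadratic via the elementary bound $\tfrac{x+y}{1+xy}\ge\max\{x,y\}$. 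Finally, the ``every other'' subsequence is built by a concrete selection rule that uses the whole chain $\beta_n(0),\dots,\beta_n(a_{n-1})$, deciding at each step whether $\alpha_{n-1}^{\frac{\ell_1,\ell_2}{2}}$ realises the minimum for some $k$; a two-variable master inequality $\Phi(\alpha_n,\alpha_{n+1})\le C_k$ of the type you propose does not carry enough information to run this argument.
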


\begin{proof}
	For every $n\in\N$ and $k=0,1,\cdots a_{n-1} $, we define the parameter sequences 
	\begin{equation*}
	\gamma_{1,n}(k):=|(\underline{-q_{n}+kq_{n-1}}, \underline{0})|,\quad \gamma_{1,n-1}:=\gamma_{1,n-1}(0),\quad \gamma_n(k) = \dfrac{\gamma_{1,n}(k)}{\gamma_{1,n-1}}
	\end{equation*}	
	\begin{equation*}\gamma_n^{(\ell_1|\ell_2)}(k):=
	\dfrac{\gamma^{\eg}_{1,n}(k)}{\gamma^{\ed}_{1,n-1}} \hspace{0.9mm} \mbox{  if } \hspace{0.9mm} n\in 2\Z \hspace{1mm}\mbox{ and }\hspace{1mm}\gamma_n^{(\ell_1|\ell_2)}(k):=\dfrac{\gamma^{\ed}_{1,n}(k)}{\gamma^{\eg}_{1,n-1}}
	\hspace{0.9mm}\mbox{ if } \hspace{0.9mm} n\in 2\Z +1.
	\end{equation*}		
	These notations simplify the  formalization  of the following lemma which will play an important (essential) role in the proof of \textbf{Proposition~\ref{asymptotically}}.
	
	\begin{lem}\label{use additional condition}
		For $n$  large enough and  for every		$k=0,1,\cdots a_{n-1}-1 $, the following inequality holds  
		\begin{equation}\label{lemma inequality 1}
		\dfrac{(\beta_n(k)^{\ell_1,\ell_2}+ \alpha^{\ell_1,\ell_2}_{n-1}\gamma_n^{(\ell_1|\ell_2)}(k)  )(1+\gamma_n^{(\ell_1|\ell_2)}(k)  )}{(1+ \alpha^{\ell_1,\ell_2}_{n-1}\gamma_n^{(\ell_1|\ell_2)}(k))(\beta_n(k)^{\ell_1,\ell_2}+\gamma_n^{(\ell_1|\ell_2)}(k) ) }\leq s_n\beta_n(k+1).
		\end{equation}

	\end{lem}
	
	\begin{proof} Let  $n$ be an even non negative integer large enough.
		For fixed 	$k=0,1,\cdots a_{n-1}-1,$ according to the assumption \textbf{(\ref{add assump 2})}, the left hand side of 	\textbf{(\ref{lemma inequality 1})} is equal to the cross-ratio 		
		\begin{equation*}
		\po\,(\underline{-q_{n}+kq_{n-1}+1}, \underline{-q_{n-1}+1}).  
		\end{equation*}
		Applying $f^{q_{n-1}-1} $, by expanding cross-ratio property, we get the inequality.	
	\end{proof}

	The left hand side  is a function of the three	variables $\beta_n(k)^{\ell_1,\ell_2} $, $\alpha^{\ell_1,\ell_2}_{n-1} $ and $\gamma_n^{(\ell_1|\ell_2)}.$ 
	Observe that the function increases monotonically with each of the first two variables. However, relatively to the third variable, the function reaches a minimum. To see this, take the logarithm of the function and
	check that the first derivative is equal to zero only when	
	\begin{equation*}
	(\gamma_n^{(\ell_1|\ell_2)})^2=\dfrac{\beta_n(k)^{\ell_1,\ell_2}}{\alpha^{\ell_1,\ell_2}_{n-1}}.
	\end{equation*}	
	By substituting this value for $\gamma_n^{(\ell_1|\ell_2)}$ in \textbf{(\ref{lemma inequality 1})}, we get that
	\begin{equation}\label{quadradic inquality}
	\left(\dfrac{\beta_n(k)^{\frac{\ell_1,\ell_2}{2}}+ \alpha^{\frac{\ell_1,\ell_2}{2}}_{n-1}}{ 1+\beta_n(k)^{\frac{\ell_1,\ell_2}{2}} \alpha^{\frac{\ell_1,\ell_2}{2}}_{n-1}}\right)^2\leq s_n\beta_n(k+1).
	\end{equation}
	Put:
	\begin{equation*}x_{n}(k):=\min\{\beta_n(k)^{\frac{\ell_1,\ell_2}{2}}, \alpha^{\frac{\ell_1,\ell_2}{2}}_{n-1}    \} \quad\mbox{ and } \quad y_{n}(k):=\beta_n(k)^{\frac{\ell_1,\ell_2}{2}}	.	\end{equation*}
	
	Since $\beta_n(k+1)\leq y_{n}(k+1) $, substituting the above variable into
	\textbf{(\ref{quadradic inquality})} gives rise to a quadratic inequality in $x_{n}(k) $ whose only root in the interval $(0,1)$ is given by
	\begin{equation*} \dfrac{ \sqrt{s_n  y_{n}(k+1)} }{1+\sqrt{1-s_n  y_{n}(k+1)}};	\end{equation*}
	that is,
	\begin{equation}\label{inequation min} x_{n}(k)=\min\{\beta_n(k)^{\frac{\ell_1,\ell_2}{2}}, 
	\alpha^{\frac{\ell_1,\ell_2}{2}}_{n-1}\}\leq\dfrac{\sqrt{s_n y_{n}(k+1)}}{1+\sqrt{1-s_n y_{n}(k+1)}}.	\end{equation}

	
	\begin{lem}\label{subsequence}
		There is a subsequence of $\alpha_n$ including at least every other $\alpha_n$,
		such that 	
		\begin{equation*}	\limsup \alpha_{n}^{\frac{\ell_1,\ell_2}{2}}\leq 0.3.	\end{equation*}	
	\end{lem}
	\begin{proof}
		We use the following elementary lemma in \cite{5}.
		\begin{lem} \label{function h}
			The function 
			\begin{equation*} 
			h_n(z)=\dfrac{ \sqrt{s_n z }}{1+\sqrt{1-s_n z }}		
			\end{equation*}
			moves points to the left, $h(z)<z$, if $z\geq 0.3$ and $n$ is large enough. 
		\end{lem}
		We select the subsequence.
		\begin{enumerate}
			\item \textbf{The initial term:}
			there exists $n-2\in\N$, such that $\alpha_{n-2}^{\frac{\ell_1,\ell_2}{2}}\leq 0.3$. This comes directly from the properties of the function $h_n$ (\textbf{Lemma~\ref{function h}}) and from \textbf{(\ref{inequation min})}. 
			\item \textbf{The next element:} suppose  that $\alpha_{n-2} $ has been selected. If 
			\begin{equation*}
			x_{n}(k)=\alpha_{n-1}^{\frac{\ell_1,\ell_2}{2}}\quad\mbox{or }\quad\alpha_{n-1}^{\frac{\ell_1,\ell_2}{2}} \leq 0.3,	\end{equation*}
			for some $k=0,1,\cdots a_{n-1}-1, $
			then, we select $\alpha_{n-1} $ as the next term. Otherwise, $\alpha_{n} $
			is the next term. Thus, the sequence is constructed.
		\end{enumerate}
	\end{proof}
	
	\begin{cor}
		For the whole sequence $(\alpha_n)$ we have 
		\begin{equation*}	\limsup \alpha_{n}^{\frac{\ell_1,\ell_2}{2}}\leq 0.3	\end{equation*}	
		Moreover, 
		\begin{description}
			\item[-] if $\alpha_{n-1} $ does not belong to the subsequence $(\alpha_n)$  defined by the 
			\textbf{Lemma~\ref{subsequence}} then either
			\begin{equation*}	\alpha_{n-1}^{\frac{\ell_1,\ell_2}{2}}<0.44\quad \mbox{or}\quad 	\alpha_{n}^{\frac{\ell_1,\ell_2}{2}}<0.16 	\end{equation*}							
		\end{description}
	\end{cor}
	\begin{proof}
		Observe that the function 
		\begin{equation*}
		H:(s,t)\in\R_+^2\mapsto F(s,t)=\dfrac{s+t}{1+st}
		\end{equation*}
		is symmetric and for fixed $s$, the function $F(s,\cdot)$
		reaches its minimum in zero by taking the value $s$. Therefore, for every $s,t\geq 0$,
		\begin{equation*}
		s,t\leq\dfrac{s+t}{1+st}. 
		\end{equation*}
		So,
		\begin{equation} \label{inquality alpha_n alpha_n-1}
		\alpha_{n}^{\frac{\ell_1,\ell_2}{2}}, \;\alpha_{n-1}^{\frac{\ell_1,\ell_2}{2}}\leq \dfrac{\alpha_{n}^{\frac{\ell_1,\ell_2}{2}}+ \alpha_{n-1}^{\frac{\ell_1,\ell_2}{2}}}{ 1+\alpha_{n}^{\frac{\ell_1,\ell_2}{2}} \alpha_{n-1}^{\frac{\ell_1,\ell_2}{2}}}.
		\end{equation}
		Thus, according to that $\alpha_{n-2} $ is an element of the sequence and  suppose that $\alpha_{n-1} $ do not belong to the previous subsequence of the \textbf{Lemma~\ref{subsequence}}, then,  it follows from \textbf{(\ref{quadradic inquality})}  that the right member of \textbf{(\ref{inquality alpha_n alpha_n-1})}  is estimated as following
		\begin{equation}\label{estimation ratio of quadratic}
		\dfrac{\alpha_{n}^{\frac{\ell_1,\ell_2}{2}}+ \alpha_{n-1}^{\frac{\ell_1,\ell_2}{2}}}{ 1+\alpha_{n}^{\frac{\ell_1,\ell_2}{2}} \alpha_{n-1}^{\frac{\ell_1,\ell_2}{2}}}\leq\sqrt{s_n\beta_n(1)^{\frac{\ell_1,\ell_2}{2}}}\approx \sqrt{\beta_n(1)^{\frac{\ell_1,\ell_2}{2}}} \leq\sqrt{ 0.3}.
		\end{equation}
		Also,
		\begin{equation}\label{alphan is min}
		\min\{\alpha_{n}^{\frac{\ell_1,\ell_2}{2}}, \alpha_{n-1}^{\frac{\ell_1,\ell_2}{2}}   \}=\alpha_{n}^{\frac{\ell_1,\ell_2}{2}}\leq 0.3.	\end{equation}
		Thus, if   $\alpha_{n}^{\frac{\ell_1,\ell_2}{2}}\geq 0.16,  $
		then by combining this with \textbf{(\ref{estimation ratio of quadratic})} and \textbf{(\ref{alphan is min})}, we obtain the desired estimate.   \end{proof}
	The \textbf{Proposition~\ref{asymptotically}} is proved.
\end{proof}

\subsubsection{Recursive formula of  $\alpha_n$}

\begin{prop}\label{recurrence formula}
	Let $n $ be integer large enough,
	\begin{enumerate}
		\item if $\ell_{1},\,\ell_{2}>1$,  we have  
		\begin{equation} \label{inequation recurrence formula}
		\alpha_{2n}^{\ell_{1}}\leq M_{2n}(\ell_{1})\alpha^{2}_{2n-2}\quad\mbox{and} \quad
		\alpha_{2n+1}^{\ell_{2}}\leq M_{2n+1}(\ell_{2})\alpha^{2}_{2n-1};
		\end{equation}		
		where,\begin{equation*}
		M_n(\ell)=s^2_{n-1}\cdot \dfrac{2}{\ell}\cdot \dfrac{1}{1+\sqrt{1-\dfrac{2(\ell-1)}{\ell}s_{n-1}\alpha_{n-1}}}
		\cdot \dfrac{1}{1-\alpha_{n-2}}\cdot\frac{\sigma_{n}}{\sigma_{n-2}} .
		\end{equation*}
		\item if $\ell_{1}=\ell_{2}=1$, then
		\begin{equation} \label{inequation recurrence formula1}
		\alpha_{n}\leq W^1_{n}\cdot\frac{\sigma_{n}}{\sigma_{n-2}}\alpha_{n-2}.
		\end{equation}		
	\end{enumerate}	
\end{prop}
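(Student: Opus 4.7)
The plan is to derive the two recursive inequalities by a cross-ratio argument in the same spirit as Lemma~\ref{use additional condition}, but arranged so that the comparison is made between the scale $q_n$ and the scale $q_{n-2}$ rather than within two consecutive levels. The natural iterate to use is one that pulls a configuration near $\underline{-q_n}$ back to a configuration near $\underline{-q_{n-2}}$; by the recursion $q_n=a_{n-1}q_{n-1}+q_{n-2}$ this is accomplished by following how $f^{q_{n-1}}$ moves a quadruple built from the preimages $\underline{-q_n},\ \underline{-q_{n-2}},\ \underline{0}$ and a nearby landmark (for instance $\underline{-q_n+q_{n-1}}$ or the $(k{=}a_{n-1}{-}1)$ instance of $\beta_n(k)$ from the previous subsection). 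Because such a pull-back must traverse one critical passage through the flat piece, exactly one factor of the critical exponent $\ell_i$ is introduced by assumption~\textbf{(\ref{add assump 2})} on the appropriate side of $U$, which explains why the parity of $n$ picks out $\eg$ or $\ed$.

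First I would write down the $\po$-cross-ratio of the chosen quadruple, apply Proposition~\ref{CRI} (invoking the Koebe principle of Proposition~\ref{koebe principle} together with Proposition~\ref{qn go to zero} to absorb the diffeomorphic distortion into a uniform constant) and then use~\textbf{(\ref{add assump 2})} to convert the image lengths near $U$ into $\ell_i$-th powers. After identifying the resulting ratios with the parameters $\alpha_{n-2},\,\alpha_{n-1},\,\sigma_n/\sigma_{n-2}$ and $s_{n-1}$, the inequality should take the form of a quadratic in $\alpha_n^{\ell_i/2}$ (or equivalently in $\alpha_n^{\ell_i}$ after the substitution) with coefficients involving those parameters; the discriminant is $1-\frac{2(\ell_i-1)}{\ell_i}s_{n-1}\alpha_{n-1}$, which is nonnegative for $n$ large thanks to the a priori bound $\limsup\alpha_n^{\ell_i/2}\le 0.3$ from Proposition~\ref{asymptotically}. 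Taking the smaller root $\frac{2/\ell_i}{1+\sqrt{1-\Delta}}$ in the standard way produces precisely the factor
\begin{equation*}
\tfrac{2}{\ell_i}\cdot\dfrac{1}{1+\sqrt{1-\tfrac{2(\ell_i-1)}{\ell_i}s_{n-1}\alpha_{n-1}}}
\end{equation*}
appearing in $M_n(\ell_i)$, and the $\frac{1}{1-\alpha_{n-2}}$ emerges from rewriting $|[\underline{-q_{n-2}},\underline{0})|=|\underline{-q_{n-2}}|+|(\underline{-q_{n-2}},\underline{0})|$ in terms of $\alpha_{n-2}$.

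For the degenerate exponent case $\ell_1=\ell_2=1$, the critical passage no longer introduces any nonlinearity: the contribution of~\textbf{(\ref{add assump 2})} is linear, so the step that previously yielded a quadratic now yields a first-order inequality. The square-root term disappears, the constants $s_{n-1},\sigma_n,\sigma_{n-2}$ combine into a single factor $W^{1}_n$, and one reads off $\alpha_n\le W^{1}_n\cdot(\sigma_n/\sigma_{n-2})\alpha_{n-2}$ directly.

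The main obstacle I expect is the bookkeeping required to choose the quadruple so that (i) its intervals $(b_i,c_i)$ avoid $U$ along the whole orbit of length $q_{n-1}$, as demanded by hypothesis~\ref{hypothesis 2 of cri} of Proposition~\ref{CRI}, (ii) only one critical passage is incurred so that the exponent entering the final inequality is the single $\ell_i$ dictated by the parity of $n$, and (iii) the constants arising from the cross-ratio expansion match exactly the $s_{n-1}^2,\,\sigma_n/\sigma_{n-2},\,(1-\alpha_{n-2})^{-1}$ of the statement. Proposition~\ref{preimage and gaps adjacent} and Lemma~\ref{sigma go a away} together with Proposition~\ref{partition of f} should supply the comparability needed to carry out this identification cleanly.
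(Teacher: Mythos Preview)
Your plan has the right scaffolding (apply $f$ once to pick up the exponent $\ell_i$, then use the expanding \po\ under $f^{q_{n-1}-1}$ repeatedly to climb from level $n$ to level $n-2$), but the origin of the square-root factor in $M_n(\ell)$ is misidentified, and this is a genuine gap.

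In the paper the argument does \emph{not} produce a quadratic in $\alpha_n^{\ell_i}$. After the chain of \po-expansions one arrives at
\[
\alpha_n^{\ell_1}\le s_n\,\nu_{n-2}\,\mu_{n-2}\,\alpha_{n-2}^2,
\qquad
\mu_{n-2}:=\dfrac{|(\underline{-q_{n-2}},\,\underline{a_{n-1}q_{n-1}})|}{|(\underline{-q_{n-2}},\,\underline{0})|},
\]
and the square-root factor comes from bounding the auxiliary ratio $\mu_{n-2}$. This requires a separate step: one applies $f$ to the two intervals defining $\mu_{n-2}$, uses the elementary power-function inequality
\[
\dfrac{x^\ell-y^\ell}{x^\ell}\ \ge\ \dfrac{x-y}{x}\Bigl[\ell-\tfrac{\ell(\ell-1)}{2}\,\dfrac{x-y}{x}\Bigr]\qquad(1<\ell\le 2),
\]
and then one more \po-expansion under $f^{q_{n-2}}$. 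This yields a genuine quadratic inequality in $\mu_{n-2}$, namely $\mu_{n-2}\bigl(\ell_1-\tfrac{\ell_1(\ell_1-1)}{2}\mu_{n-2}\bigr)\le s_{n-1}\sigma_n\sigma_{n-1}$, whose solution is exactly the factor $\tfrac{2}{\ell_1}\bigl(1+\sqrt{1-\tfrac{2(\ell_1-1)}{\ell_1}s_{n-1}\alpha_{n-1}}\bigr)^{-1}$. Your sketch omits both this lemma and the quantity $\mu_{n-2}$ entirely, and instead expects the quadratic to appear directly in $\alpha_n$; it will not.

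Two secondary points. First, the case $a_{n-1}>1$ needs an explicit telescoping through $k=1,\dots,a_{n-1}-1$; the paper does this with the Mean Value Theorem on $x\mapsto x^{\ell_1}$ to compare $\delta_n(k)$ with $\delta_n(k+1)$, and the derivative ratio $v_{a_{n-1}}/u_1$ is what ultimately produces $\nu_{n-2}$ and hence the $(1-\alpha_{n-2})^{-1}$ factor. Second, appealing to the Koebe principle to ``absorb the distortion into a uniform constant'' would at best give $\alpha_n^{\ell_i}\le C\,\alpha_{n-2}^2$; it cannot recover the precise expression for $M_n(\ell)$, which is needed later when one checks $W_n'(2)<1$ numerically. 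The paper never uses Koebe here---only the exact expansion of \po\ under negative Schwarzian (Proposition~\ref{constant cross ratio inequality}) and the combinatorics of the partition.
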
	
\begin{proof} We treat the case $n$ even and the case $n$ odd is treated in a similar way. Recall that, 
	\begin{equation*}
	\alpha_n=\dfrac{|(\underline{-q_n}, \underline{0})|}{|[\underline{-q_n}
		, \underline{0})|}.
	\end{equation*}
	For every $n$ even large enough, by  \textbf{Proposition~\ref{qn go to zero}} and point 2 of  \textbf{Fact~\ref{fact}}, applying $f$ to the equality, we get
	\begin{equation*}
	\alpha^{\ell_{1}}_n=\dfrac{|(\underline{-q_n+1}, \underline{1})|}{|[\underline{-q_n+1}
		, \underline{1})|}.
	\end{equation*}
	which is certainly less than the cross-ratio
	\begin{equation*}
	\po(\underline{-q_n+1}, (\underline{1},\underline{-q_{n-1}+1}]).
	\end{equation*}
	Since the cross-ratio \po\, is expanded by $f^{q_{n-1}-1} $, then,
	\begin{equation}\label{rn4}
	\alpha^{\ell_{1}}_n< \delta_n(1)s_n(1);
	\end{equation}
	with,
	\begin{equation*}
	\delta_n(k):=\dfrac{|(\underline{-q_{n}+kq_{n-1}}, \underline{kq_{n-1}})|}{|[\underline{-q_{n}+kq_{n-1}}
		, \underline{kq_{n-1}})|}.
	\end{equation*}
	and 
	\begin{equation*}
	s_n(k):=\dfrac{|[\underline{-q_{n}+kq_{n-1}}, \underline{0}]|}{|(\underline{-q_{n}+kq_{n-1}}
		, \underline{0}]|}.
	\end{equation*}
	
	If $a_{n-1}=1$, 
	by multiplying and dividing the right member of \textbf{(\ref{rn4})} by 
	$\alpha^2_{n-2} ,$ we obtain directly \textbf{(\ref{rn5})}. 
	
	Suppose that $a_{n-1}>1$ and 
	estimate $\delta_n(k)$. By the Mean Value Theorem (Lagrange), $f$ transforms the intervals
	defining the ratio $\delta_n(k)$ into a pair whose ratio is
	\begin{equation*}\dfrac{u_k}{v_k}\delta_n(k)\end{equation*}
	with $u_k$ being the derivative of $f\,(x^{\ell_{1}})$ at a point in the interval
	\begin{equation*}
	U_k:=(\underline{-q_{n}+kq_{n-1}}, \underline{kq_{n-1}}),
	\end{equation*}
	and
	$v_k$ being the derivative of $f\,(x^{\ell_{1}})$ at a point in the interval
	\begin{equation*}
	V_k:=[\underline{-q_{n}+kq_{n-1}}
	, \underline{kq_{n-1}}).
	\end{equation*}
	Note that, for $n$ sufficiently large, 
	\begin{equation*}
	u_1<v_1<u_2<v_2\cdots <v_{a_{n-1}}.
	\end{equation*}
	
	We see that the image of $\delta_n(k)$  by $f$ is smaller than:
	\begin{equation*}
	\po(\underline{-q_n+kq_{n-1}+1}, (\underline{kq_{n-1}+1},\underline{-q_{n-1}+1}]).
	\end{equation*}
	Once again, by  expanding cross-ratio property $(f^{q_{n-1}-1}) $, it follows that,
	\begin{equation}\label{delta_n s_n k}\dfrac{u_k}{v_k}\delta_n(k)\leq s_n(k+1)\cdot\delta_n(k+1). \end{equation}
	Multiplying \textbf{(\ref{delta_n s_n k})} for $k=0,..., a_{n-1}-1$, and substituting the resulting estimate of $\delta_n(1)$ into  \textbf{(\ref{rn4}) }, we obtain:
	\begin{equation*}\alpha^{\ell_{1}}_n\leq \delta_n(a_{n-1})\dfrac{v_{a_{n-1}}}{u_1} s_n(1)\cdots s_n(a_{n-1}). \end{equation*}
	Observe that, $s_n(1)\cdots s_n(a_{n-1})<s_n$ and 
	\begin{equation*}\dfrac{v_{a_{n-1}}}{u_1}\leq\left(\dfrac{|(\underline{-q_{n-2}},\underline{0})|}{|(\underline{q_{n-1}}, \underline{0})|}\right)^{\ell_1-1}
	\leq \dfrac{|(\underline{-q_{n-2}},\underline{0})|}{|(\underline{q_{n-1}}, \underline{0})|}.
	\end{equation*}
	Thus,
	\begin{equation*}\alpha^{\ell_{1}}_n\leq s_n\cdot \dfrac{|(\underline{-q_{n-2}},\underline{0})|}{|(\underline{q_{n-1}}, \underline{0})|}\cdot \delta_n(a_{n-1});
	\end{equation*}
	which can be rewritten in the form
	\begin{equation}
	\alpha^{\ell_{1}}_n\leq s_n\nu_{n-2} \mu_{n-2} \alpha^2_{n-2};\label{rn5}
	\end{equation}
	with,
	\begin{equation*}
	\nu_{n-2}:=\dfrac{|[\underline{-q_{n-2}}, \underline{0})|}{|(\underline{q_{n-1}}
		, \underline{0})|} \cdot \dfrac{|[\underline{-q_{n-2}}, \underline{0})|}{|[\underline{-q_{n-2}}
		, \underline{a_{n-1}q_{n-1}})|}
	\end{equation*}
	and
	\begin{equation*}
	\mu_{n-2} :=\dfrac{|(\underline{-q_{n-2}}, \underline{a_{n-1}q_{n-1}})|}{|(\underline{-q_{n-2}}
		, \underline{0})|}.
	\end{equation*}

	It remains to estimate $\nu_{n-2}$ and $\mu_{n-2}$ to end this part.
	For $\nu_{n-2}$, observe that,
	\begin{equation*}
	|(\underline{-q_{n-2}}, \underline{0})|\leq |(\underline{q_{n-3}}, \underline{0})|
	\end{equation*}
	so that,
	\begin{equation}\label{rn6}
	\nu_{n-2}\leq\dfrac{    1  }{\sigma_{n-1} \sigma_{n-2}}\cdot \dfrac{1}{1-\alpha_{n-2}}
	\end{equation}	
	The estimation of $\mu_{n-2}$ is facilitated by the following elementary lemma in \cite{5}.
	\begin{lem}\label{rn7}
		Let $\ell\in(1,2)$. For all numbers $x>y$, we have the following inequality:
		\begin{equation*}
		\dfrac{x^\ell-y^\ell}{x^\ell}\geq \left( \dfrac{x-y}{x}\right)\left[\ell- \dfrac{\ell(\ell-1)}{2}\left(\dfrac{x-y}{x}\right)\right].
		\end{equation*}
		
	\end{lem}
	
	Now, apply $f$ into the intervals defining the ratio $\mu_{n-2} $. By \textbf{Lemma~\ref{rn7}}, the resulting ratio is larger than

	\begin{equation*}
	\mu_{n-2}( \ell_{1}-\dfrac{\ell_{1}(\ell_{1}-1)}{2} \mu_{n-2}).
	\end{equation*}
	The cross-ratio \po
	\begin{equation*}
	\po\:(-q_{n-2}+1, (\underline{q_{n-1}+1}, \underline{1}));
	\end{equation*}
	that is,
	\begin{equation*}
	\dfrac{|(\underline{-q_{n-2}+1},  \underline{q_{n-1}+1})| |[\underline{-q_{n-2}+1}, \underline{1})|}
	{|[\underline{-q_{n-2}+1},  \underline{q_{n-1}+1})| |(\underline{-q_{n-2}+1}, \underline{1})|}
	\end{equation*}
	is larger again. Thus, by expanding cross-ratio  property on $f^{q_{n-2}} $, we obtain:
	\begin{equation}\label{rn8}
	\mu_{n-2}( \ell_{1}-\dfrac{\ell_{1}(\ell_{1}-1)}{2} \mu_{n-2})\leq s_{n-1}\sigma_{n}\sigma_{n-1}.
	\end{equation}
	By solving this quadratic inequality, we obtain
	\begin{equation}\label{rn9}
	\mu_{n-2}\geq \dfrac{1+\sqrt{1-\dfrac{2}{\ell_{1}}(\ell_{1}-1) s_{n-1}\sigma_{n}\sigma_{n-1}}}{\ell_{1}-1}.
	\end{equation}
	Thus, by combining the  \textbf{(\ref{rn8})} and \textbf{(\ref{rn9})}, we obtain
	\begin{equation}\label{rn11}
	\mu_{n-2}<\dfrac{2}{\ell_{1}}\cdot \dfrac{1}{1+\sqrt{1-\dfrac{2(\ell_{1}-1)}{\ell_{1}} s_{n-1}\sigma_{n}\sigma_{n-1}}}
	s_{n-1}\sigma_{n}\sigma_{n-1}.
	\end{equation}
	Since, $\sigma_{n}\sigma_{n-1}<\alpha_{n-1} $, the first inequality in \textbf{(\ref{inequation recurrence formula})}  follows  by combining the inequalities
	\textbf{(\ref{rn5})}, \textbf{(\ref{rn6})} and \textbf{(\ref{rn11})}. Likewise, the second inequality in \textbf{(\ref{inequation recurrence formula})}
	is obtained by following suitably the same reasoning as previously. 
	
\end{proof}

\subsubsection{  $\alpha_n$ go to zero}
If $\ell_{1}=\ell_{2}=1 $, then by \textbf{Proposition~\ref{asymptotically}}, $\prod_{k=2}^{k=n}W^1_{k} $ goes to zero; thus, by composing the inequality obtained 
by $f$, since $f(\sigma_n)$ is bounded ( \textbf{Lemma~\ref{sigma go a away}} ), then the result follows.

Note that, the cases where  the critical exponents are of the form $(1, \ell)$ or $(\ell, 1)$; with $\ell>1$, are treated  in \cite{4}.

Now,  Let us suppose that  $\ell_{1},\ell_{2}>1$.

Technical reformulation of the \textbf{Proposition~\ref{recurrence formula}}.

Let  $W_n$ be a sequence  defined by
\begin{equation*}
M_n(\ell)=W_n(\ell)\dfrac{\sigma_{n}}{\sigma_{n-2}}.
\end{equation*}
Let 
\begin{equation*}
M'_n(\ell): =M_n(\ell)\alpha_{n-2}^{2-\ell} \quad\mbox{and}\quad W'_n(\ell): =W_n(\ell)\alpha_{n-2}^{2-\ell}
\end{equation*}
The recursive formula \textbf{(\ref{inequation recurrence formula})} can be written for $n$ even  in the form:
\begin{equation*}
\alpha_{n}^{\ell_1} \leq W'_n(\ell_1)\dfrac{\sigma_{n}}{\sigma_{n-2}}\alpha_{n-2}^{\ell_1}.
\end{equation*}
so, 
\begin{equation*}
\alpha_{n}^{\ell_1} \leq \prod_{k=2}^{k=n} W'_k(\ell_1)\dfrac{\sigma_{n}}{\sigma_{0}}\alpha_{0}^{\ell_1}.
\end{equation*}

$\prod_{k=2}^{k=n} W'_k(\ell_1)$ goes to zero

Observe that, the size of $	W'_n(\eg)$ is given by the study of the function	
\begin{equation*}
W'_n(x,y,\eg)= \dfrac{1}{	\dfrac{\eg}{2}+	\dfrac{\eg}{2}\sqrt{1-\dfrac{2(\eg-1)}{\eg}x^{\frac{2}{\ed}}}}
\cdot \frac{y^{\frac{4}{\eg}-2}}{1-y^{\frac{2}{\eg}}}
\end{equation*}
The meaning of variation of  $W'_n(x,y,\eg)$ relative to the third variable is given by the following lemma in \cite{5}).
\begin{lem}\label{lemma for conclusion}
	For any $0<y<\frac{1}{\sqrt{e}}$, $x\in(0,1)$ and $\eg\in(1,2]$  the function $	W'_n(x,y,\eg)$ is increasing with respect to $\eg$.
\end{lem}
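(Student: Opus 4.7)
The idea is to split $W'_n$ into two factors depending on $\eg$ through simple expressions, estimate each logarithmic derivative, and close the argument using the calibration $y<1/\sqrt e$. Setting $t:=x^{2/\ed}\in(0,1)$ (constant in $\eg$ since $\ed$ is held fixed), I would write $W'_n(x,y,\eg)=A(\eg,t)\cdot B(\eg,y)$ with
\begin{equation*}
A(\eg,t):=\frac{2}{\eg\bigl(1+\sqrt{1-\tfrac{2(\eg-1)}{\eg}t}\bigr)},\qquad B(\eg,y):=\frac{y^{4/\eg-2}}{1-y^{2/\eg}},
\end{equation*}
and reduce the lemma to proving the strict inequality $\partial_\eg\ln A+\partial_\eg\ln B>0$.

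For $A$, set $g(\eg):=\sqrt{1-\tfrac{2(\eg-1)t}{\eg}}$ so that $1/A=\tfrac{\eg}{2}(1+g)$ and compute $g'(\eg)=-t/(\eg^{2}g)$. A direct differentiation then gives
\begin{equation*}
-\partial_\eg\ln A=\frac{1}{\eg}-\frac{t}{\eg^{2}g(1+g)}\le\frac{1}{\eg}.
\end{equation*}
For $B$, differentiating $\ln B=(4/\eg-2)\ln y-\ln(1-y^{2/\eg})$ and collecting terms produces
\begin{equation*}
\partial_\eg\ln B=-\frac{2\ln y}{\eg^{2}}\cdot\frac{2-y^{2/\eg}}{1-y^{2/\eg}},
\end{equation*}
which is already strictly positive since $\ln y<0$. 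These two steps isolate the real content: $B$ alone is increasing in $\eg$ while $A$ genuinely can be decreasing, so the whole point is to check that the growth of $B$ dominates the possible decay of $A$.

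Combining the two bounds reduces the lemma to the elementary inequality
\begin{equation*}
\eg<(-2\ln y)\cdot\frac{2-y^{2/\eg}}{1-y^{2/\eg}}.
\end{equation*}
I would verify this via two independent strict lower bounds on the right-hand side: the hypothesis $y<1/\sqrt e$ gives $-\ln y>1/2$, and $y^{2/\eg}\in(0,1)$ gives $\frac{2-y^{2/\eg}}{1-y^{2/\eg}}=1+\frac{1}{1-y^{2/\eg}}>2$, so the product is strictly greater than $2\ge\eg$. The main obstacle is not computational but conceptual, namely spotting the right splitting; once $t=x^{2/\ed}$ is frozen and the factors $A,B$ are isolated, every inequality above is short, and the threshold $1/\sqrt e$ is precisely what is needed to make the crude bound $-\partial_\eg\ln A\le 1/\eg$ sufficient to control the whole derivative.
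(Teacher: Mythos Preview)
Your argument is correct. The factorisation $W'_n=A\cdot B$ with $t=x^{2/\ed}$ frozen is legitimate because $\ed$ is held fixed while $\eg$ varies; the derivative computations for $\ln A$ and $\ln B$ are accurate (in particular $g^2=1-2t+2t/\eg$ gives $g'=-t/(\eg^2 g)$, and the $B$-computation yields exactly $-\tfrac{2\ln y}{\eg^2}\cdot\tfrac{2-y^{2/\eg}}{1-y^{2/\eg}}$); and the closing inequality $(-2\ln y)\cdot\tfrac{2-y^{2/\eg}}{1-y^{2/\eg}}>2\ge\eg$ follows from $y<1/\sqrt e$ and $y^{2/\eg}\in(0,1)$ just as you say. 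One small point worth noting is that $g$ is indeed real and positive on the whole range: for $\eg\in(1,2]$ one has $2(\eg-1)/\eg\in(0,1]$ and $t\in(0,1)$, so the radicand stays in $(0,1)$.

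As for the comparison: the paper does not give its own proof of this lemma at all --- it simply imports the statement from \cite{5}, where the symmetric case $\ell_1=\ell_2$ is treated. Your write-up is therefore a genuine addition rather than a paraphrase. It is also slightly cleaner than the original situation in \cite{5}, because there the exponent on $x$ is $2/\ell$ with the \emph{same} $\ell$ being varied, whereas here the exponent $2/\ed$ is constant in $\eg$; you exploit this by freezing $t$, which shortens the $A$-estimate to a one-line bound. The trade-off is that your crude inequality $-\partial_\eg\ln A\le 1/\eg$ throws away the (helpful) negative term $-t/(\eg^2 g(1+g))$, so the argument only closes because the hypothesis $y<1/\sqrt e$ happens to give exactly enough room; a sharper threshold on $y$ would require keeping that term.
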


\subsubsection*{Analyse the asymptotic size of $W'_i(2)$ }
Since the hypotheses of the \textbf{Lemma~\ref{lemma for conclusion}} are satisfied (\textbf{Proposition~\ref{asymptotically}}),
the only remaining point is the verification of the convergence of $\prod_{i=1}^{n}W'_i(2)$. 

\begin{description}
	\item[\textbf{-}]If $\alpha_{ n-2} < (0.3)^{\eg}$, then 
	$W'(2)<W'(0.55,0.16,2)<0,9$.
	\item[\textbf{-}] If not, then by the \textbf{Proposition~\ref{asymptotically}}, 	$W'(2)<W'(0.3,0.44,2)<0,98$
	or else,  $W_{n+1}'(2)W_{n}'(2)<W'(0.55,0.16,2)W'(0.16,0.55,2)<0,85$
\end{description}
\begin{cor}\label{alpha_n go to zero double exponentially}
	Let $\ell_1,\ell_2\in [1,2]$. If $1<\ell_1<2$ respectively $1<\ell_2<2$, then
	$\alpha_{2n} $ respectively 	$\alpha_{2n+1} $ goes to zero least double exponentially fast. And, if $\ell_1=2$ or $1$  respectively $\ell_2=2$ or $1$, then
	$\alpha_{2n} $ respectively 	$\alpha_{2n+1} $ goes to zero least  exponentially fast
\end{cor}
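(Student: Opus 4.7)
The plan is to extract the rate of decay directly from the recursive formula of Proposition~\ref{recurrence formula}, combined with the a priori bound of Proposition~\ref{asymptotically}. I will work throughout with the even-index subsequence $a_n := \alpha_{2n}$; the odd subsequence is handled by the exact same argument with $\ell_1$ replaced by $\ell_2$.

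For $1 < \ell_1 < 2$, Proposition~\ref{recurrence formula} gives $a_n^{\ell_1} \leq M_{2n}(\ell_1)\,a_{n-1}^2$, which I would rewrite as $a_n \leq C_n\,a_{n-1}^{\gamma}$ with exponent $\gamma := 2/\ell_1 > 1$. The first step is to check that $C_n := M_{2n}(\ell_1)^{1/\ell_1}$ is uniformly bounded in $n$; this reduces to the already-controlled factors $s_n$, $\sigma_n/\sigma_{n-2}$ and the square-root term appearing in the definition of $M_n$, together with the monotonicity of $W'_n$ in $\ell$ supplied by Lemma~\ref{lemma for conclusion}. Passing to $b_n := -\log a_n$, the recursion becomes $b_n \geq \gamma\,b_{n-1} - \log C$, and since Proposition~\ref{asymptotically} forces $b_n$ past any preassigned threshold once $n$ is large, an elementary induction gives $b_n \geq (\gamma - \varepsilon)^{\,n-n_0}\,b_{n_0}$ for every small $\varepsilon>0$. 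Since $\gamma - \varepsilon > 1$, this is precisely double exponential decay of $a_n$.

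For the boundary cases the mechanism degenerates in a controlled way. When $\ell_1 = 2$ the exponent $\gamma$ collapses to $1$, so the iteration only gives $a_n \leq C_n\,a_{n-1}$; however the two-step quantitative estimates $W'(2) < 0.9$ and $W'(0.55,0.16,2)\cdot W'(0.16,0.55,2)<0.85$ already produced in the analysis of $W'_i(2)$ supply a uniform geometric contraction factor strictly smaller than $1$, yielding plain exponential decay. For $\ell_1 = 1$, the linear recursion $a_n \leq W^1_{2n}(\sigma_{2n}/\sigma_{2n-2})\,a_{n-1}$ of Proposition~\ref{recurrence formula}(2), combined with $\prod_k W^1_k \to 0$ and the boundedness of $f(\sigma_n)$ coming from Lemma~\ref{sigma go a away}, again produces exponential decay; the case of mixed exponents $(1,\ell)$ or $(\ell,1)$ is cited from \cite{4}.

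The main obstacle I expect is the bookkeeping in the $1<\ell_1<2$ case: one must start the iteration only after Proposition~\ref{asymptotically} has pushed $a_n$ into the regime where $\gamma\,b_{n-1}$ dominates the additive error $\log C$, otherwise the super-exponential rate is swallowed by the transient. Once the initial index $n_0$ is identified and $C_n$ is confirmed uniformly bounded along the relevant subsequence, the rest is a routine induction on $n$.
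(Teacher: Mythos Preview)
Your proposal follows essentially the same route as the paper: both extract the rate directly from the recursion $\alpha_{2n}^{\ell_1}\le M_{2n}(\ell_1)\,\alpha_{2n-2}^{2}$ of Proposition~\ref{recurrence formula}, after $\alpha_n\to 0$ has been secured via Proposition~\ref{asymptotically} and the $W'_i(2)$ analysis. Your log-linearization $b_n\ge\gamma\,b_{n-1}-\log C$ with $\gamma=2/\ell_1$ is exactly the computation hiding behind the paper's one-line conclusion ``there is $\lambda_0$ such that $\alpha_n\le\lambda_0^{(2/\ell_1)^{p_n}}$''; the boundary cases $\ell_1\in\{1,2\}$ are also handled identically.

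One point to tighten: you assert that $C_n=M_{2n}(\ell_1)^{1/\ell_1}$ is uniformly bounded because ``$\sigma_n/\sigma_{n-2}$'' is ``already-controlled''. Nothing prior in the paper bounds this particular ratio; Lemma~\ref{sigma go a away} controls $f(\sigma_n)$, not $\sigma_n/\sigma_{n-2}$, and for rotation numbers of unbounded type the ratio $|(\underline{q_{n-3}},\underline{0})|/|(\underline{q_{n-1}},\underline{0})|$ can be arbitrarily large. The paper's own proof is equally terse at this step (it passes from ``$\alpha_n\to 0$'' to ``$\prod M_i\to 0$'' without explanation), so you are not missing an ingredient the paper supplies; but if you want a fully rigorous write-up you should either restrict to bounded type, or argue via the telescoped form $\alpha_n^{\ell_1}\le\bigl(\prod_k W'_k\bigr)(\sigma_n/\sigma_0)\,\alpha_0^{\ell_1}$ together with a bound on $\sigma_n$ itself, rather than on the individual increments $\sigma_n/\sigma_{n-2}$.
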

\begin{proof} Let $n:=2p_n\in\N$.
	From the analysis of the asymptotic size of $W'_i(2)$, it follows that, when $n$ goes to infinity, $\prod_{i=0}^{n}W'_{i}(\ell_1)$ goes to zero and $\alpha_n $ does so. Therefore, 
	
	\begin{equation*}
	\prod_{i=0}^{n}M_i(\ell_1)
	\end{equation*}	
	goes to zero when $n$ goes to infinity. Thus, by the \textbf{Proposition~\ref{recurrence formula}}, for $n$ even, there is $\lambda_0$ such that
	\begin{enumerate}
		\item[-]if $1<\ell_1<2$,  
		\begin{equation*}\alpha_{n}\leq\lambda _0^{\left(\frac{2}{\ell_1}\right)^{p_{n}}}, \end{equation*}	 
		\item[-] and if $\ell_1=1,2$,
		\begin{equation*}\alpha_{n}\leq\lambda_0^{n}. \end{equation*}	
	\end{enumerate}
	The case $n$ odd is treated the same way.
\end{proof}

\subsection{ Proof of the second part of main result}

In this section we find a bounded geometry domain.

\subsubsection{Recursive Affine Inequality  of order two on $\alpha_n$}
Let 
\begin{equation*}
\kappa_n:=\dfrac{|(\underline{0}, \underline{q_{n}}) |}{|(\underline{0}, \underline{-q_{n-1}}) |}.
\end{equation*}	

\begin{rem}\label{rem tau alpha kappa}
	Since the point $\underline{q_{n-2}} $ lies in the gap between $\underline{-q_{n-1}} $ and\\ $\underline{-q_{n-1}+q_{n-2}} $ of the dynamical partition $\mathcal{P}_{n-2} $, then by the\\ \textbf{Proposition~\ref{preimage and gaps adjacent}}, $\tau_n/{\alpha_{n-1}}$ and $\kappa_n$ are comparable.	
\end{rem}

\begin{prop}{\label{kappa go away to 0}} For any bounded type rotation number, there is a uniform constant $K$ so that
	\begin{equation*}
	\kappa_{2n}>K\left(\alpha_{2n-1}\right)^{\dfrac{1-\ell_2^{a_{2n}+1}}{\ell_2-1}}\quad\mbox{and}\quad \kappa_{2n+1}>K\left(\alpha_{2n}\right)^{\dfrac{1-\ell_1^{a_{2n+1}+1}}{\ell_1-1}}. \end{equation*}
\end{prop}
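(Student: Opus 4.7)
The exponent $(1-\ell_2^{a_{2n}+1})/(\ell_2-1) = -(1+\ell_2+\cdots+\ell_2^{a_{2n}})$ on the right hand side is the partial sum of a geometric series, which strongly suggests a proof by iteration of $a_{2n}+1$ steps, with each step multiplying a local exponent by $\ell_2$ through the power-law $f\approx x^{\ell_2}$ near $r(U)$ coming from Fact \ref{fact}.

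My plan is first to reduce to a statement about $\tau_{2n}$ rather than $\kappa_{2n}$: by Remark \ref{rem tau alpha kappa}, $\kappa_{2n}$ is comparable to $\tau_{2n}/\alpha_{2n-1}$, so it suffices to establish a matching lower bound for $\tau_{2n}/\alpha_{2n-1}$. Next, I would introduce the $a_{2n}+1$ preimages $\underline{-q_{2n-1}-kq_{2n}}$ of $U$ for $k=0,1,\dots,a_{2n}$; for $k=0$ this is $\underline{-q_{2n-1}}$ and for $k=a_{2n}$ it is $\underline{-q_{2n+1}}$. These sit consecutively in the dynamical partition $\mathcal{P}^{2n-1}$ of Proposition \ref{partition of f} and approach $\underline{0}$ from the same side.

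The inductive step compares $|(\underline{-q_{2n-1}-kq_{2n}},\underline{0})|$ with $|(\underline{-q_{2n-1}-(k+1)q_{2n}},\underline{0})|$. Transporting these through an appropriate iterate of $f$ that lands near $U$, one uses Remark \ref{diffeo on an interval} to see that the iterate is a $C^3$-diffeomorphism on the relevant interval; Proposition \ref{koebe principle} then gives uniform distortion provided $\rho$ is bounded type (so that the total length of the orbit is bounded). The only non-trivial contribution comes from the single passage across the endpoint $r(U)$, where Fact \ref{fact} yields $f\approx x^{\ell_2}$ and hence raises the $\alpha_{2n-1}$-type ratio at step $k$ to the $\ell_2$-th power at step $k+1$. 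Iterating this relation and telescoping produces exactly the geometric sum $1+\ell_2+\cdots+\ell_2^{a_{2n}}$ in the exponent, which combined with Step~1 yields the claimed bound on $\kappa_{2n}$. The argument for $\kappa_{2n+1}$ is symmetric: the roles of $r(U)$ and $l(U)$ are swapped, and $\ell_1$ plays the role of $\ell_2$.

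The main obstacle will be the bookkeeping: correctly identifying which side of $U$ each intermediate preimage falls on (to know that the exponent picked up at each step is really $\ell_2$ and not $\ell_1$), and keeping the constants from the Cross-Ratio Inequality (Proposition \ref{CRI}) and the Koebe distortion independent of $n$ across all $a_{2n}+1$ iterations. Bounded type of the rotation number enters precisely here, both by capping $a_{2n}$ and by keeping $\sum_{i} |f^i(T)|$ uniformly bounded so that $\zeta(\varsigma,\alpha)$ in Proposition \ref{koebe principle} is uniform in $n$.
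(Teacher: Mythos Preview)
Your telescoping picture is the right one, and the chain of preimages $\underline{-q_{2n-1}-kq_{2n}}$ ($k=0,\dots,a_{2n}$) is exactly the combinatorial backbone the paper uses (with the relabelling $\underline{-q_{2n+1}+iq_{2n}}$, $i=a_{2n}-k$). Two points, however, separate your outline from a complete argument.

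First, the detour through $\tau_{2n}$ is unnecessary. The paper works with $\kappa$ directly: it introduces $\beta_n(i)=|(\underline{-q_n+iq_{n-1}},\underline0)|/|[\underline{-q_n+iq_{n-1}},\underline0)|$, observes via Proposition~\ref{preimage and gaps adjacent} that $\kappa_{n-1}$ is comparable to the product $\beta_n(1)\cdots\beta_n(a_{n-1}-1)$, and then telescopes. Remark~\ref{rem tau alpha kappa} is invoked only later, in the proof of Proposition~\ref{alpha n, n-1 n-2}, to pass from the bound on $\kappa$ to one on $\tau$; using it here would make the logic circular. Also, the case $a_n=1$ is disposed of separately and trivially before any iteration.

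Second, the inductive step cannot be done with a single application of Koebe in the way you describe. After the first $f$ (which contributes the power $\ell_2$ via Fact~\ref{fact}), one would like to apply $f^{q_{2n}-1}$ as a diffeomorphism on an interval $T\supset M$ with definite Koebe space on both sides. But any reasonable choice of $T$ has one side abutting $\underline{1}$, and under $f^{q_{2n}-1}$ that side is carried through $\overline U$ at intermediate times, so Remark~\ref{diffeo on an interval} fails on $T$. This is precisely why the paper proves the key step (Lemma~\ref{beta_n power ell}: $\beta_n(i)^{\ell_1,\ell_2}\ge K\,\beta_n(i+1)$) via the cross-ratio inequality: it breaks $f^{q_{n-1}-1}$ into $a_{n-2}$ blocks of $f^{q_{n-2}-1}$ followed by $f^{q_{n-3}-1}$, and after each block it \emph{discards} the outer interval that has landed on $\underline0$ (using point~\ref{rl4} of Fact~\ref{fact}) before reconstituting a new cross-ratio. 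The distortion control thus comes from Proposition~\ref{CRI}, not from Koebe. Your plan can be salvaged by adopting this block-and-discard scheme; what does not work is treating the whole passage as ``a single passage across the endpoint $r(U)$'' followed by one diffeomorphic iterate.
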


\paragraph{Proof of the Proposition:} 
If $a_n=1$, it comes down to showing that the sequence $\kappa_n$ is bounded away from zero; which becomes relatively very simple. In fact, 
suppose that, 
$|( \underline{q_n},\underline{-q_{n-1}})|\leq |(\underline{0}, \underline{q_{n}})|,$
then 
\begin{equation*}
\kappa_n=\dfrac{|(\underline{0}, \underline{q_{n}}) |}{|(\underline{0}, \underline{-q_{n-1}}) |}\geq \dfrac{1}{2},
\end{equation*}
else,   $\kappa_n$ is greater than
\begin{equation*}
\dfrac{ | \underline{-q_{n+1}}|}{|[ \underline{-q_{n+1}},\underline{-q_{n-1}})|};
\end{equation*}
which by the \textbf{Proposition~\ref{preimage and gaps adjacent}} is bounded away from zero. 

In the following part of the proof, we suppose that $a_n>1$ and the following lemma in \cite{5} (\textbf{Lemma 4.1.}) is used.
\begin{lem} \label{1-beta_n}
	The ratio
	\begin{equation*}
	1-\beta_n(i)=\dfrac{|\underline{-q_{n}+iq_{n-1}} |}{|[\underline{-q_{n}+iq_{n-1}}), \underline{0}) |} 	
	\end{equation*}
	is bounded away from zero by a uniform constant for all $i=0,\cdots, a_{n-1}. $
\end{lem}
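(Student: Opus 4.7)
The plan is to reduce the estimate to finitely many comparisons between preimages of $U$ and their adjacent gaps in the dynamical partitions at levels $n-2$, $n-1$ and $n$, each controlled by Proposition~\ref{preimage and gaps adjacent}, and then to sum a uniformly bounded number of such pieces under the bounded type hypothesis already in force in Proposition~\ref{kappa go away to 0}. First I would dispose of the two extremal indices. For $i=0$ the denominator is $|[\underline{-q_n},\underline{0})|=|\underline{-q_n}|+|I^{n}_{0}|$, where $I^{n}_{0}=(\underline{-q_n},\underline{0})$ is the long gap of $\mathcal{P}^{n}$ adjacent to the preimage $\underline{-q_n}$; Proposition~\ref{preimage and gaps adjacent} applied at level $n$ gives $|\underline{-q_n}|\geq c\,|I^{n}_{0}|$ for the universal constant $c$ of that proposition, hence $1-\beta_n(0)\geq c/(1+c)$. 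For $i=a_{n-1}$, the identity $-q_n+a_{n-1}q_{n-1}=-q_{n-2}$ makes $[\underline{-q_{n-2}},\underline{0})$ equal to $\underline{-q_{n-2}}$ together with the adjacent long gap $I^{n-2}_{0}$ of $\mathcal{P}^{n-2}$, and the same proposition applied at level $n-2$ yields the bound.

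For an intermediate $1\leq i\leq a_{n-1}-1$, write $A_{j}:=\underline{-q_n+jq_{n-1}}$ for $j=i,\ldots,a_{n-1}$. All the $A_{j}$ lie in $\wp_{n-1}$, and the combinatorial structure described in Proposition~\ref{partition of f}, combined with $q_n=a_{n-1}q_{n-1}+q_{n-2}$, shows that they are consecutive preimages of $\wp_{n-1}$ along the arc from $A_{i}$ to $\underline{0}$, separated by single gaps $G_{j}$ of the partition $\mathcal{P}^{n-1}$, each $G_{j}$ being adjacent in $\mathcal{P}^{n-1}$ to both $A_{j}$ and $A_{j+1}$. Proposition~\ref{preimage and gaps adjacent} at level $n-1$ then makes $|A_{j}|$, $|G_{j+1}|$ and $|A_{j+1}|$ mutually comparable with universal ratio $c$, and iterating along the chain produces $|A_{j}|,\,|G_{j}|\leq c^{-(a_{n-1}-i)}|A_{i}|$ for all $j$ in the range. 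The remaining piece $(\underline{-q_{n-2}},\underline{0})=I^{n-2}_{0}$ is comparable to $|A_{a_{n-1}}|$ by the same proposition at level $n-2$, hence to $|A_{i}|$. Summing the at most $2(a_{n-1}-i)+1$ comparable pieces yields $|(\underline{-q_n+iq_{n-1}},\underline{0})|\leq C|A_{i}|$ for a constant $C$ depending only on $c$ and $a_{n-1}$; since $\sup_{n}a_{n}<\infty$, this $C$ is uniform, and therefore $1-\beta_{n}(i)\geq 1/(1+C)$, as required.

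The main obstacle is the combinatorial verification in the previous paragraph: one must check carefully from Proposition~\ref{partition of f} that the $A_{j}$ are indeed exactly the consecutive preimages of $\wp_{n-1}$ along the arc from $A_{i}$ to $\underline{0}$, and that each separating interval $G_{j}$ is a single gap of $\mathcal{P}^{n-1}$ adjacent in that partition to both of its neighbouring preimages, so that Proposition~\ref{preimage and gaps adjacent} applies on both sides of each $G_{j}$ with the same universal constant $c$. Once this bookkeeping is settled, the uniform lower bound follows by summing a uniformly bounded number of comparable pieces as above.
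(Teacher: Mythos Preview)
The paper does not actually prove this lemma: it is quoted verbatim as Lemma~4.1 of \cite{5}. So there is no ``paper's own proof'' to compare against, and the question is simply whether your argument is sound.

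Your extremal cases $i=0$ and $i=a_{n-1}$ are fine, and the combinatorial bookkeeping for intermediate $i$ is correct: the intervals $G_j=(\underline{-q_n+jq_{n-1}},\underline{-q_n+(j+1)q_{n-1}})$ are exactly the long gaps $I^{n-1}_{q_n-(j+1)q_{n-1}}$ of $\mathcal P^{n-1}$, each adjacent to both $A_j$ and $A_{j+1}$. The problem is the chaining step. Proposition~\ref{preimage and gaps adjacent} asserts only that $|A|/|B|$ is bounded \emph{below} when $A$ is a preimage and $B$ an adjacent gap; it does not give the reverse inequality, so it does not make $|A_j|$, $|G_j|$, $|A_{j+1}|$ ``mutually comparable''. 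From $|A_j|\geq c\,|G_j|$ and $|A_{j+1}|\geq c\,|G_j|$ you can bound each gap above by either neighbouring preimage, but you cannot bound $|A_{j+1}|$ above by $|A_j|$, which is exactly what your iteration needs. Worse, the missing direction is genuinely false in this class: in the degenerate regime $\alpha_m\to 0$ (first part of the main result) one has $|G_j|/|A_j|\to 0$, as is made explicit later in Lemma~\ref{wi and anlpha n-1 are comparable}, where $|G_j|/|A_j|$ is shown to be comparable to $\alpha_{n-1}$. So the chain collapses, and your bound $|A_j|\leq c^{-(a_{n-1}-i)}|A_i|$ is unjustified.

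What is actually needed is a direct comparison $|A_{j+1}|\leq C\,|A_j|$, and this does not come from Proposition~\ref{preimage and gaps adjacent}. One gets it instead by a bounded-distortion argument: for example, pull $[A_j,A_{j+2})$ forward by $f^{q_n-jq_{n-1}}$ (or a suitable variant avoiding the flat piece) and use the Koebe principle (Proposition~\ref{koebe principle}) together with Proposition~\ref{preimage and gaps adjacent} on the image side, exactly as is done later in the proof of Lemma~\ref{wi and anlpha n-1 are comparable}. This is the mechanism behind the proof in \cite{5}. Once $|A_{j+1}|\leq C\,|A_j|$ is in hand, your summation over at most $a_{n-1}-i+1$ pieces goes through under the bounded-type hypothesis.
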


\begin{lem}\label{beta_n power ell}
	For every $n\in\N$ and for all $i=0,\cdots, a_{n-1}	$, there is a uniform constant $K$ such that 
	\begin{equation*}
	\beta_n(i)^{\ell_1,\ell_2 }\geq \beta_n(i+1).	
	\end{equation*}
\end{lem}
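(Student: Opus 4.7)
The plan is to apply $f^{q_{n-1}}$, moving from the configuration defining $\beta_n(i)$ to that defining $\beta_n(i+1)$, and to extract the exponent $\ell_1$ or $\ell_2$ from the single iterate during which the interval abuts the flat piece $U$. Assume $n$ is even (the odd case is symmetric with $\ell_2$ in place of $\ell_1$). Write $P_i := \underline{-q_n + iq_{n-1}}$ with endpoints $L_i<R_i$, and let $a$ be the endpoint of $U$ adjacent to the gap $(P_i,U) = (R_i,a)$. By \textbf{(\ref{add assump 2})}, $f$ coincides, up to a $C^{3}$ coordinate change, with $x\mapsto(a-x)^{\ell_1}$ on a one-sided neighbourhood of $a$.

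First, I would apply $f$ once. Using the critical form, the images of $(R_i,a)$ and $[L_i,a)$ have lengths controlled by $|R_i-a|^{\ell_1}$ and $|L_i-a|^{\ell_1}$ respectively, yielding
\[
\frac{|(P_i',\underline{1})|}{|[P_i',\underline{1})|}\;\leq\;K_1\,\beta_n(i)^{\ell_1},
\]
where $P_i' = f(P_i) = \underline{-q_n+iq_{n-1}+1}$ and $\underline{1}=f(U)$ is a single point. If $R_i$ lies inside the neighbourhood on which \textbf{(\ref{add assump 2})} applies pointwise, this is a direct computation; otherwise, the estimate is recovered via a $\po$-type cross-ratio that isolates the critical contribution near $a$, exactly in the spirit of the proof of Lemma~\ref{use additional condition}.

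Next, apply $f^{q_{n-1}-1}$ to the post-critical configuration. The iterates $f^{k}([P_i',\underline{1}))$ for $0\leq k\leq q_{n-1}-2$ do not meet $U$, by the closest-return interpretation of $q_{n-1}$ and the combinatorics of the dynamical partition $\mathcal{P}_{n-1}$. Hence $f^{q_{n-1}-1}$ restricts to a diffeomorphism on $[P_i',\underline{1})$, and the Cross-Ratio Inequality (Proposition~\ref{CRI}) together with the Koebe principle (Proposition~\ref{koebe principle}) provide uniformly bounded distortion; tracking endpoints ($P_i'\mapsto P_{i+1}$ and $\underline{1}\mapsto\underline{q_{n-1}}$) then gives
\[
\frac{|(P_{i+1},\underline{q_{n-1}})|}{|[P_{i+1},\underline{q_{n-1}})|}\;\leq\;K_2\,\beta_n(i)^{\ell_1}.
\]
Finally, since $\underline{q_{n-1}}$ and $U$ are the two boundary components of the long interval $(\underline{0},\underline{q_{n-1}})$ of $\mathcal{P}_{n-1}$, Proposition~\ref{preimage and gaps adjacent} makes the numerator and denominator of this ratio comparable to those of $\beta_n(i+1)$, yielding $\beta_n(i+1)\leq K\,\beta_n(i)^{\ell_1}$ with a uniform $K$.

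The main obstacle is the first step: producing the exponent $\ell_1$ with a uniform constant valid for \emph{all} admissible $i$, including those where $P_i$ sits outside the fixed neighbourhood of $a$ on which \textbf{(\ref{add assump 2})} can be used pointwise. In that regime the critical form of $f$ must be extracted through a cross-ratio argument combined with the $C^{3}$-bounded distortion of $f$ away from $U$, rather than from the power-law identity directly. A subsidiary point is the combinatorial verification in Step~2 that no intermediate iterate lands in $U$, which is standard but requires care with the structure of $\mathcal{P}_{n-1}$.
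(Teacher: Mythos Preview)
Your Step~2 contains a genuine gap: the map $f^{q_{n-1}-1}$ is \emph{not} a diffeomorphism on $[P_i',\underline{1})$. For every $i\le a_{n-1}-1$ the gap $(P_i,\underline{0})=(\underline{-q_n+iq_{n-1}},\underline{0})$ contains the closer preimage $\underline{-q_{n-2}}=\underline{-q_n+a_{n-1}q_{n-1}}$; hence after one iterate the interval $(P_i',\underline{1})$ contains $\underline{-q_{n-2}+1}$, and $f^{q_{n-2}-1}$ carries this onto $\underline{0}=U$. Since $q_{n-2}-1<q_{n-1}-1$, the orbit of $[P_i',\underline{1})$ meets $U$ strictly before time $q_{n-1}-1$, so by Remark~\ref{diffeo on an interval} neither the Koebe principle nor a single cross-ratio estimate for the full block is available. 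The closest-return heuristic you invoke would protect an interval contained in a single atom of $\mathcal{P}_{n-2}$, but $[P_i',\underline{1})$ straddles several such atoms and several preimages of $U$; the ``subsidiary point'' you flag is in fact the place where the argument breaks.

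This is precisely the obstacle the paper's proof is built to handle. After extracting the exponent, it replaces $\beta_n(i)^{\ell_1,\ell_2}$ by the cross-ratio $\cro\big([\underline{-q_{n-2}+1},P_i'),\,(P_i',\underline{1})\big)$ and then advances in blocks of length $q_{n-2}$: on each block one applies \cri\ to $f^{q_{n-2}-1}$ (diffeomorphic on the current configuration), arrives with the left arm abutting $\underline{0}$, discards that piece, and crosses the critical point using point~2 of Fact~\ref{fact}, which costs only a fixed multiplicative constant rather than another power. After $a_{n-2}$ such blocks and a final $f^{q_{n-3}-1}$ one reaches the configuration at $\underline{q_{n-1}}$, where Lemma~\ref{1-beta_n} yields the comparison with $\beta_n(i+1)$. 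The ingredient missing from your proposal is exactly this $q_{n-2}$-block decomposition together with the repeated use of Fact~\ref{fact} at each critical passage.
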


\begin{proof}
	For $n$ large enough and for fixed $i=0,\cdots, a_{n-1}	$, by the \textbf{Proposition~\ref{qn go to zero}} and the 
	\textbf{Fact~\ref{fact}}, we have:
	\begin{equation*}
	\beta_n(i)^{\ell_1,\ell_2 }=\dfrac{|(\underline{-q_{n}+kq_{n-1}+1}, \underline{1})|}{|[\underline{-q_{n}+iq_{n-1}+1}, \underline{1}) |};
	\end{equation*}
	which is greater than,
	\begin{equation*}
	\cro([\underline{-q_{n-2}+1}, \underline{-q_{n}+iq_{n-1}+1}),(\underline{-q_{n}+iq_{n-1}+1},\underline{1} )).	\end{equation*}	
	By applying the cross-ratio inequality under $f^{q_{n-2}-1} $, by the \textbf{Fact~\ref{fact}} the resulting ratio    is greater than	
	\begin{equation*}\begin{array}{r}
	\cro([\underline{-q_{n-2}+1}, \underline{-q_{n}+iq_{n-1}+q_{n-2}+1}),\vspace{0.1cm}\\(\underline{-q_{n}+iq_{n-1}+q_{n-2}+1},q_{n-2}+\underline{1} ))\end{array}	
	\end{equation*}		
	times a uniform constant.
	We now repeat this sequence of steps $a_{n-2}-1$ times: Apply $f^{q_{n-2}-1} $, discard the interval containing $\underline{0} $ and use  the point \textbf{2} of the \textbf{Fact~\ref{fact}}, and replace  the result by a cross-ratio spanning the intervals $\underline{-q_{n-2}+1} $. At the end, this will produce the cross ratio
	\begin{equation*}\begin{array}{r}
	\cro([\underline{-q_{n-2}+1}, \underline{-q_{n}+iq_{n-1}+a_{n-2}q_{n-2}+1}),\vspace{0.1cm}\\(\underline{-q_{n}+iq_{n-1}+a_{n-2}q_{n-2}+1},a_{n-2}q_{n-2}+\underline{1} )).\end{array}	
	\end{equation*}
	And finally, by applying $f^{q_{n-3}-1} $, since by the \textbf{Lemma~\ref{1-beta_n}} the interval containing $\underline{-q_{n-2}+q_{n-3}} $ bounded away from zero, then resulting ratio is 
	\begin{equation*}
	\dfrac{|(\underline{-q_{n}+(i+1)q_{n-1}},\underline{q_{n-1}} ) |}{|[\underline{-q_{n}+(i+1)q_{n-1}},\underline{q_{n-1}} )|}
	\end{equation*}
	times a uniform constant. Thus, since $\underline{-q_{n}+q_{n-1}} $ lies between\\
	$\underline{-q_{n}+(i+1)q_{n-1}} $ and  $\underline{q_{n-1}} $, then, by the \textbf{Lemma~\ref{1-beta_n}}, this ratio is comparable to $\beta_{n}(i+1).$
\end{proof}
\paragraph{ Back to the proof of the \textbf{Proposition~\ref{kappa go away to 0}}:}
Observe by the \textbf{Proposition~\ref{preimage and gaps adjacent}} that 
\begin{equation*}
|(\underline{-q_{n}+(i+1)q_{n-1}},\underline{0} ) |
\end{equation*}
and 
\begin{equation*}
{|[\underline{-q_{n}+iq_{n-1}},\underline{0} )|}
\end{equation*}
are comparable. Therefore, $\kappa_{n-1} $ is comparable to the product
\begin{equation*}
\beta_n(1) \cdots\beta_n(a_{n-1}-1).
\end{equation*}
By combining this with the \textbf{Lemma~\ref{beta_n power ell}}, we have the \textbf{Proposition~\ref{kappa go away to 0}}.

\paragraph{Recursive Affine Inequality  of order two on $\alpha_n$}

\begin{prop}\label{alpha n, n-1 n-2} If $\rho(f)$ is of bounded type, then there is 	a uniform constant $K$ so that,  
	\begin{equation*}
	\alpha_{2n}\geq K\left(\alpha_{2n-1}\right)^{\dfrac{\ell_2}{\ell_1}\cdot\dfrac{1-\ell_2^{-a_{2n}}}{\ell_2-1}}\left(\alpha_{2n-2}\right)^{\ell_1^{-a_{2n-1}}}
	\end{equation*} and
	\begin{equation*}
	\alpha_{2n+1}\geq K\left(\alpha_{2n}\right)^{\dfrac{\ell_1}{\ell_2}\cdot\dfrac{1-\ell_1^{-a_{2n+1}}}{\ell_1-1}}\left(\alpha_{2n-1}\right)^{\ell_2^{-a_{2n}}}. 	\end{equation*}
\end{prop}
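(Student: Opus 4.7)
My plan is to prove the even-index inequality; the odd case is symmetric by interchanging $\ell_1\leftrightarrow\ell_2$ (cf.\ Remark~\ref{critical exponent and  renormalization}). The two factors on the right-hand side come from two separate estimates which are multiplied at the end.

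For the second factor $\alpha_{2n-2}^{\ell_1^{-a_{2n-1}}}$ I would iterate Lemma~\ref{beta_n power ell} along the chain $\beta_{2n}(0)=\alpha_{2n},\ \beta_{2n}(1),\ldots,\beta_{2n}(a_{2n-1})=\alpha_{2n-2}$. Since $2n$ is even each step costs a power $\ell_1$; after $a_{2n-1}$ iterations, absorbing the uniform constants into a single $K$, one obtains
\begin{equation*}
\alpha_{2n-2}\leq K\,\alpha_{2n}^{\ell_1^{a_{2n-1}}},
\end{equation*}
which, solved for $\alpha_{2n}$, yields the second factor. For the first factor $\alpha_{2n-1}^{(\ell_2/\ell_1)(1-\ell_2^{-a_{2n}})/(\ell_2-1)}$ I would sharpen the bound of Proposition~\ref{kappa go away to 0}: following that proof, $\kappa_{2n}$ is comparable to $\prod_{i=1}^{a_{2n}-1}\beta_{2n+1}(i)$, and inverting Lemma~\ref{beta_n power ell} for $\beta_{2n+1}$ (exponent $\ell_2$) and iterating down from $\beta_{2n+1}(a_{2n})=\alpha_{2n-1}$ gives $\beta_{2n+1}(i)\geq K\,\alpha_{2n-1}^{\ell_2^{-(a_{2n}-i)}}$. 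Summing the resulting geometric series in the exponent produces
\begin{equation*}
\kappa_{2n}\geq K\,\alpha_{2n-1}^{(1-\ell_2^{-a_{2n}})/(\ell_2-1)},
\end{equation*}
and Remark~\ref{rem tau alpha kappa} converts this into a comparable lower bound on $\tau_{2n}$.

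The last step, which I expect to be the main obstacle, is to transport this lower bound on $\tau_{2n}$ (a ratio of intervals on the forward side of $U$) into a lower bound on $\alpha_{2n}$ (a ratio involving the preimage $\underline{-q_{2n}}$ on the backward side). I would do it by an expanding cross-ratio argument using the $\po$ inequality of Proposition~\ref{CRI} together with the Koebe Principle (Proposition~\ref{koebe principle}), applied to an orbit segment of bounded total length thanks to Proposition~\ref{qn go to zero}. The asymmetric power-law behavior of $f$ at the two tips of $U$ given by Fact~\ref{fact}, where $f$ looks like $x^{\ell_1}$ on the left and like $x^{\ell_2}$ on the right, is exactly what produces the multiplicative factor $\ell_2/\ell_1$ in the final exponent of $\alpha_{2n-1}$, once distortions on the two sides are compared via Fact~\ref{fact}(\ref{rl4}). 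Producing precisely this rational factor $\ell_2/\ell_1$ rather than some other combination of $\ell_1,\ell_2$ is the delicate bookkeeping; once it is done, multiplying the resulting bound by the one from the first step gives the stated inequality, and the odd case follows by the symmetric argument.
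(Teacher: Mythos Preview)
Your plan diverges from the paper's proof in a structural way, and the divergence hides a real gap.

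The paper does \emph{not} obtain two standalone lower bounds on $\alpha_{2n}$ and multiply them. It applies $f$ once so that $\alpha_{2n}^{\ell_1}$ becomes a single ratio of intervals at time $1$, and then factors \emph{that ratio} as a genuine product
\[
\alpha_{2n}^{\ell_1}\;\geq\;\xi_{1,2n}\cdot\xi_{2,2n},
\qquad
\xi_{1,2n}=\dfrac{|(\underline{-q_{2n}+1},\underline{1})|}{|(\underline{-q_{2n}+1},\underline{-q_{2n-1}+1})|},
\quad
\xi_{2,2n}=\dfrac{|(\underline{-q_{2n}+1},\underline{-q_{2n-1}+1})|}{|[\underline{-q_{2n}+1},\underline{-q_{2n-1}+1})|},
\]
by inserting the intermediate point $\underline{-q_{2n-1}+1}$. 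Lemma~\ref{xi 1} gives $\xi_{1,2n}\geq K\tau_{2n}$ via the $\cro$-CRI (not $\po$, not Koebe), and Lemma~\ref{xi 2} gives $\xi_{2,2n}\geq K\alpha_{2n-2}^{\ell_1^{1-a_{2n-1}}}$ by the $\beta$-chain \emph{starting at $i=1$}. After using $\tau_{2n}\asymp\alpha_{2n-1}\kappa_{2n}$ (Remark~\ref{rem tau alpha kappa}) and Proposition~\ref{kappa go away to 0}, one takes the $\ell_1$-th root. That root is the entire source of the $1/\ell_1$; the $\ell_2$ in the numerator is pure algebra,
\[
1+\dfrac{1-\ell_2^{\,1-a_{2n}}}{\ell_2-1}=\dfrac{\ell_2(1-\ell_2^{-a_{2n}})}{\ell_2-1},
\]
not a comparison of distortions at the two tips via Fact~\ref{fact}(\ref{rl4}).

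Your scheme instead derives $\alpha_{2n}\geq K\alpha_{2n-2}^{\ell_1^{-a_{2n-1}}}$ by iterating Lemma~\ref{beta_n power ell} from $i=0$, and separately a bound $\alpha_{2n}\geq K\alpha_{2n-1}^{\cdots}$ by a vague ``transport'' of $\tau_{2n}$, and then multiplies. Two lower bounds $x\geq A$, $x\geq B$ with $A,B\leq 1$ trivially give $x\geq AB$; nothing is gained by the multiplication, and in fact your first bound \emph{alone} would already imply the Proposition. But then the recursion $\nu_{2n}\leq \ell_1^{-a_{2n-1}}\nu_{2n-2}+C$ would force bounded geometry for every $\ell_1>1$ with bounded-type $\rho$, contradicting the degenerate-geometry half of the Main Result. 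The paper avoids this by never iterating Lemma~\ref{beta_n power ell} from $i=0$: the passage from $\beta_{2n}(0)$ to $\beta_{2n}(1)$ is absorbed into the single application of $f$ and the $\xi$-splitting, so the $\alpha_{2n-2}$ factor only appears jointly with $\xi_{1,2n}$ (hence with the $\alpha_{2n-1}$ factor), never on its own. Your ``main obstacle'' (going from $\tau_{2n}$ to $\alpha_{2n}$) is precisely Lemma~\ref{xi 1}, and it is handled by the $\cro$ cross-ratio inequality, not by $\po$ or the Koebe principle.
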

\paragraph{Proof of the Proposition}

If $n$ is even and large enough, then 
\begin{equation*}
\alpha^{\eg}_{n}=	\dfrac{ |( \underline{-q_{n}+1},\underline{1})|}{|[ \underline{-q_{n}+1},\underline{1})|};
\end{equation*}
which in turn is larger than the product of two ratios 
\begin{equation*}
\xi_{1,n}=	\dfrac{ |( \underline{-q_{n}+1},\underline{1})|}{|( \underline{-q_{n}+1},\underline{-q_{n-1}+1})|}
\quad \mbox{and} \quad 	\xi_{2,n}=	\dfrac{ |( \underline{-q_{n}+1},\underline{-q_{n-1}+1})|}{|[ \underline{-q_{n}+1},\underline{-q_{n-1}+1})|}
\end{equation*}

\begin{lem}\label{xi 1} For all $n$ even large enough
	\begin{equation*}
	\xi_{1,n}\geq K\tau_n.	
	\end{equation*}
\end{lem}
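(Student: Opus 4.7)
The plan is to rewrite $\xi_{1,n}$, which compares two intervals at the first image of $U$, as a quantity at time $0$ via Fact~\ref{fact}, and then compare with $\tau_n$. Since $n$ is even, the arcs $\underline{-q_n}$ and $\underline{-q_{n-1}}$ sit on opposite sides of the flat piece $U=\underline{0}$, so the image point $\underline{1}=f(U)$ lies between $\underline{-q_n+1}=f(\underline{-q_n})$ and $\underline{-q_{n-1}+1}=f(\underline{-q_{n-1}})$ and splits the denominator as
$|(\underline{-q_n+1},\underline{-q_{n-1}+1})|=|(\underline{-q_n+1},\underline{1})|+|(\underline{1},\underline{-q_{n-1}+1})|$. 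Applying the first inequality of Fact~\ref{fact} at the left endpoint $a$ of $U$ (with exponent $\ell_1$) and at the right endpoint $b$ (with exponent $\ell_2$) gives
\begin{equation*}
|(\underline{-q_n+1},\underline{1})|\asymp|(\underline{-q_n},\underline{0})|^{\ell_1}\quad\text{and}\quad |(\underline{1},\underline{-q_{n-1}+1})|\asymp|(\underline{0},\underline{-q_{n-1}})|^{\ell_2},
\end{equation*}
so that
\begin{equation*}
\xi_{1,n}\asymp\dfrac{|(\underline{-q_n},\underline{0})|^{\ell_1}}{|(\underline{-q_n},\underline{0})|^{\ell_1}+|(\underline{0},\underline{-q_{n-1}})|^{\ell_2}}.
\end{equation*}

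Next I would run a case analysis on which summand dominates the denominator. If $|(\underline{-q_n},\underline{0})|^{\ell_1}$ is comparable to, or larger than, $|(\underline{0},\underline{-q_{n-1}})|^{\ell_2}$, then $\xi_{1,n}$ is bounded below by a uniform positive constant, and since $\tau_n\leq 1$ the bound $\xi_{1,n}\geq K\tau_n$ is immediate. In the opposite regime the denominator is comparable to its right-hand term, $\xi_{1,n}\asymp|(\underline{-q_n},\underline{0})|^{\ell_1}/|(\underline{0},\underline{-q_{n-1}})|^{\ell_2}$, and the proof reduces to the residual estimate
\begin{equation*}
|(\underline{-q_n},\underline{0})|^{\ell_1}\gtrsim\tau_n\cdot|(\underline{0},\underline{-q_{n-1}})|^{\ell_2}.
\end{equation*}

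For this last inequality I would invoke Remark~\ref{rem tau alpha kappa} to replace $\tau_n$ by the comparable quantity $\alpha_{n-1}\kappa_n$, combined with Proposition~\ref{preimage and gaps adjacent} to compare preimages of $U$ with their adjacent gaps in the partitions $\mathcal{P}^{n-1}$ and $\mathcal{P}^{n}$. I would then apply the Koebe principle (Proposition~\ref{koebe principle}) to the iterate $f^{q_{n-1}-1}$, which by Remark~\ref{diffeo on an interval} is a diffeomorphism on a suitable neighbourhood of $(\underline{-q_n+1},\underline{-q_{n-1}+1})$, carries the arc $\underline{-q_{n-1}+1}$ onto the flat piece $\underline{0}$, and thereby transports the ratios defining $\tau_n$ back to the first image of $U$. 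The principal obstacle is precisely this last step: it mixes the two critical exponents $\ell_1$ and $\ell_2$ coming from opposite sides of $U$ and demands control of the distortion of an iterate of length $q_{n-1}-1$. The bounded-type hypothesis on $\rho(f)$ enters exactly here, keeping the total length of the Koebe space summable and the distortion uniformly bounded, while Lemma~\ref{sigma go a away} provides the auxiliary control on the $\sigma_n$ that appear in the constants.
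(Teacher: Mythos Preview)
Your approach has a genuine gap. The case split does not reduce the problem: in the ``hard'' regime you are left with
\[
\frac{|(\underline{-q_n},\underline{0})|^{\ell_1}}{|(\underline{0},\underline{-q_{n-1}})|^{\ell_2}}\;\gtrsim\;\tau_n,
\]
but by Fact~\ref{fact} the left side is (up to constants) exactly $|(\underline{-q_n+1},\underline{1})|/|(\underline{1},\underline{-q_{n-1}+1})|$, which is $\xi_{1,n}$ itself in that regime. So the residual inequality is the original lemma restated at time $0$, and nothing has been gained. Your proposed resolution via Koebe on $f^{q_{n-1}-1}$ is only gestured at; you correctly identify the obstacle (the iterate carries $\underline{-q_{n-1}+1}$ onto the flat piece $\underline{0}$, so there is no Koebe space on that side) but do not overcome it. The invocations of Remark~\ref{rem tau alpha kappa}, Lemma~\ref{sigma go a away} and bounded type at the end are not tied to any concrete estimate; in particular nothing you cite relates a quantity with exponent $\ell_1$ on one side of $U$ to one with exponent $\ell_2$ on the other.

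The paper avoids this entirely by working with cross-ratios rather than Koebe. It bounds $\xi_{1,n}$ below by $\cro\bigl((\underline{-q_n+1},\underline{1}),\,\underline{-q_{n-1}+1}\bigr)$ and then iterates the Cross-Ratio Inequality (Proposition~\ref{CRI}) under $f^{q_{n-1}-1}$ a total of $a_{n-1}$ times, at each step discarding the piece containing $\underline{0}$ via Fact~\ref{fact}(\ref{rl4}). A final application of CRI under $f^{q_{n-2}-1}$ produces
\[
\xi_{1,n}\;\geq\; K\,\tau_n\cdot\frac{|(\underline{q_{n-2}},\underline{-q_{n-3}}]|}{|(\underline{q_{n}},\underline{-q_{n-3}}]|},
\]
and the extra factor is bounded below by Proposition~\ref{preimage and gaps adjacent}. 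This route never separates the two critical exponents and never needs a two-sided Koebe neighbourhood; the only cost is that the accumulated constant depends on $a_{n-1}$, which is where the bounded-type hypothesis actually enters.
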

\begin{proof}
	Observe that $\xi_{1,n}$ is greater than 
	\begin{equation*}
	\cro\,(( \underline{-q_{n}+1},\underline{1}), \underline{-q_{n-1}+1} ).
	\end{equation*}
	By applying \cri\, on $f^{q_{n-1}-1}$ and discarding the intervals containing $\underline{0}$. Repeat this  $a_{n-1}-1$ times more: By the \textbf{Fact~\ref{fact}}, the resulting ratio  is large than
	\begin{equation*}
	\cro\,(( \underline{-q_{n-2}-q_{n-1}+1},\underline{(a_{n-1}-1)q_{n-1}+1}), (\underline{1}, \underline{-q_{n-1}+1} ])).
	\end{equation*}
	Apply 	$f^{q_{n-1}-1}$, and discard the intervals containing the flat interval. Apply $f$, replace the resulting by the cross-ratio 
	\begin{equation*}
	\cro\,(( \underline{-q_{n-2}+1},\underline{q_{n-1}+1}), (\underline{1}, \underline{-q_{n-1}+1} ])).
	\end{equation*}		
	Thus, by \cri\, on $f^{q_{n-2}-1}$	and the inequalities above, we obtain	
	\begin{equation}\label{e23}
	\xi_{1,n}>	\dfrac{ |( \underline{q_{n-2}},\underline{-q_{n-3}}]|}{|( \underline{q_{n}},\underline{-q_{n-3}}]|}\tau_n
	\end{equation}
	The first factor on the right hand side of \textbf{\ref{e23}} is greater than
	\begin{equation*}
	\dfrac{ | \underline{-q_{n-3}}|}{|( \underline{0},\underline{-q_{n-3}}]|};
	\end{equation*}	
	which by the \textbf{Proposition~\ref{preimage and gaps adjacent}} goes away from zero. The Lemma is proved.
\end{proof}

\begin{lem}\label{xi 2} There is a uniform constant $K$ so that, for all $n$ 
	\begin{equation*}
	\xi_{2,2n}\geq K\left( \alpha_{2n-2}\right)^{\ell_1^{-a_{2n-1}+1}} \quad\mbox{and } \quad \xi_{2,2n+1}\geq K\left( \alpha_{2n-1}\right)^{\ell_1^{-a_{2n}+1}}.
	\end{equation*}
\end{lem}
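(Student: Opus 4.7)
The plan is to compare $\xi_{2,n}$ with $\beta_n(1)$ up to a uniform multiplicative constant, and then to iterate \textbf{Lemma~\ref{beta_n power ell}} backwards starting from the identity $\beta_n(a_{n-1}) = \alpha_{n-2}$, which holds because $q_n = a_{n-1}q_{n-1}+q_{n-2}$ implies $-q_n + a_{n-1}q_{n-1} = -q_{n-2}$. This reduces the lemma to two independent tasks: a purely combinatorial/algebraic iteration, and a single distortion estimate for the pullback map $f^{q_{n-1}-1}$.

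For the iteration, I would rewrite \textbf{Lemma~\ref{beta_n power ell}} in the inverted form $\beta_n(i) \geq \beta_n(i+1)^{1/\ell}$, where $\ell = \ell_1$ when $n$ is even and $\ell = \ell_2$ when $n$ is odd. A straightforward induction from $i = a_{n-1}-1$ down to $i=1$ then yields, up to the uniform constant supplied by that lemma,
\begin{equation*}
\beta_n(1) \;\geq\; K\, \alpha_{n-2}^{\ell^{-(a_{n-1}-1)}}.
\end{equation*}
For $n=2n$ this matches the announced exponent $\ell_1^{-a_{2n-1}+1}$; for $n=2n+1$ the natural exponent that comes out is $\ell_2^{-a_{2n}+1}$, which is consistent with the statement via the symmetry recorded in \textbf{Remark~\ref{critical exponent and renormalization}}.

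To identify $\xi_{2,n}$ with $\beta_n(1)$ up to a constant, I would pull back by $f^{q_{n-1}-1}$. This map sends $\underline{-q_n+1}$ onto $\underline{-q_n+q_{n-1}}$ and $\underline{-q_{n-1}+1}$ onto $\underline{0}=U$, so it carries the pair of intervals defining $\xi_{2,n}$ precisely onto the pair defining $\beta_n(1)$. The dynamical-partition structure (\textbf{Proposition~\ref{partition of f}}) shows that none of the intermediate iterates of $[\underline{-q_n+1},\underline{-q_{n-1}+1})$ meets $U$, so $f^{q_{n-1}-1}$ is a diffeomorphism there (\textbf{Remark~\ref{diffeo on an interval}}). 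Applying the Koebe principle (\textbf{Proposition~\ref{koebe principle}}) on a slightly enlarged domain — noting that the iterates of that domain are pairwise essentially disjoint, giving a bounded $\varsigma$ — or equivalently telescoping a chain of cross-ratio expansions exactly as in the proof of \textbf{Lemma~\ref{xi 1}}, then yields uniformly bounded distortion and hence $\xi_{2,n} \geq K\,\beta_n(1)$.

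The main technical point is this final distortion estimate: one must either produce enough Koebe space to invoke \textbf{Proposition~\ref{koebe principle}}, or select the quadruples in \textbf{Proposition~\ref{CRI}} so that successive cross-ratio expansions convert the configuration of $\xi_{2,n}$ into that of $\beta_n(1)$ with only a uniform multiplicative loss. Once this is in hand, combining with the backwards iteration of \textbf{Lemma~\ref{beta_n power ell}} gives the stated bounds for both parities of $n$, and the rest of the argument is routine.
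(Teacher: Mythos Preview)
Your proposal is correct and follows essentially the same route as the paper: reduce to $\xi_{2,n}\geq K\beta_n(1)$ by pushing forward under $f^{q_{n-1}-1}$, then iterate \textbf{Lemma~\ref{beta_n power ell}} from $\beta_n(a_{n-1})=\alpha_{n-2}$ to recover the exponent $\ell^{-(a_{n-1}-1)}$. Two small remarks: the paper carries out the distortion step via the \textbf{Cr}-\textbf{CRI} together with \textbf{Lemma~\ref{1-beta_n}} (enlarging on the left by $[\underline{-q_n+q_{n-1}+1},\underline{-q_n+1})$) rather than Koebe, which is the cleaner choice here since the image under $f^{q_{n-1}-1}$ of your interval abuts $U$ on the $\underline{-q_{n-1}+1}$ side and so has no Koebe space there; and the paper singles out the case $a_{n-1}=1$ with a direct two-step cross-ratio chain, though your uniform argument covers it as well since then $\beta_n(1)=\alpha_{n-2}$ already. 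Your observation about the exponent in the odd case being $\ell_2^{-a_{2n}+1}$ rather than $\ell_1^{-a_{2n}+1}$ is also correct.
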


\begin{proof}
	If $a_{n-1} =1$, then $\xi_{2,n}$ is greater than 
	\begin{equation*}
	\cro\,([ \underline{-q_{n-2}+1},\underline{-q_{n}+1}), (\underline{-q_{n}+1},\underline{-q_{n-1}+1}) ).
	\end{equation*}
	By applying \cri\, $({q_{n-2}-1})$,  the ratio resulting is greater than
	\begin{equation*}
	\cro\,([ \underline{-q_{n-2}+1},\underline{-q_{n-1}+1}), (\underline{-q_{n-1}+1}, \underline{-q_{n-3}+1} ))
	\end{equation*}		
	times a uniform constant. Thus, by applying \cri\, to this ratio with $f^{q_{n-3}-1}$, inequalities above and the  \textbf{Proposition~\ref{preimage and gaps adjacent}} the result follows. 
	
	Now, suppose that $a_{n-1} >1$ then $\xi_{2,n}$ is greater than 
	\begin{equation*}
	\cro\,([ \underline{-q_{n}+q_{n-1}+1},\underline{-q_{n}+1}), (\underline{-q_{n}+1},\underline{-q_{n-1}+1}) ).
	\end{equation*}
	By applying $f^{q_{n-1}-1}$ to this ratio, it follows from the \textbf{Lemma~\ref{1-beta_n}} that
	\begin{equation*}
	\xi_{2,n}\geq K'\beta_n(1).
	\end{equation*}
	And the lemma follows from this by using \textbf{Lemma~\ref{beta_n power ell}} modulo the fact that 
	\begin{equation*}
	\beta_n(a_{n-1})=\alpha_{n-2}.
	\end{equation*}
\end{proof}

Combining the  \textbf{Lemma~\ref{xi 1}},   the \textbf{Lemma~\ref{xi 2}}, 	the 
\textbf{Proposition~\ref{kappa go away to 0}} and the \textbf{Remark~\ref{rem tau alpha kappa}},	the result of the \textbf{Proposition~\ref{alpha n, n-1 n-2}} follows.

\begin{rem}
	By the inequality obtained from the \textbf{Proposition~\ref{alpha n, n-1 n-2}}, the  sequence $\nu_{n}$ defined by
	\begin{equation*}
	\nu_{n}=-\ln\alpha_n
	\end{equation*}	
	verifies the following 	Recursive Affine Inequalities of order two for every  $n>0$ 
	\begin{equation}\label{nu 1}
	\nu_{2n} \leq\dfrac{\ell_2}{\ell_1}\cdot t_2(a_{2n})\nu_{2n-1}+\ell_1^{-a_{2n-1}}\nu_{2n-2} +\widetilde{K'} 
	\end{equation}
	and 	
	\begin{equation}\label{nu 2}
	\nu_{2n+1} \leq \dfrac{\ell_1}{\ell_2}\cdot t_1(a_{2n+1})\nu_{2n}+\ell_2^{-a_{2n}}\nu_{2n-1} +\widetilde{K'};  
	\end{equation}
	with 
	\begin{equation*}
	t_i(j)=\dfrac{1-\ell_i^{-j}}{\ell_i-1}.
	\end{equation*}
\end{rem}	

\subsubsection{Analysis of Recursive Affine Inequality  }

We will prove that the sequence $\nu_{n}$ is bounded. Let us consider the sequence of vectors $(v_n)$ defined by
\begin{equation*}
v_n=\left(	\begin{array}{c}
\nu_{n}\\\nu_{n-1}
\end{array}\right),
\end{equation*}
the vector given by
\begin{equation*}
\overline{\kappa}	=\left(	\begin{array}{c}
\widetilde{K'}\\0
\end{array}\right)
\end{equation*}
and the sequence  matrix 
\begin{equation*}
A_{\eg,\ed}(2n)	=\left(	\begin{array}{cc}
\dfrac{\ell_2}{\ell_1}\cdot t_2(a_{2n}) & \ell_1^{-a_{2n-1}}\\1 & 0
\end{array}\right)\end{equation*} and
\begin{equation*} A_{\eg,\ed}(2n+1)	=\left(	\begin{array}{cc}
\dfrac{\ell_1}{\ell_2}\cdot t_1(a_{2n+1}) & \ell_2^{-a_{2n}}\\1 & 0
\end{array}\right);
\end{equation*}
say associated matrix  to the Recursive Affine Inequalities \textbf{(\ref{nu 1})} and \textbf{(\ref{nu 2})} respectively; in this sense that  \textbf{(\ref{nu 1})} and \textbf{(\ref{nu 2})} can be rewritten respectively in the form:
\begin{equation}
v_{2n}\leq	A_{\eg,\ed}(2n)v_{2n-1} + \overline{\kappa}\quad \mbox{and} \quad 
v_{2n+1}\leq A_{\ed,\eg}(2n+1)v_{2n}  + \overline{\kappa}.
\end{equation}
Therefore, for every $n:=2p_n+r_n$; with $p_n\in \N^*$ and  $r_n\in\{0,1\}$, we have
\begin{align*}
v_{n}\leq &	\overline{A}_{\eg,\ed}(n)\overline{A}_{\eg,\ed}(n-2)\cdots\overline{A}_{\eg,\ed}(2+r_n) v_{2-r_n} +\\ &  
\left(Id+
\sum_{i=4+r_n}^{n}\overline{A}_{\eg,\ed}(n)\overline{A}_{\eg,\ed}(n-2)\cdots\overline{A}_{\eg,\ed}(i)\right)\overline{\kappa'}.
\end{align*}
where,
\begin{equation*}
\overline{A}_{\eg,\ed}(n)=A_{\eg,\ed}(n)	A_{\eg,\ed}(n-1). 
\end{equation*}
Observe that, if $	(\eg,\ed)$ is very close to an element of the set   
\begin{equation*}
\{(a,\infty),\; (\infty,b),\; (\infty,\infty),\;a,b\in\R\}
\end{equation*}
$\overline{A}_{\eg,\ed}(n)$ is diagonalizable with nonnegative eigenvalues and at most one is strictly positive; that is,  ${1}/{\ell_1} $ or ${1}/{\ell_2} $ and 
as $\ell_1, \ell_2>2 $, then, $\overline{A}_{\eg,\ed}(n)$ contracts the Euclidean metric; therefore, $ v_n \mbox{ (also } \nu_n \mbox{ and } \alpha_n \mbox{)} $
is bounded. In the following analysis, we suppose that $ 1<\ell_1, \ell_2<C<\infty $; for some $C$ in $\R_+$.

\begin{lem}\label{relatively compact}
	Fix $(\eg,\ed)\in[2, \infty)^2$. The sequence 
	\begin{equation*}
	\overline{A}^{\circ n}_{\eg,\ed}:=\overline{A}_{\eg,\ed}(n)\overline{A}_{\eg,\ed}(n-2)\cdots\overline{A}_{\eg,\ed}(4)  
	\end{equation*}
	is bounded (uniformly) by $\max\{{\eg}/{\ed}, {\ed}/{\eg} \}.$
	
\end{lem}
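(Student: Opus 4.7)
The plan is to control the product $\overline{A}^{\circ n}_{\ell_1,\ell_2}$ by combining an explicit computation of each factor with an invariant cone argument. First I would expand $\overline{A}(n) = A(n) A(n-1)$ for both parities of $n$; using the identity $t_i(j) = \sum_{k=1}^{j} \ell_i^{-k} = (1-\ell_i^{-j})/(\ell_i - 1)$, every entry becomes a nonnegative expression in $t_1(a_{n-1})$, $t_2(a_{n-2})$ (or vice versa), $\ell_1^{-a_{n-1}}$, $\ell_2^{-a_{n-2}}$, with the ratios $\ell_1/\ell_2, \ell_2/\ell_1$ appearing only as coefficients of the off-diagonal entries. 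A short calculation gives $\det \overline{A}(n) = \ell_1^{-a_{n-1}} \ell_2^{-a_{n-2}} \in (0,1]$. A diagonal conjugation $\widetilde{A}(n) := D^{-1} \overline{A}(n) D$ with $D = \mathrm{diag}(1, \ell_1/\ell_2)$ absorbs the off-diagonal $\ell_i/\ell_j$ factors, producing a matrix with strictly positive entries whose size is uniformly bounded in terms of $\ell_1,\ell_2$ alone, since $t_i(a_j) \leq 1/(\ell_i - 1) \leq 1$ under the hypothesis $\ell_1,\ell_2 \geq 2$.

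Next I would estimate the spectral behaviour of each $\widetilde{A}(n)$. Using the identity $t_i(j)(\ell_i - 1) = 1 - \ell_i^{-j}$ one computes the trace and shows that $\mathrm{tr}\,\widetilde{A}(n) \leq 1 + \det \widetilde{A}(n)$, which by the quadratic characteristic polynomial forces both (real) eigenvalues to lie in $[-1, 1]$. Because each $\widetilde{A}(n)$ has strictly positive entries with spectral radius at most one, the Birkhoff--Hopf theorem gives a common non-expansion on the positive cone $\mathbb{R}_+^2$ equipped with the Hilbert projective metric. Iterating across $n$ yields uniform boundedness of $\widetilde{A}^{\circ n} = \widetilde{A}(n)\cdots\widetilde{A}(4)$. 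Converting back via the diagonal conjugation introduces exactly a factor $\|D\|\cdot\|D^{-1}\| = \max(\ell_1/\ell_2, \ell_2/\ell_1)$, yielding the claimed bound on $\overline{A}^{\circ n}$.

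The main obstacle will be the non-commutativity of the factors: having each individual spectral radius bounded by one is not enough, since products of non-commuting matrices with spectral radius $\leq 1$ can still grow. The crucial ingredient is the strict positivity of all entries of $\widetilde{A}(n)$, which is what unlocks the Birkhoff--Hopf mechanism and produces a \emph{single} metric in which every factor is simultaneously non-expanding. The corner $(\ell_1, \ell_2) = (2,2)$ is the most delicate case: there the inequality $\mathrm{tr} \leq 1 + \det$ is saturated for some choices of $a_i$, giving an eigenvalue exactly $1$, and uniform contraction in the projective metric is lost. At that boundary point one would instead verify directly, using the explicit form of the entries computed in Step~1, that the product $\widetilde{A}^{\circ n}$ remains bounded (a fact which is moreover consistent with the exclusion of $(2,2)$ in the main theorem).
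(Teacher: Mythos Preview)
Your Step~3 is incorrect, and this is not merely a boundary issue at $(2,2)$. Take $\ell_1=\ell_2=2.1$, $a_{2n}$ large, and $a_{2n-1}=a_{2n-2}=1$. Then $t_2(a_{2n})\to 1/(\ell_2-1)\approx 0.909$, $t_1(1)=\ell_1^{-1}\approx 0.476$, and after your conjugation
\[
\mathrm{tr}\,\widetilde{A}(2n)=t_1t_2+\ell_1^{-a_{2n-1}}+\ell_2^{-a_{2n-2}}\approx 1.385,
\qquad
1+\det\widetilde{A}(2n)=1+\ell_1^{-1}\ell_2^{-1}\approx 1.227,
\]
so $\mathrm{tr}>1+\det$ and the Perron eigenvalue is $\approx 1.19>1$. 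The algebraic reason is that the inequality $\mathrm{tr}\le 1+\det$ unwinds to $t_1(a_{2n-1})\,t_2(a_{2n})\le(1-\ell_1^{-a_{2n-1}})(1-\ell_2^{-a_{2n-2}})$; the left side involves $a_{2n}$ while the right side involves $a_{2n-2}$, and there is no relation between these two partial quotients.

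Independently, Step~4 would not close the argument even if Step~3 held. Birkhoff--Hopf gives contraction of the \emph{Hilbert projective metric}: it controls the direction of $\widetilde{A}^{\circ n}v$, not its norm. To pass from ``each factor has spectral radius $\le 1$ and is positive'' to ``the product is bounded'' you would still need a single norm (or Lyapunov function) in which every $\widetilde{A}(n)$ is genuinely nonexpansive; neither the row sums nor the column sums of $\widetilde{A}(n)$ are $\le 1$ in general, so no obvious $\ell^1/\ell^\infty$ argument is available. You correctly flag non-commutativity as the danger but the cone argument as stated does not neutralise it.

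The paper's route avoids spectral estimates entirely. It first dominates $\overline{A}(n)$ entrywise by a matrix $\overline{B}(n)$ whose conjugated form is \emph{rank one}: $\widetilde{B}(n)=u_n v_n^{T}$ with $u_n=(1-b_n,1)^{T}$, $v_n=(1-b_{n-1},b_{n-2})^{T}$, where $b_k\in(0,1)$. The product then telescopes,
\[
\widetilde{B}(n)\widetilde{B}(n-2)\cdots\widetilde{B}(4)=u_n\Bigl(\prod_k v_k^{T}u_{k-2}\Bigr)v_4^{T},
\]
and a one-line computation gives $v_k^{T}u_{k-2}=1-b_{k-1}(1-b_{k-2})\le 1$. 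Thus every entry of the (conjugated) product is bounded by~$1$, and undoing the conjugation produces the factor $\max(\ell_1/\ell_2,\ell_2/\ell_1)$. Your diagonal conjugation is the right first move; what is missing is this rank-one majorant, which replaces the spectral/Birkhoff machinery by a direct combinatorial bound.
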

\begin{proof}
	Observe that, for all $n\in\N$, 
	\begin{equation*}
	\overline{A}_{\eg,\ed}(n)\leq\left(	\begin{array}{cc}
	( 1-b_n(2))( 1-b_{n-1}(2)) & \frac{\ell_{1}}{\ell_{2}}( 1-b_n(2))b_{n-2}(2)\\\frac{\ell_{2}}{\ell_{1}}( 1-b_{n-1}(2)) & b_{n-2}(2)
	\end{array}\right)=:\overline{B}_{\eg,\ed}(n);
	\end{equation*}
	with, 
	\begin{equation*}
	b_{2n}(\ell)=b_{2n}:=\ell_2^{-a_{2n}} \quad\mbox{and}\quad b_{2n+1}(\ell)=b_{2n+1}:=\ell_1^{-a_{2n+1}}.
	\end{equation*}
	Let be a sequence $(x_{n})_{n\in\N} $   defined by:
	\begin{equation*}
	(x_{n})_{n\in\N}:=\{1- b_{n}(2),b_{n}(2); n\in\N\}.
	\end{equation*}
	Remark that for every $n\in\N$, $x_n\in[2^{-a},1-2^{-a}]$; where, 
	$a:=\max\{a_n, n\in\N\} $. 
	
	Thus, by setting 
	\begin{equation*}
	\overline{B}^{\circ n}_{\eg,\ed}=\left(	\begin{array}{cc}
	d^{1,n} & \frac{\ell_{2}}{\ell_{1}}d^{3,n}\\\frac{\ell_{1}}{\ell_{2}}d^{2,n} & d^{4,n}
	\end{array}\right),
	\end{equation*}
	it follows that for every $i\in\{1,2,3,4\} $, there is $x_{k_{i,j}} $, $j=p_{n-2},\cdots, n-2$ such that:
	\begin{align*}
	d^{i,n}\leq  \sum_{j=p_{n-2}}^{n-2} x_{k_{i,1}}\cdots x_{k_{i,j}} \leq 1
	\end{align*}
	This proves the lemma.\end{proof}
\begin{prop}\label{bounded geometry Proposition} When $(\ell_1,\ell_2)\in [2, \infty)^2\setminus \{(2,2)\} $, then $\overline{A}^{\circ n}_{\eg,\ed} $ contracts the Euclidean metric provided $n$ is large enough. The scale of the contraction is bounded away from 1 independently of  $\ell_1$ and  $\ell_2$ and the particular sequence $b_n$, whereas the moment when the contraction starts depends on the upper bound of $b_n$.
\end{prop}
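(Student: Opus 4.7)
The plan is to analyze the product $\overline{A}^{\circ n}_{\ell_1,\ell_2}$ by first replacing it with the entry-wise bound $\overline{B}^{\circ n}_{\ell_1,\ell_2}$ from the preceding lemma, and then exploiting the rank-one structure of each $\overline{B}(n)$. Direct inspection of the formula for $\overline{B}$ gives the factorization
\[
\overline{B}(n) \;=\; u(n)\,v(n)^{\top}, \qquad u(n) := \bigl(1-b_n,\;\ell_2/\ell_1\bigr)^{\top},\quad v(n) := \bigl(1-b_{n-1},\;(\ell_1/\ell_2)\,b_{n-2}\bigr)^{\top},
\]
so that $\det \overline{B}(n) = 0$ and the unique nonzero eigenvalue is $\langle v(n), u(n)\rangle$. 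This reduction is the main structural observation; everything else reduces to scalar arithmetic.

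Using the identity $(uv^{\top})(u'{v'}^{\top}) = \langle v, u'\rangle\, u\, {v'}^{\top}$ iteratively, the product telescopes into
\[
\overline{B}(n)\overline{B}(n-2)\cdots\overline{B}(4) \;=\; C_n\, u(n)\,v(4)^{\top}, \qquad C_n := \prod_{k} \langle v(k), u(k-2)\rangle,
\]
and an elementary calculation yields $\langle v(k), u(k-2)\rangle = (1-b_{k-1})(1-b_{k-2}) + b_{k-2} = 1 - b_{k-1}(1-b_{k-2})$, which lies strictly in $(0,1)$. Under the standing assumptions $1 < \ell_1, \ell_2 < C$ and bounded type ($a_n \leq a$), the sequence $b_n$ ranges in a compact subinterval of $(0,1)$, so every factor is bounded by some $\theta < 1$; hence $C_n \leq \theta^{(n-4)/2}$ decays geometrically. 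Because $\|u(n)\|_2$ and $\|v(4)\|_2$ are controlled by $(\ell_1,\ell_2)$ alone, one gets $\|\overline{B}^{\circ n}\|_{op} \leq K(\ell_1,\ell_2)\,\theta^{(n-4)/2}$; the entry-wise domination of non-negative matrices then transfers the same bound to $\overline{A}^{\circ n}$, so $\|\overline{A}^{\circ n}\|_{op} < 1$ as soon as $n$ exceeds a threshold $n_0$ depending on $K$, i.e.\ on the upper bound of $b_n$.

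The main obstacle is the exclusion of $(\ell_1,\ell_2)=(2,2)$, and indeed the contraction genuinely fails there. At this point the inequality $t_i(j) = (1-b_n)/(\ell_i-1) \leq 1-b_n$ used in the preceding lemma becomes equality, so $\overline{A}(n) = \overline{B}(n)$; a direct computation then shows that $\lambda_{\max}(\overline{A}(n)) = 1$ whenever $b_n = b_{n-2}$ (in particular for any constant-type rotation number), ruling out contraction. Away from $(2,2)$ at least one $\ell_i > 2$ produces a strict margin $1/(\ell_i-1) < 1$ in the passage $\overline{A}(n) \leq \overline{B}(n)$; combined with the compactness of $[2,C]^2$ away from $(2,2)$, this margin gives the contraction rate $\theta$ bounded away from $1$ uniformly in $(\ell_1,\ell_2)$ and in the particular sequence $b_n$, exactly as the statement claims, while the onset $n_0$ still depends on the upper bound of $b_n$ through the constant $K$.
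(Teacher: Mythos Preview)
Your rank-one telescoping idea is attractive, but the argument rests on the entry-wise domination $\overline{A}_{\ell_1,\ell_2}(n)\le \overline{B}_{\ell_1,\ell_2}(n)$, and this fails in the $(1,1)$ entry. Computing the product $A(n)A(n-1)$ directly gives
\[
\overline{A}(2n)_{1,1}\;=\;t_1(a_{2n-1})\,t_2(a_{2n})\;+\;\ell_1^{-a_{2n-1}},
\]
so even after the replacement $t_i(j)\le 1-b$ you obtain $(1-b_{2n})(1-b_{2n-1})+b_{2n-1}$, which is \emph{strictly larger} than the $(1,1)$ entry $(1-b_{2n})(1-b_{2n-1})$ of your rank-one $\overline{B}(n)=u(n)v(n)^{\top}$. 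Concretely, at $\ell_1=\ell_2=2$ with $a_n\equiv 1$ one has $\overline{A}_{1,1}=3/4$ while $\overline{B}_{1,1}=1/4$. Hence $\overline{A}^{\circ n}\le \overline{B}^{\circ n}$ is not available, and the contraction you prove for $\overline{B}^{\circ n}$ does not transfer back to $\overline{A}^{\circ n}$. The same computation shows $\det\overline{A}(n)=b_{n-1}b_{n-2}\neq 0$, so your assertion that $\overline{A}(n)=\overline{B}(n)$ at $(2,2)$ is also incorrect; $\overline{A}$ is never rank one there. (Your observation that $\lambda_{\max}(\overline{A}(n))=1$ when $b_n=b_{n-2}$ at $(2,2)$ is right, but it does not follow from the rank-one picture.)

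The paper's route is genuinely different and avoids this trap: it does not rely on any rank-one factorisation. Instead it writes the entries of $\overline{A}^{\circ n}$ as polynomials in $z_i=1/(\ell_i-1)$, uses the uniform bound of the preceding lemma only to conclude that the tails $\sum_{j\ge k}d^{i,n}_j z_1^{\cdots}z_2^{\cdots}$ vanish as $k\to\infty$, and then argues inductively (via $\overline{A}^{\circ n}=\overline{A}(n)\overline{A}^{\circ(n-1)}$) that the low-order coefficients $d^{i,n}_j$ themselves decay exponentially in $n$. If you want to rescue a matrix-product argument, you must work with a majorant that keeps the additive $b_{n-1}$ in the $(1,1)$ slot; such a matrix is no longer rank one, so the telescoping identity is lost and a different mechanism (e.g.\ a suitable cone contraction or a Lyapunov function for the product of the genuine $2\times2$ companion-type factors $A(n)$) is needed.
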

\begin{proof} For fixed $n\in\N$, putting
	\begin{equation*}
	\overline{A}^{\circ n}_{\eg,\ed}=\left(	\begin{array}{cc}
	d^{1,n}(z_1,z_2) &  \frac{\ell_{1}}{\ell_{2}}d^{3,n}(z_1,z_2)\\ \frac{\ell_{2}}{\ell_{1}}d^{2,n}(z_1,z_2) & d^{4,n}(z_1,z_2)
	\end{array}\right);
	\end{equation*}
	with $z_1=1/(\ell_1-1)$ and $z_2=1/(\ell_2-1)$ Thus, $d^{i,n}(z_1,z_2)$,  $i=1,2,3,4 $ are   polynomials of respective degree $n-2$, $n-3$, $n-3$ and  $n-4$; whose  coefficients belong to the interval $[\ell^{-a}, 1-\ell^{-a}] $; with, $\ell=\max\{\ell_1,\ell_2\} $. For fixed $i\in\{1,2,3,4\} $, we denote  by $d^{i,n}_j$ the coefficients  of $d^{i,n}(z_1,z_2)$. Let us put:
	\begin{equation*}
	\begin{cases}
	\begin{array}{ccccc}
	d_1(2)=1 &\mbox{ and }  & d_1(i)=0;\quad i=1,3,4\\
	d_2(3)=1 & \mbox{ and }   & d_2(i)=0;\quad i=1,2,4.
	\end{array}
	\end{cases}
	\end{equation*}
	Then by the \textbf{Lemma~\ref{relatively compact}}, the sums
	\begin{equation*}
	\sum\limits_{j=0}^{p_{n-2}}d^{i,j}z_1^{j+d_1(i)}z_2^{j+d_2(i)}; \quad i=1,2,3,4
	\end{equation*}
	are uniformly bounded. Therefore, for every $i\in\{1,2,3,4\} $
	\begin{equation*}
	\sum\limits_{j=k}^{\infty}d_j^{i,n}z_1^{j+d_1(i)}z_2^{j+d_2(i)}\longrightarrow 0,\quad\mbox{ when } k\longrightarrow\infty
	\end{equation*}
	\begin{lem}
		For every $i\in\{1,2,3,4\} $, the sequence $ (d_j^{i,n})$ tends to zero at least exponentially.
	\end{lem}
	\begin{proof}
		By a simple calculation, we have 
		\begin{equation*}
		d^{1,n}(0,0)=\prod_{i=1}^{p_{n-2}}b_{n-2i+1},\; d^{4,n}(0,0)=\prod_{i=1}^{p_{n-2}}b_{n-2i}, \; 
		d^{2,n}(0,0)=d^{3,n}(0,0)=0.
		\end{equation*}
		Now, suppose that, for given $0<n-1$ and $0<j<n-1$  all the coefficients $d_j^{i,n-1} $  $i\in\{1,2,3,4\} $, tend to zero at least exponentially fast. Then, since 
		\begin{equation*}
		\overline{A}^{\circ n}_{\eg,\ed}=\overline{A}_{\eg,\ed}(n)\overline{A}^{\circ {n-1}}_{\eg,\ed}
		\end{equation*}
		and by the form of the coefficients of $\overline{A}_{\eg,\ed}(n) $, the Lemma is proved and therefore, the \textbf{Proposition~\ref{bounded geometry Proposition}}.
	\end{proof}
\end{proof}

\subsubsection{Particular case of Bounded Geometry}
\begin{prop}
	Let $f\in\mathscr{L} $ with critical exponents $(\ell_1,\ell_2)$ and rotation number $\rho(f)=[abab\cdots]$; for some $a,b\in\R$. If the inequality
	\begin{equation*}\label{limit domain bounded geometry part}
	\begin{array}{r}
	\sqrt{(\ell_1^{-b}-\ell_2^{-a})^2+(t_1(b)t_2(a)+2(\ell_1^{-b}+\ell_2^{-a}))t_1(b)t_2(a)}+\\+t_1(b)t_2(a)+\ell_1^{-b}+\ell_2^{-a}-2 <0;
	\end{array}
	\end{equation*}	
	holds, then the geometry of $f$ is bounded.
\end{prop}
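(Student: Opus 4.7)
The plan is to reduce the analysis of the recursive inequalities \textbf{(\ref{nu 1})}--\textbf{(\ref{nu 2})} to iterating a single constant matrix, identify the hypothesis in the statement with that matrix having spectral radius strictly less than $1$, and conclude boundedness of $\nu_n=-\ln\alpha_n$.

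First I exploit $\rho(f)=[abab\cdots]$: the continued fraction coefficients are $2$-periodic, so one of $\{a_{2n}\}$, $\{a_{2n-1}\}$ is constantly $a$ and the other constantly $b$. Substituted into the definitions of $A_{\ell_1,\ell_2}(2n)$ and $A_{\ell_1,\ell_2}(2n+1)$, both matrices become independent of $n$, so the composite $\overline{A}_{\ell_1,\ell_2}(n)=A_{\ell_1,\ell_2}(n)A_{\ell_1,\ell_2}(n-1)$ reduces to a single constant matrix $\overline{A}$. Consequently the iteration takes the clean form $v_{2n}\leq\overline{A}\,v_{2n-2}+\overline{\kappa}'$, with an analogous formula on odd indices.

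A direct multiplication yields
\begin{equation*}
T:=\mathrm{tr}(\overline{A})=t_1(b)t_2(a)+\ell_1^{-b}+\ell_2^{-a},\qquad D:=\det(\overline{A})=\ell_1^{-b}\ell_2^{-a},
\end{equation*}
so the eigenvalues of $\overline{A}$ are $\lambda_{s,u}=\tfrac{1}{2}\bigl(T\pm\sqrt{T^2-4D}\bigr)$. Expanding gives
\begin{equation*}
T^2-4D=(\ell_1^{-b}-\ell_2^{-a})^2+t_1(b)t_2(a)\bigl(t_1(b)t_2(a)+2(\ell_1^{-b}+\ell_2^{-a})\bigr),
\end{equation*}
which is exactly the radicand appearing in the statement. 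The hypothesis therefore reads $\sqrt{T^2-4D}+T-2<0$, i.e., $\lambda_u<1$; this forces $T<2$ automatically (the radical is nonnegative), and then $0\leq\lambda_s\leq\lambda_u<1$ since $\overline{A}$ has nonnegative entries and nonnegative determinant.

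Finally, $\overline{A}^{n}\to 0$ geometrically, so $(I-\overline{A})^{-1}$ exists with nonnegative entries; iterating the componentwise affine inequality $v_{2n}\leq\overline{A}\,v_{2n-2}+\overline{\kappa}'$ (whose direction is preserved because $\overline{A}$ has nonnegative entries) yields a uniform upper bound on $v_n$, hence on $\nu_n=-\ln\alpha_n$, which is precisely the bounded geometry property $\alpha_n\geq c>0$. The main obstacle is the algebraic identification of the radicand with $T^2-4D$, together with a careful matching of the two interlocked recursions on even and odd indices so that the correct product forms $\overline{A}$; once this is accomplished the rest is standard Perron-type reasoning on a $2\times 2$ nonnegative matrix of spectral radius less than one.
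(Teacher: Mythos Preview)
Your proposal is correct and follows essentially the same route as the paper: exploit the $2$-periodicity of the continued fraction to reduce the iteration to a single constant matrix $\overline{A}_{\ell_1,\ell_2}$, compute its trace and determinant, identify the hypothesis with $\lambda_u<1$, and conclude contraction. Your algebraic identification of the radicand with $T^2-4D$ matches the paper's explicit eigenvalue formulas exactly, and your final step (nonnegative entries preserve the componentwise inequality, spectral radius $<1$ gives geometric decay of $\overline{A}^n$ and summability of the affine tail) is a slightly more careful justification of what the paper compresses into the single phrase ``contracts the Euclidean metric''.
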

\begin{proof}
	If $\rho(f)=[abab\cdots]$, then 
	\begin{equation*}
	\overline{A}_{\eg,\ed}=\left(	\begin{array}{cc}
	t_1(b) t_2(a)+\ell_1^{-b} & \dfrac{\ell_2 }{\ell_1}\ell_2^{-a}t_2(a) \\\dfrac{\ell_1}{\ell_2 }t_1(b) &\ell_2^{-a} 
	\end{array}\right) 
	\end{equation*}	
	and these eigenvalues $ \lambda_s$ and $ \lambda_u$ are defined as following
	\begin{equation*}
	\begin{array}{r}
	2\lambda_s=-\sqrt{(\ell_1^{-b}-\ell_2^{-a})^2+(t_1(b)t_2(a)+2(\ell_1^{-b}+\ell_2^{-a}))t_1(b)t_2(a)}+\\+t_1(b)t_2(a)+\ell_1^{-b}+\ell_2^{-a}
	\end{array}
	\end{equation*}
	and 
	\begin{equation*}
	\begin{array}{r}
	2\lambda_u=\sqrt{(\ell_1^{-b}-\ell_2^{-a})^2+(t_1(b)t_2(a)+2(\ell_1^{-b}+\ell_2^{-a}))t_1(b)t_2(a)}+\\+t_1(b)t_2(a)+\ell_1^{-b}+\ell_2^{-a};
	\end{array}
	\end{equation*}
	Observe that, $\lambda_s\in(0,1)$. Thus, if $\lambda_u<1$, then $\overline{A}_{\eg,\ed} $ contracts the Euclidean metric. 
	
	This proves the proposition.
\end{proof}
\subsection{Proof of Corollary}	
\begin{lem}\label{wi and anlpha n-1 are comparable}
	Let
	\begin{equation*}
	w_{n}(i)=\dfrac{|(\underline{-q_{n}+(i+1)q_{n-1}},\underline{-q_{n}+iq_{n-1}})|}{|\underline{-q_{n}+iq_{n-1}}|}
	\end{equation*}	 
	be a parameter sequence with, $i=0\cdots a_{n-1}-1$. 	 $w_{n}(i)$,  $i=1\cdots a_{n-1}-1$ and $w^{\ell_{1},\ell_{2} }_{n}(0)$ are comparable to $\alpha_{ n-1} $.
\end{lem}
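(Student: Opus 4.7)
The plan is to transport $w_n(i)$ via a well-chosen iterate of $f$ into a ratio between a preimage of $U$ at the scale $q_{n-1}$ and its adjacent gap, which by \textbf{Proposition~\ref{preimage and gaps adjacent}} is comparable to $\alpha_{n-1}$ (since $|[\underline{-q_{n-1}},\underline{0})|\asymp|\underline{-q_{n-1}}|$). Two regimes arise, distinguished by whether this iterate can be made a diffeomorphism on a two-sided neighborhood of the relevant closure; in the second regime a critical-exponent factor is forced.

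For $1\le i\le a_{n-1}-1$ I take $g_i:=f^{q_n-(i+1)q_{n-1}}$. Under $g_i$,
\[
\underline{-q_n+iq_{n-1}}\longmapsto\underline{-q_{n-1}},\qquad
(\underline{-q_n+(i+1)q_{n-1}},\underline{-q_n+iq_{n-1}})\longmapsto(\underline{-q_{n-1}},\underline{0}).
\]
Because $i\ge 1$, the combinatorics of the dynamical partition (\textbf{Proposition~\ref{partition of f}}) guarantee that each intermediate iterate of the closed union of these two intervals stays disjoint from $\overline{U}$, so $g_i$ is a diffeomorphism on a neighborhood of that closure (\textbf{Remark~\ref{diffeo on an interval}}). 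Extending by the adjacent preimage intervals on each side provides Koebe space uniformly bounded below by \textbf{Proposition~\ref{preimage and gaps adjacent}}, whence \textbf{Proposition~\ref{koebe principle}} yields
\[
w_n(i)\ \asymp\ \dfrac{|(\underline{-q_{n-1}},\underline{0})|}{|\underline{-q_{n-1}}|}\ \asymp\ \alpha_{n-1},
\]
the last step again by \textbf{Proposition~\ref{preimage and gaps adjacent}}.

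For $i=0$ the naive iterate $f^{q_n-q_{n-1}}$ sends the gap endpoint $\underline{-q_n+q_{n-1}}$ onto $\underline{0}=U$, so it cannot be extended into a two-sided diffeomorphism past that side. I factor $f^{q_n-q_{n-1}}=f\circ f^{q_n-q_{n-1}-1}$ and apply the first-case argument to the diffeomorphic intermediate iterate $f^{q_n-q_{n-1}-1}$, obtaining
\[
w_n(0)\ \asymp\ \dfrac{|(\underline{-1},\underline{-q_{n-1}-1})|}{|\underline{-q_{n-1}-1}|}.
\]
The final application of $f$ sends $\underline{-1}$ onto $\underline{0}=U$; the preimage $\underline{-1}$ abuts the endpoint of $U$ whose critical exponent is $\ell_1$ when $n$ is even and $\ell_2$ when $n$ is odd, and for $n$ large both intervals above lie in the one-sided critical neighborhood $I^{l_i}$ of \textbf{Fact~\ref{fact}} (by \textbf{Proposition~\ref{qn go to zero}}). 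Computing the effect of $f$ on each interval through part (1) of \textbf{Fact~\ref{fact}}, and using part (2) to control the residual derivative ratios inside this neighborhood, the $\ell_{1,2}$-th power of the preceding ratio is comparable to
\[
\dfrac{|(\underline{0},\underline{-q_{n-1}})|}{|\underline{-q_{n-1}}|}\ \asymp\ \alpha_{n-1},
\]
which gives $w_n^{\ell_1,\ell_2}(0)\asymp\alpha_{n-1}$.

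The hardest step is the case $i=0$: the careful bookkeeping of the critical scaling of \textbf{Fact~\ref{fact}} under the final application of $f$, to show that exactly the power $\ell_{1,2}$ (and not, for instance, $\ell_{1,2}-1$) appears on $w_n(0)$. The combinatorial verification that the relevant iterates miss $\overline{U}$, so that Koebe applies with uniform Koebe space, is routine from \textbf{Proposition~\ref{partition of f}} combined with \textbf{Proposition~\ref{preimage and gaps adjacent}}.
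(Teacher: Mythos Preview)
Your treatment of $i\ge 1$ coincides with the paper's: both transport the pair of intervals by $f^{\,q_n-(i+1)q_{n-1}}$, use adjacent preimage intervals as Koebe space, and land on a ratio of the form $|(\underline{-q_{n-1}},\underline{0})|/|\underline{-q_{n-1}}|$, which is comparable to $\alpha_{n-1}$ by \textbf{Proposition~\ref{preimage and gaps adjacent}}.

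The case $i=0$, however, has a genuine gap, and it lies precisely in the order of operations. You factor $f^{q_n-q_{n-1}}=f\circ f^{q_n-q_{n-1}-1}$ and run Koebe on the long block \emph{first}, then apply the single $f$. The paper does the opposite: it applies $f$ first, obtaining
\[
w_n^{\ell_1,\ell_2}(0)\ \asymp\ \dfrac{|(\underline{-q_n+q_{n-1}+1},\underline{-q_n+1})|}{|\underline{-q_n+1}|},
\]
and only then invokes Koebe on $f^{q_n-q_{n-1}-1}$ with $T=[\underline{-q_n+q_{n-1}+1},\underline{-q_n-q_{n-1}+1}]$. The order matters because the power $\ell_{1,2}$ is produced by $f$ only when the two intervals lie at distance tending to $0$ from an endpoint of $U$; this holds for $\underline{-q_n}$ and its neighbouring gap since $|(\underline{-q_n},\underline{0})|\to 0$. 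In your ordering, after the Koebe step the relevant intervals $(\underline{-1},\underline{-q_{n-1}-1})$ and $\underline{-q_{n-1}-1}$ sit on the side of $\underline{-1}$ \emph{away} from $U$ (since $f$ sends that side to the gap $(\underline{0},\underline{-q_{n-1}})$ while the other side of $\underline{-1}$ carries the gap $(\underline{0},\underline{-1})$). They are therefore separated from the nearest endpoint of $U$ by the fixed-length block $(\underline{0},\underline{-1})\cup\underline{-1}$; on that region $f$ has derivative bounded between two positive constants, so the last $f$ preserves the ratio up to a bounded factor and no power $\ell_{1,2}$ appears. A second difficulty is your Koebe space on the $\underline{-q_n}$ side: there is no preimage of the same depth between $\underline{-q_n}$ and $U$, only the gap $(\underline{-q_n},\underline{0})$, so the phrase ``adjacent preimage intervals on each side'' does not apply as in the $i\ge1$ case. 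Both issues vanish once $f$ is applied first, which moves the configuration away from $\overline{U}$ and makes the preimage $\underline{-q_n-q_{n-1}+1}$ available as side space.
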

\begin{proof}
	Suppose that $a_{n-1} >1$ and let $i=1, \cdots, a_{n-1} -1$. We apply the \textbf{Proposition~\ref{koebe principle}} to
	
	\begin{enumerate}
		\item[-] $T=[ \underline{-q_{n}+(i+1)q_{n-1}},\underline{-q_{n}+(i-1)q_{n-1}}]$,
		\item[-] $M=(\underline{-q_{n}+(i+1)q_{n-1}},\underline{-q_{n}+(i-1)q_{n-1}})$,
		\item[-] $S=\underline{-q_{n}+(i+1)q_{n-1}}$, 
		\item[-] $D=\underline{-q_{n}+(i-1)q_{n-1}}$,
		\item[-] $f^{q_{n}-(i-1)q_{n-1}} $.
	\end{enumerate}
	\begin{enumerate}
		\item For every $j<q_{n}-(i-1)q_{n-1} $, $f^j(T)\cap\overline{U}=\emptyset $; so $f^{q_{n}-(i-1)q_{n-1}} $ is diffeomorphism on $T$ (\textbf{Remark~\ref{diffeo on an interval}});
		\item the set $\bigcup_{j=0}^{q_{n}-(i-1)q_{n-1}} (T) $ covers the circle at most two times;
		\item for $n$ large enough, and by the \textbf{Proposition~\ref{preimage and gaps adjacent}}, we have
		\begin{equation*}
		\dfrac{f^{q_{n}-(i-1)q_{n-1}}(M)}{f^{q_{n}-(i-1)q_{n-1}}(S)}  <
		\dfrac{f^{q_{n}-(i-1)q_{n-1}}(M)}{f^{q_{n}-(i-1)q_{n-1}}(D)}
		=	\dfrac{|( \underline{0}, \underline{-2q_{n-1}})|}{|\underline{-2q_{n-1}}|}
		<K.
		\end{equation*}
		Therefore, it follows from the  \textbf{Proposition~\ref{koebe principle}} and \textbf{Proposition~\ref{preimage and gaps adjacent}} $\mbox{(} |\underline{-q_{n-1}}| \mbox{ and } |( \underline{0}, \underline{-q_{n-1}}]| $ are comparable) that $w_{n}(i)$ and $\alpha_{n-1} $ are comparable.
	\end{enumerate}
	For $i=0$ (which is the only case when $a=1$\mbox{)}, we apply the 	\textbf{Proposition~\ref{koebe principle}} to
	\begin{enumerate}
		\item[-] $T=[ \underline{-q_{n}+q_{n-1}+1},\underline{-q_{n}-q_{n-1}+1}]$,
		\item[-] $M=(\underline{-q_{n}+q_{n-1}+1},\underline{-q_{n}-q_{n-1}+1})$,
		\item[-] $S=\underline{-q_{n}+q_{n-1}+1}$, 
		\item[-] $D=\underline{-q_{n}-q_{n-1}+1}$,
		\item[-] $f^{q_{n}-q_{n-1}-1} $.
	\end{enumerate}
	As before, the hypotheses are satisfied. And for $n$ large enough, 	
	\begin{equation*}
	w^{\ell_{1},\ell_{2}}_{n}(0)=\dfrac{|(\underline{-q_{n}+q_{n-1}+1},\underline{-q_{n}+1})|}{|\underline{-q_{n}+1}|};
	\end{equation*}		
	which is also uniformly comparable to $\alpha_{n-1} $. 
	
	This concludes the proof.
\end{proof}	
The rest of the proof of \textbf{Corollary} is as in \cite{8} (Theorem 1.4 and Theorem 1.5).

\end{document}